\title{On the Zeros of Epstein Zeta Functions \footnote{This work was supported by the Korea Research Foundation Grant funded by the Korean Government(MOEHRD, Basic Research Promotion Fund)(KRF-2008-313-C00009).     }}
\begin{document}
\newcommand\tabcaption{\def\@captype{table}\caption}
\newtheorem{thrm}{Theorem}[section]
\newtheorem{defn}[thrm]{Definition}
\newtheorem{co}[thrm]{Corollary}
\newtheorem{lemma}[thrm]{Lemma}
\newtheorem{prop}[thrm]{Proposition}
\newtheorem*{claim}{Claim}
\newtheorem*{con}{Conjecture}
\newtheorem*{ack}{Acknowledgment}
\newtheorem*{key}{Key Words}
\newtheorem*{msc}{Mathematics Subject Classification (2010)}

\numberwithin{equation}{section}

\def\bth{{\boldsymbol \theta}}
\def\bla{{\boldsymbol \lambda}}

\def\blog{{\mathbf {Log} ~} }

\def\x{{\mathbf x}}
\def\y{{\mathbf y}}

\def\B{{\mathbf B}}
\def\F{{\mathbf F}}
\def\L{{\mathbf L}}
\def\M{{\mathbf M}}

\def\m{{\mathbf m}}
\def\r{{\mathbf r}}
\def\z{{\mathbf z}}
\def\w{{\mathbf w}}

\def\n{{\mathfrak n}}
\def\p{{\mathfrak p}}
\def\N{{\mathfrak N}}

\def\R{\text{\rm{Re}}\,}
\def\I{\text{\rm{Im}}\,}
\def\s{\text{\rm sign}}
\def\bysame{\leavevmode\hbox to3em{\hrulefill}\thinspace}

\author{Yoonbok Lee\\
\\ Department of Mathematics, University of Rochester \\ {\it lee@math.rochester.edu }}

\parskip=12pt

\maketitle %\thispagestyle{empty}

\begin{abstract}
We investigate the zeros of Epstein zeta functions associated with
a positive definite quadratic form with rational coefficients.
Davenport and Heilbronn,  and also Voronin,  proved the
existence of zeros of Epstein zeta functions off the critical
line when the class number of the quadratic form is bigger than $1$.
These authors give lower bounds for the number of zeros in strips that are
of the same order as the more easily proved upper bounds.
In this paper, we improve their results by providing asymptotic formulas for the number of zeros.
\end{abstract}

\begin{key}
  Epstein zeta functions, Hecke $L$-functions, zeros, mean motions.
\end{key}

\begin{msc}
 11M26, 11M41.
\end{msc}

\pagestyle{myheadings}

\section{Introduction}
Let $Q(x,y) = ax^2 +bxy +cy^2$ be a positive definite quadratic
form with a fundamental discriminant $d$, where $a,b,c \in
\Bbb{Z}$ and $d|D = b^2 -4ac <0$. The Epstein zeta function
associated with $Q$ is defined by
$$E(s, Q)= \sum {}^{'} Q(m,n)^{-s} \qquad(\R s>1),$$
where the sum is over all integers $m,n $ not both zero. It has
an analytic  continuation to the whole complex plane, except for the point
$s=1$,  and it satisfies the functional equation
\begin{equation*}
\left( \frac{\sqrt{-D}}{2 \pi} \right)^s \Gamma(s) E(s, Q) =\left(
\frac{\sqrt{-D}}{2 \pi} \right)^{1-s} \Gamma(1-s) E(1-s, Q).
\end{equation*}

It is well known that the non equivalent quadratic forms of
discriminant $d$ correspond one-to-one to the classes of ideals in
the quadratic field $\mathbb{Q}(\sqrt{D})$. The number of
representations of $n$ by a quadratic form is the number of
integral ideals of norm $n$ in the corresponding ideal class, times
the number $w$ of roots of unity in $\mathbb{Q}(\sqrt{D})$. Thus,
it follows that
\begin{equation*}
E(s, Q) = \frac{w}{h(D)} \sum_{\chi} \bar{\chi}(\mathfrak{a}_Q )
L(s, \chi).
\end{equation*}
Here $h(D)$ is the class number, $\mathfrak{a}_Q$ is any integral
ideal in the ideal class corresponding to the equivalence class of
$Q$, $\sum_\chi$ is a sum over all characters of the class group, and
$L(s, \chi)$ is the Hecke $L$-function defined by
$$L(s, \chi) =
\sum_{\n} \frac{\chi(\n)}{ \N (\n)^s} =
\prod_{\p} \left( 1- \frac{ \chi(
\p)}{\N( \p)^s}\right)^{-1}\qquad(\R s>1),
$$
where $\N$ is the norm. Let $P_0$ be the set of primes $p$ which
are squares of prime divisors $\p$ of
$\mathbb{Q}(\sqrt{D})$, that is,  $p= \p^2$. We note that
$\chi(\p)= \pm 1$ and $p | D$ in this case. Let $P_1$ be
the set of primes $p$ which remain prime in the
ring of integers of $\mathbb{Q}(\sqrt{D})$, that is,   $p=\p$. Also, let $P_2$ be the set
of primes $p$ which split completely in $\mathbb{Q}(\sqrt{D})$ so that
$p=\p \p'$, say. Then, we have
\begin{equation*}
L(s, \chi) = \prod_{p \in P_0 } \left( 1- \frac{
\chi(\p)}{p^s}\right)^{-1} \prod_{p \in P_1} \left( 1-
\frac{1}{p^{2s}}\right)^{-1} \prod_{p\in P_2} \left( 1- \frac{2 \R
\chi (\p)}{p^s}+\frac{1}{p^{2s}} \right)^{-1}.
\end{equation*}
Since $L(s, \chi) = L(s, \bar{\chi})$, we can write
\begin{equation*}
E(s, Q) = \sum_{j=1} ^J  a_j L(s, \chi_j)
\end{equation*}
in which $\chi_j \neq \chi_k$ and $ \chi_j \neq \bar{\chi}_k$
for $j\neq k$,  and the coefficients $a_j $ satisfy $a_j = w h(D)^{-1}\chi_j (\mathfrak{a}_Q )$
for real characters $\chi_j$, and $a_j = 2w h(D)^{-1}  \R \chi_j
(\mathfrak{a}_Q )$ for complex characters $\chi_j$.
Here $J$ is the number of real characters plus one-half   the number of complex characters
so that, in particular, $J=1$ if and only if $h(D)=1$.
%Since there are $h(D)$ characters of the class group, the Epstein zeta
%function has an Euler product when $h(D)=1$.

When $h(D)=1$, $E(s, Q)$  has an Euler product  and  is expected to
satisfy the analogue of the Riemann hypothesis. In contrast, when
$h(D)>1$, we expect $E(s, Q)$  to behave quite
differently.
% [We briefly introduce the behavior of zeros of the
%Epstein zeta function with $h(D)>1$ on and off the critical line.]
For example, Bombieri and Hejhal \cite{BH} studied the zero distribution of
linear combinations of the form
$F(s) = \sum_{j=1}^J b_j e^{i\alpha_j} L_j(s)$,
where the $L$-functions all satisfy the same functional equation, the
$\alpha_j$'s are certain real numbers related to the functional equation, and the
 $b_j$'s are arbitrary real coefficients. The Epstein zeta functions
 we are considering are of this form. Assuming the generalized Riemann
Hypothesis for $L_j (s)$ and a weak hypothesis about the spacing of
the zeros of $L_j(s)$, they proved that almost all the zeros of $F(s)$ are simple
and on the critical line $\R s=1/2$.   In addition, Hejhal \cite{H2} announced that
except for a set of $b_j$'s with small measure,
the number of zeros of $F(s)$ in $\R s \geqslant \sigma$, $T
\leqslant \I s \leqslant T+H$ is of order
$$\frac{ H}{(\sigma- \frac{1}{2}) \sqrt{\log \log T}}
$$
for
$$ \frac12 +  \frac{( \log \log T)^\kappa }{(\log T) } \leqslant
\sigma \leqslant \frac12 + \frac{1}{(\log T)^{\delta}},
$$
and $c_1 T^w
\leqslant H \leqslant c_2 T$, where $\kappa >2$, $0<\delta<1$, and $1/2<w\leqslant 1$.
This generalized his earlier result \cite{H1} for the case $J=2$ (see also Selberg~\cite{Se}).
A little further to the right of the line $\sigma=1/2$ we have the following result of
Voronin~\cite{V} (or see Chapter 7 of \cite{KV}).
\begin{thrm}[Voronin]\label{vo:1}
Let $D$ be a negative integer. Suppose that the class number of the field
$\mathbb{Q}(\sqrt{D})$ is greater than $1$ and that $Q$ is a quadratic form with
integer coefficients whose discriminant is equal to the
discriminant of $\mathbb{Q}(\sqrt{D})$. Then for any $\sigma_1 $
and $\sigma_2 $ with $1/2< \sigma _1 < \sigma_2 <1$ and
for $T$ sufficiently large, the region $\sigma_1 < \R s < \sigma_2
$, $|\I s |<T$ contains at least $cT$ zeros of $E(s, Q)$, where
$c=c(\sigma_1 , \sigma_2 , Q)>0$ does not depend on $T$.
\end{thrm}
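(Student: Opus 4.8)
The plan is to deduce Theorem~\ref{vo:1} from the joint universality of the Hecke $L$-functions $L(s,\chi_j)$, combined with Rouch\'e's theorem. Throughout we use the decomposition $E(s,Q)=\sum_{j=1}^{J}a_jL(s,\chi_j)$, in which the $\chi_j$ are pairwise distinct and pairwise non-conjugate. A preliminary observation is that, because $h(D)>1$, at least two of the coefficients $a_j$ are non-zero: if only one were, $E(s,Q)$ would be a constant multiple of a single $L(s,\chi_{j_0})$ and would therefore have an Euler product, whereas $E(s,Q)$ equals $w$ times the partial zeta function of a single ideal class of $\mathbb{Q}(\sqrt{D})$, which has no Euler product once $h(D)>1$. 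After relabelling we may assume $a_1\neq0$ and $a_2\neq0$.

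Next I would fix a closed disk $\overline K$ of radius $r$ centered at $\sigma^{*}=(\sigma_1+\sigma_2)/2$, with $0<r<(\sigma_2-\sigma_1)/2$, so that $\overline K$ has connected complement and lies strictly inside both the strip $\sigma_1<\R s<\sigma_2$ and the region $1/2<\R s<1$. The key step is to choose a tuple $(f_1,\dots,f_J)$ of functions, each analytic and zero-free on $\overline K$, such that $g(s):=\sum_{j=1}^{J}a_jf_j(s)$ has exactly one zero in $K$ while $m_0:=\min_{s\in\partial K}|g(s)|>0$; this is precisely where the two non-zero coefficients are used. Concretely, put $f_j\equiv1$ for $j\geq3$; choose a non-zero real constant $f_2$ so that $A:=a_2f_2+\sum_{j\geq3}a_j\neq0$; and set $f_1(s):=v+c(s-\sigma^{*})$ with $v:=-A/a_1\neq0$ and $c\neq0$ small enough that $|v/c|>r$. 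Then $f_1$ is zero-free on $\overline K$, $g(s)=a_1c(s-\sigma^{*})$ vanishes only at the interior point $\sigma^{*}$, and $m_0=|a_1c|\,r>0$.

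The analytic heart of the argument is the joint universality theorem for $L(s,\chi_1),\dots,L(s,\chi_J)$ in the strip $1/2<\R s<1$, which is the mechanism behind Voronin's proof (cf.\ Chapter~7 of~\cite{KV}). It yields $\varepsilon_0>0$ and $\delta>0$ such that, for all sufficiently large $T$, the set $\mathcal T(T)$ of those $\tau\in[0,T]$ satisfying $\max_{1\leq j\leq J}\max_{s\in\overline K}|L(s+i\tau,\chi_j)-f_j(s)|<\varepsilon_0$ has Lebesgue measure at least $\delta T$. Taking $\varepsilon_0$ so small that $\varepsilon_0\sum_{j}|a_j|<m_0$, we get for every $\tau\in\mathcal T(T)$ that $|E(s+i\tau,Q)-g(s)|<|g(s)|$ for $s\in\partial K$, so Rouch\'e's theorem produces a zero $\rho_\tau$ of $E(\cdot,Q)$ with $\rho_\tau-i\tau\in K$; in particular $\sigma_1<\R\rho_\tau<\sigma_2$ and $|\I\rho_\tau-\tau|\leq r$.

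To convert this into a count of distinct zeros, observe that $\mathcal T(T)\cap[r,T-r]$ still has measure $\gg T$, so among the $\asymp T/(2r+1)$ consecutive intervals of length $2r+1$ that partition $[r,T-r]$ at least $cT$ of them meet $\mathcal T(T)$; choosing one $\tau$ in each yields values pairwise more than $2r$ apart, hence disks $K+i\tau$ that are pairwise disjoint and zeros $\rho_\tau$ that are pairwise distinct. This exhibits $\geq cT$ zeros of $E(s,Q)$ in the region $\sigma_1<\R s<\sigma_2$, $0\leq\I s\leq T$, and, since $E(\bar s,Q)=\overline{E(s,Q)}$, as many again with $-T\leq\I s<0$; hence $|\I s|<T$ contains $\gg T$ zeros, with the implied constant depending only on $\sigma_1,\sigma_2,Q$. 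This is Theorem~\ref{vo:1}. The main obstacle in the plan is the joint universality statement itself: proved from first principles it demands a mean-square approximation of each $L(s,\chi_j)$ by truncated Euler products inside the strip, a rearrangement theorem for conditionally convergent series in Hilbert space to hit the prescribed values, and an ergodic (limiting-distribution) step to upgrade ``for some $\tau$'' to ``for a set of $\tau$ of positive density''. Since Theorem~\ref{vo:1} is attributed to Voronin, who developed exactly this circle of ideas, it is legitimate to invoke it here.
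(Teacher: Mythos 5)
The paper does not prove Theorem~\ref{vo:1}; it is quoted as Voronin's result with a pointer to \cite{V} and Chapter~7 of \cite{KV}. Your sketch --- joint universality of the $L(s,\chi_j)$ on a disk in $1/2<\R s<1$, a zero-free target tuple $(f_1,\dots,f_J)$ arranged so that $\sum_j a_jf_j$ has a single simple zero, Rouch\'e's theorem, and a positive-density separation of the admissible shifts $\tau$ --- is exactly the argument of those references and is correct; your preliminary observation that at least two $a_j\neq0$ (since $a_1=w/h(D)\neq0$ always, and if only $a_1$ were nonzero then $E(s,Q)$ would inherit an Euler product, impossible for a partial zeta function of a nontrivial ideal class) is the right justification for being able to build the target tuple. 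For contrast, the paper's own machinery (the Borchsenius--Jessen Jensen-function argument of \S3) supplies a universality-free proof that $c>0$ only in the case $J=2$; when $J\geq3$ the paper itself invokes Voronin's theorem as a black box, exactly as you do.
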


Earlier, Davenport and Heilbronn \cite{DH} had  shown that when $h(D)>1$, $E(s, Q)$
 has infinitely many zeros in $\R s>1$.
Recently, Bombieri and Mueller \cite{BM} investigated the  zeros of some specific Epstein zeta functions.
Define $ \sigma(Q) = \sup \{ \R \rho : E(\rho , Q)=0\} $ for a quadratic form $Q$ and let $Q_1 (m,n) = m^2 + 5n^2 $ and $Q_2 (m,n) = 2m^2 + 2mn + 3n^2$. Bombieri and Mueller evaluate $\sigma(Q_1 )$ and $\sigma(Q_2)$ numerically and investigate the zeros of $E(s, Q_1)$ near the line $\R s = \sigma (Q_1)$. They also prove the following theorem.

\begin{thrm}[Bombieri and Mueller]\label{bmt:1}
Let $1 < \sigma_1 < \sigma_2 < \sigma(Q_i)$. Then the number of
zeros of $E(s, Q_i)$ in $ \sigma_1 < \R s < \sigma_2 $, $ 0< \I s
<T$ has exact order $T$.
\end{thrm}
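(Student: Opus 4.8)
The plan is to work entirely in the half-plane $\R s>1$, where $E(s,Q_i)$ is an absolutely convergent Dirichlet series and hence a uniformly almost periodic function of $\I s$ on every strip $\sigma_1\le\R s\le\sigma_2$ with $\sigma_1>1$; this puts the problem inside Bohr's theory and the Jessen--Tornehave theory of mean motions and zeros of almost periodic functions. Writing $E(s,Q_i)=\tfrac{w}{2}\big(L(s,\chi_0)\pm L(s,\chi_1)\big)$ with $\chi_0$ the principal and $\chi_1$ the nontrivial character of the class group of $\mathbb{Q}(\sqrt{D})$ (which has order $2$ for $D=-20$), and using that $L(s,\chi_0)$ and $L(s,\chi_1)$ are given by their Euler products and hence do not vanish for $\R s>1$, the zeros of $E(s,Q_i)$ in $\R s>1$ are precisely the solutions of $g(s)=\mp 1$, where
$$
g(s):=\frac{L(s,\chi_1)}{L(s,\chi_0)}=\exp\!\Big(-4\sum_{p\in\mathcal P}p^{-s}+R(s)\Big),
$$
$\mathcal P$ being a set of primes of positive density and $R(s)$ holomorphic and bounded for $\R s>\tfrac12$. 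In particular $g$ is almost periodic and zero-free on $\sigma_1\le\R s\le\sigma_2$, so $\log|g(\sigma+it)|$ and $\arg g(\sigma+it)$ are genuine almost periodic functions of $t$.

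The upper bound $\ll T$ is routine: $|E(s,Q_i)|$ is bounded on a slightly wider strip, while almost periodicity (or the finiteness of the mean $\mathcal M(\sigma):=\lim_{T}\tfrac1{2T}\int_{-T}^{T}\log|E(\sigma+it,Q_i)|\,dt$) furnishes in each unit-height interval a point at which $|E|$ exceeds a fixed positive constant, so Jensen's formula on discs of a fixed radius gives $O(1)$ zeros per unit height.

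For the lower bound it suffices to produce, for each admissible pair $\sigma_1<\sigma_2$, a \emph{single} zero $\rho_0$ of $E(s,Q_i)$ with $\sigma_1<\R\rho_0<\sigma_2$: choosing $r>0$ with $\{|s-\rho_0|\le r\}\subset\{\sigma_1<\R s<\sigma_2\}$ and $m=\min_{|s-\rho_0|=r}|E(s,Q_i)|>0$, each $\varepsilon$-almost period $\tau$ of $E(\cdot,Q_i)$ with $\varepsilon<m$ gives, by Rouch\'e's theorem, a zero within $r$ of $\rho_0+i\tau$, and such $\tau$ are relatively dense, producing $\gg T$ zeros with $0<\I s<T$. (Equivalently $\mathcal M$ is then not affine on $[\sigma_1,\sigma_2]$, and the Jessen--Tornehave formula gives the count $\sim\tfrac1{2\pi}(\mathcal M'(\sigma_2)-\mathcal M'(\sigma_1))\,T$ directly.) So everything reduces to the \emph{no-gap statement}: the real parts of the zeros of $E(s,Q_i)$ in $\R s>1$ are dense in $(1,\sigma(Q_i))$.

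This no-gap statement is the crux, and I expect it to be the main obstacle. I would prove it by a monotonicity argument for the value sets. For $\sigma>1$ let $V_\sigma$ be the closure of $\{g(\sigma+it):t\in\mathbb R\}$; since $\log g(\sigma+it)$ is a sum of one term per prime, each depending only on $t\log p\bmod2\pi$, and the $\log p$ are $\mathbb Q$-linearly independent, Kronecker's theorem identifies the closure $U_\sigma$ of $\{\log g(\sigma+it)\}$ with the Minkowski sum over $p$ of the loops $L_p(\sigma)=\{\log g_p(\sigma+it):t\in\mathbb R\}$, whence $V_\sigma=\exp(U_\sigma)$ is compact and connected. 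The loops with $p\in\mathcal P$ have size of order $p^{-\sigma}$ and dominate; since $\sum_{p\in\mathcal P}p^{-\sigma}\to\infty$ as $\sigma\to1^+$ while $\sum_{p\in\mathcal P}p^{-1}$ diverges (so that no single prime ultimately dominates the others), one shows that $U_\sigma$ grows as $\sigma$ decreases and eventually contains a disc about the origin of radius tending to infinity. Hence the property ``$U_\sigma$ meets $i\pi+2\pi i\mathbb Z$'' (that is, $-1\in V_\sigma$; and likewise ``$U_\sigma$ meets $2\pi i\mathbb Z$'', that is, $+1\in V_\sigma$) is monotone in $\sigma$ on $(1,\sigma(Q_i))$; it holds for $\sigma$ arbitrarily close to $\sigma(Q_i)$, since by definition of $\sigma(Q_i)>1$ there are zeros of $E(s,Q_i)$ with real part arbitrarily near $\sigma(Q_i)$; therefore $\mp1\in V_\sigma$ throughout $(1,\sigma(Q_i))$. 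Finally, ``$\mp1\in V_\sigma$'' is upgraded to an actual zero with real part inside any prescribed $(\sigma_1,\sigma_2)\ni\sigma$ by a normal-families / Hurwitz argument (the translates $g(\cdot+it_n)$ of the Dirichlet series $g$ cannot degenerate to the constant $\mp1$, as one sees from its Bohr--Fourier coefficients). The genuinely delicate point is that $R(s)$ has the same order of magnitude as the dominant sum unless $\sigma$ is large, so the growth of $U_\sigma$ must be extracted by treating $R$ as a bounded perturbation of the explicit loops attached to $\mathcal P$ rather than by a crude size comparison; this, together with checking that no single prime of $\mathcal P$ dominates the rest for $\sigma$ in the relevant range, is where the real work lies.
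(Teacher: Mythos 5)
The paper does not prove this theorem; it is cited as a prior result of Bombieri and Mueller \cite{BM}, and the paper invokes it only to settle the positivity of $c$ for $Q_1, Q_2$ in Theorem~\ref{thm:1}. So there is no internal proof to compare against, and I will assess your outline on its own terms.

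Your reduction is sound, and in fact matches the machinery the paper does develop in $\S 4$: the upper bound $O(T)$ via Jensen's formula and the mean $\varphi(\sigma)$, and the step ``one zero in the open strip implies $\gg T$ zeros'' is precisely the paper's Theorem~\ref{offzero:1} (Dirichlet/Rouch\'e on almost periods). The description $g=L(s,\chi_1)/L(s,\chi_0)=\exp(-4\sum_{p\in\mathcal P}p^{-s}+R(s))$ with $R$ holomorphic and locally bounded in $\R s>\tfrac12$ is essentially correct for discriminant $-20$ (the $-4$ comes from split primes whose prime factors lie in the non-principal class; inert primes cancel; ramified primes are finitely many and absorbed into $R$). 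And the use of Kronecker's theorem to identify the closure $U_\sigma$ of $\{\log g(\sigma+it)\}$ with the Minkowski sum of the per-prime loops is a correct and standard Bohr-theory step.

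The genuine gap is exactly where you flag it, but I would state the problem more sharply than ``this is where the real work lies'': the monotonicity claim for $\{\sigma:\mp1\in V_\sigma\}$ is not merely unproved, it is false in the naive form. The Minkowski sum of circles centered at the origin with radii $r_p(\sigma)\asymp p^{-\sigma}$ is an annulus $\{R_-(\sigma)\le|z|\le R_+(\sigma)\}$ (degenerating to a disc once $R_-\le0$), with $R_+=\sum_p r_p$ and $R_-=\max_p r_p-\sum_{p\ne p_{\max}} r_p$. As $\sigma$ decreases, $R_+$ increases, but $R_-$ can \emph{also} increase before eventually becoming negative: all the $r_p$ grow, and for $\sigma$ not too small the dominant prime's $r_{p_1}$ can outpace the tail. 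So a target point such as $\pm i\pi$ (and its $2\pi i\mathbb Z$-translates, all of which you must consider) may lie in $U_\sigma$ for $\sigma$ near $\sigma(Q_i)$, exit as $\sigma$ decreases, and re-enter only for $\sigma$ near $1$; ``$U_\sigma$ grows'' is simply not a set-containment statement. Consequently the claimed deduction from ``holds near $\sigma(Q_i)$'' to ``holds throughout $(1,\sigma(Q_i))$'' does not follow, and a zero-free vertical substrip is not excluded by your argument. This is not a removable blemish: the paper's own Corollary~\ref{cor:2} shows precisely that, absent further input, one can only say there are \emph{finitely many} such gaps. Closing them for $Q_1,Q_2$ is the content of \cite{BM}, which proceeds by a quantitative analysis of the Jensen function / asymptotic distribution rather than by qualitative monotonicity of value sets, and also requires (and supplies) control of $\sigma(Q_i)$ itself. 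If you want to complete your route, the missing ingredient is an argument showing $R_-(\sigma)<|\text{target}|<R_+(\sigma)$ for \emph{every} $\sigma\in(1,\sigma(Q_i))$ with $R$ treated as a perturbation, for at least one of the $2\pi i\mathbb Z$-translates of the target; that is a genuine computation, not a soft argument.
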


%Their proof follows ideas of Bohr (see Bohr and Jessen \cite{BJ1} for example).
%Theorem \ref{bmt:1} actually gives an asymptotic formula $c( \sigma_1, \sigma_2 ; Q_i )T + o(T)$ with $c >0$ by the theory of mean motion. See the results in Borchsenius and Jessen \cite{BJ} (or Proposition \ref{pro:1} and Theorem \ref{bjt:1} in this paper).

Our main theorem improves the results of Davenport and Heilbronn, Voronin,
and  Bombieri and Mueller by providing an asymptotic formula for the number of zeros in strips.

\begin{thrm}\label{thm:1}
Assume the same hypothesis as in Theorem \ref{vo:1}. Then  $E(s, Q)$
has \,$c\,T+o(T)$ zeros in any region $\sigma_1 < \R s <\sigma_2 $, $|\I s |<T$,
with   $1/2<\sigma_1<\sigma_2$, where $c\geq 0$ and $c$
is a function of $\sigma_1 ,\sigma_2$, and $Q$.
If $\sigma_1 \leq 1$, then $c>0$.
% Here, $c= c(\sigma_1 ,\sigma_2 ,Q) >0$ for $ 1/2< \sigma_1 < \sigma_2 \leqslant 1$.
In  the special cases   $Q_1 (m,n) = m^2 + 5 n^2$ and
$ Q_2 (m,n) = 2m^2 +2mn+3n^2$, we have $c >0$ provided that
$ 1/2< \sigma_1 <  \sigma (Q_i) \ (i=1,2)$.
\end{thrm}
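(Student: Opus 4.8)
The plan is to count the zeros by the argument principle, in the form of Littlewood's lemma, reducing matters to the asymptotic behaviour of the mean value of $\log|E(s,Q)|$ on vertical lines. Put $\sigma(Q)=\sup\{\R\rho:E(\rho,Q)=0\}$; this is finite because $E(s,Q)=r\,m^{-s}\bigl(1+y(s)\bigr)$ with $m=\min_{(x,y)\neq(0,0)}Q(x,y)$, $r$ the multiplicity of that minimum, and $|y(s)|<1$ for $\R s$ large. Fix $\sigma_3>\sigma(Q)$ and, after moving $\sigma_1,\sigma_2$ by $o(1)$ if necessary, assume $E$ does not vanish on $\R s=\sigma_1,\sigma_2,\sigma_3$. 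Writing $\mathcal N(u;T)$ for the number of zeros of $E(s,Q)$ with $\R s>u$ and $0<\I s<T$, Littlewood's lemma applied to the rectangles $[\sigma_i,\sigma_3]\times[0,T]$ and subtracted gives
\[
2\pi\int_{\sigma_1}^{\sigma_2}\mathcal N(u;T)\,du=\int_0^T\log|E(\sigma_1+it,Q)|\,dt-\int_0^T\log|E(\sigma_2+it,Q)|\,dt+\int_{\sigma_1}^{\sigma_2}\arg E(u+iT,Q)\,du+O(1).
\]
For $\R s>\sigma(Q)$ one has the absolutely convergent expansion $\log E(s,Q)=\log r-s\log m+\sum_{n>m}c_n n^{-s}$; combined with a Backlund-type bound for the horizontal variation of $\arg E$ (legitimate because $E$, a finite linear combination of Hecke $L$-functions, has finite order in vertical strips), this gives $\arg E(u+iT,Q)=-T\log m+O(\log T)$ uniformly for $u\in[\sigma_1,\sigma_2]$, so the last integral above equals $-(\sigma_2-\sigma_1)T\log m+O(\log T)$.

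The technical heart is to prove that for each fixed $\sigma>1/2$ the mean value $A(\sigma):=\lim_{T\to\infty}\tfrac1T\int_0^T\log|E(\sigma+it,Q)|\,dt$ exists, is finite, and is a smooth function of $\sigma$. Since each $L(\sigma+it,\chi_j)$ is, for $\sigma>1/2$, approximated in mean square by a truncated Euler product, and the vectors $\bigl(\N(\p)^{-it}\bigr)_\p$ equidistribute on the infinite torus, the vector $\bigl(L(\sigma+it,\chi_1),\dots,L(\sigma+it,\chi_J)\bigr)$, hence $E(\sigma+it,Q)=\sum_j a_j L(\sigma+it,\chi_j)$, has a limiting distribution $\mu_\sigma$ on $\mathbb C$ (Bohr--Jessen theory), which is absolutely continuous; thus $\log|z|$ is $\mu_\sigma$-integrable, and passing to the limit in $T$ — which requires a uniform-in-$T$ bound for the measure of $\{t\le T:|E(\sigma+it,Q)|<\eta\}$, to control the logarithmic singularity — gives $A(\sigma)=\int_{\mathbb C}\log|z|\,d\mu_\sigma(z)$, with smoothness in $\sigma$ inherited from $\mu_\sigma$. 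Combining with the first paragraph,
\[
\tfrac1T\int_{\sigma_1}^{\sigma_2}\mathcal N(u;T)\,du\ \longrightarrow\ \Phi(\sigma_1)-\Phi(\sigma_2),\qquad \Phi(\sigma):=\tfrac1{2\pi}\bigl(A(\sigma)+\sigma\log m-\log r\bigr),
\]
and the single-rectangle form of the same computation identifies $\Phi(\sigma)=\lim_T\tfrac1T\int_\sigma^\infty\mathcal N(u;T)\,du$, so $\Phi$ is non-negative, non-increasing in $\sigma$, convex (being a pointwise limit of the convex functions $\sigma\mapsto\tfrac1T\sum_{0<\I\rho<T}\max(0,\R\rho-\sigma)$), and vanishes for $\sigma\ge\sigma(Q)$. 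Since $\mathcal N(\cdot;T)$ is non-increasing, a standard squeezing argument using the convexity and differentiability of $\Phi$ gives $\mathcal N(\sigma;T)/T\to-\Phi'(\sigma)$ for every $\sigma>1/2$. Hence the number of zeros in the strip equals $\mathcal N(\sigma_1;T)-\mathcal N(\sigma_2;T)=cT+o(T)$ with $c:=\Phi'(\sigma_2)-\Phi'(\sigma_1)\ge0$ by convexity; the count for $|\I s|<T$ follows by doubling, as $E(s,Q)$ has real Dirichlet coefficients.

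It remains to show $c>0$ under the stated hypotheses, i.e.\ that $\Phi$ is not affine on $[\sigma_1,\sigma_2]$. If $\sigma_1<1$, choose $\sigma_1<a<b<\min(\sigma_2,1)$; Voronin's Theorem~\ref{vo:1} gives $\gg T$ zeros in $a<\R s<b$, $|\I s|<T$, so $\Phi'(a)<\Phi'(b)$, and monotonicity of $\Phi'$ forces $\Phi'(\sigma_1)\le\Phi'(a)<\Phi'(b)\le\Phi'(\sigma_2)$, whence $c>0$. The case $\sigma_1=1$ is identical once one uses that the Davenport--Heilbronn construction produces $\gg T$ zeros in some strip $1<\R s<1+\delta$. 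Finally, for $Q=Q_1$ or $Q_2$ and $1/2<\sigma_1<\sigma(Q_i)$, Theorem~\ref{bmt:1} of Bombieri and Mueller supplies $\gg T$ zeros in every substrip $\sigma_1<\R s<\sigma_2<\sigma(Q_i)$, so the same argument yields $c>0$ throughout that range. The main obstacle is the second paragraph: establishing that $A(\sigma)$ exists, is finite, and is smooth in $\sigma$, since $\log|E(\sigma+it,Q)|$ has logarithmic singularities at the many zeros clustering near the line $\R s=\sigma$, and one must couple the Bohr--Jessen description of the value distribution of $E(s,Q)$ with a quantitative, uniform-in-$T$ estimate for how often $E(\sigma+it,Q)$ is small; everything else is soft or classical.
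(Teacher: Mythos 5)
Your strategy—pass to the Jensen function $A(\sigma)=\lim_T\tfrac1T\int_0^T\log|E(\sigma+it,Q)|\,dt$ via Littlewood's lemma, then read off the zero count from $A'(\sigma_2)-A'(\sigma_1)$—is exactly the strategy of the paper; Theorem~\ref{bjt:1}, which the paper imports from Borchsenius--Jessen, is the same bookkeeping packaged as inequalities \eqref{kre:1} for the relative frequency of zeros in terms of one-sided derivatives of $\varphi_f$. The positivity arguments in your final paragraph (Voronin below $\sigma=1$, Davenport--Heilbronn and $\sigma(Q)>1$ for $\sigma_1=1$, Bombieri--Mueller for $Q_1,Q_2$) also match the paper.

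However, there is a genuine gap, and you flag it yourself without closing it: you need $A(\sigma)=\varphi_E(\sigma)$ to be \emph{differentiable} at every $\sigma>1/2$, not merely convex (convexity only yields the two-sided inequalities \eqref{kre:1}, i.e.\ matching upper and lower bounds of the same order, which is what Voronin and Bombieri--Mueller already had). Your justification, ``smoothness in $\sigma$ inherited from $\mu_\sigma$,'' does not work. First, absolute continuity of the Bohr--Jessen value-distribution $\mu_\sigma$ of $E(\sigma+it)$ gives integrability of $\log|z|$ but says nothing about differentiability of $\sigma\mapsto\int\log|z|\,d\mu_\sigma(z)$: the logarithmic singularity at $z=0$ blocks any naive differentiation under the integral sign, and the needed uniform-in-$T$ small-value estimate for $|E(\sigma+it,Q)|$ is itself a nontrivial claim (it is closely related to what you want to prove). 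Second, and more structurally, $\varphi_E''$ is not controlled by $\mu_\sigma$: as Proposition~\ref{pro:1} makes precise, the second derivative is $2\pi$ times the density at $0$ of the distribution of $E(\sigma+it)$ \emph{weighted by} $|E'(\sigma+it)|^2$. Establishing that this weighted distribution $\nu_\sigma$ is absolutely continuous with a density $G_\sigma$ continuous in $(x,\sigma)$, uniformly approximable by the finite-product analogues $\nu_{n,\sigma}$, is the technical core of the paper. It is carried out in two different ways depending on $J$: for $J=2$ via Theorems~\ref{bjt:2}--\ref{bjt:3} applied to the ratio $L(s,\chi_2)/L(s,\chi_1)$, and for $J\geq 3$ via direct estimates on $\widehat{\nu_{n,\sigma}}(y)$, using Lemma~\ref{fin:1} to bound moments of the truncated products, the stationary-phase estimate of Lemma~\ref{curve:1} to get decay in $\|\y\|$, and the equidistribution of prime ideals among classes (Lemma~\ref{tauber:1}) to guarantee that enough factors $K_{0,k}(\y)$ actually decay. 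None of this is present in your proposal, so what you have is a correct reduction to the paper's real theorem, not a proof of it.
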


Based on his work in \cite{K1}, Ki predicted Theorem \ref{thm:1} for
$Q_1$ and $Q_2$ in his AMS review of Bombieri and Mueller's paper.

We prove Theorem \ref{thm:1} in $\S 3.1$ and $\S3.2$. When $h(D) = 2$ or $3$,
the Epstein zeta function is a linear combination of two
Hecke $L$-functions. In these cases the proof is a
straightforward application of  methods  found in Borchsenius and Jessen \cite{BJ}
and we present it in $\S 3.1$. When the Epstein zeta function is a linear
combination of more than two Hecke L-functions, we are not able to
prove the positivity of the constant $c$ inside the critical strip
by our   method. Fortunately, Voronin's result (Theorem
\ref{vo:1}) guarantees  positivity in this case. We  explain this in $\S
3.2$.

We expect, but are not able to prove, that the constant $c$ in
Theorem~\ref{thm:1} is positive when
$1< \sigma_1 <\sigma_2 < \sigma(Q)$ for all positive definite binary
quadratic forms $Q$ with integer coefficients, and not just  for $Q_1$ and $Q_2$.
 However, we provide several partial results in this direction in $\S 4$.

A consequence of the proof of our main theorem is the following result.
\begin{thrm}\label{co:1}
Assume the same hypothesis as in Theorem \ref{vo:1}. Let $\sigma_0
> \frac{1}{2}$ be fixed. Then, the number of zeros $\rho$ of
$E(s,Q)$ with $\R \rho = \sigma_0 $ and $0< \I \rho <T $ is $o(T)$
as $T \rightarrow \infty$.
\end{thrm}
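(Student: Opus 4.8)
The plan is to deduce Theorem~\ref{co:1} from Theorem~\ref{thm:1} by a squeezing argument in the real part, letting the width of the strip tend to zero. Write $N(\sigma_1,\sigma_2,T)$ for the number of zeros $\rho$ of $E(s,Q)$ with $\sigma_1<\R\rho<\sigma_2$ and $|\I\rho|<T$, and let $n(\sigma_0,T)$ denote the quantity in Theorem~\ref{co:1}. Fix $\sigma_0>1/2$ and $\epsilon>0$ small. Every zero counted by $n(\sigma_0,T)$ satisfies $\sigma_0-\epsilon<\R\rho=\sigma_0<\sigma_0+\epsilon$ and $0<\I\rho<T$, so trivially
\begin{equation*}
n(\sigma_0,T)\ \le\ N(\sigma_0-\epsilon,\sigma_0+\epsilon,T)\ =\ c(\sigma_0-\epsilon,\sigma_0+\epsilon)\,T+o(T),
\end{equation*}
the last equality being Theorem~\ref{thm:1}. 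Hence $\limsup_{T\to\infty}n(\sigma_0,T)/T\le c(\sigma_0-\epsilon,\sigma_0+\epsilon)$ for every $\epsilon>0$, and it suffices to prove that $c(\sigma_0-\epsilon,\sigma_0+\epsilon)\to 0$ as $\epsilon\to 0^+$.

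For this I use the shape of the constant $c$ produced in the proof of Theorem~\ref{thm:1}. Following \cite{BJ}, that proof attaches to $E(s,Q)$ the mean value function
\begin{equation*}
\varphi(\sigma)=\lim_{T\to\infty}\frac{1}{2T}\int_{-T}^{T}\log|E(\sigma+it,Q)|\,dt,
\end{equation*}
which exists and is convex on $(1/2,\infty)$, and it identifies $c(\sigma_1,\sigma_2)$, up to the normalization $2\pi$, with the mass assigned by the measure $\varphi''$ to the open interval $(\sigma_1,\sigma_2)$; that is, $2\pi\,c(\sigma_1,\sigma_2)=\varphi'(\sigma_2^-)-\varphi'(\sigma_1^+)$. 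Consequently
\begin{equation*}
2\pi\,c(\sigma_0-\epsilon,\sigma_0+\epsilon)=\varphi'\big((\sigma_0+\epsilon)^-\big)-\varphi'\big((\sigma_0-\epsilon)^+\big)\ \longrightarrow\ \varphi'(\sigma_0^+)-\varphi'(\sigma_0^-)
\end{equation*}
as $\epsilon\to 0^+$, the limit being the jump of the nondecreasing function $\varphi'$ at $\sigma_0$. Thus Theorem~\ref{co:1} is equivalent to the assertion that $\varphi$ has no corner at $\sigma_0$, i.e.\ that $\varphi\in C^1(1/2,\infty)$.

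The differentiability of $\varphi$ is where the real work lies, and it is supplied by the same value--distribution input that drives Theorem~\ref{thm:1}. The Bohr--Jessen limiting distribution $\mu_\sigma$ of $E(\sigma+it,Q)$ on $\mathbb{C}$ is, for $\sigma>1/2$, absolutely continuous with respect to area measure: one first obtains the joint limiting distribution of $\big(L(\sigma+it,\chi_1),\dots,L(\sigma+it,\chi_J)\big)$ on $\mathbb{C}^{J}$, which is absolutely continuous because the Hecke $L$-functions $L(s,\chi_j)$ have infinitely many mutually independent Euler factors (the analogue, established en route to Theorem~\ref{thm:1}, of the classical theorem of Jessen and Wintner for Dirichlet series), and then pushes it forward under the surjective linear map $(z_j)\mapsto\sum_j a_j z_j$. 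Together with uniform control of this density and its $\sigma$-derivative on compact subsets of $(1/2,1)\cup(1,\infty)$ — the isolated point $\sigma=1$ causes no difficulty, since the simple pole of $E(s,Q)$ at $s=1$ meets the line $\R s=1$ in a single point and contributes a locally integrable, smoothly varying term to the mean — this permits differentiation under the integral sign in $\varphi(\sigma)=\int_{\mathbb{C}}\log|z|\,d\mu_\sigma(z)$, so that $\varphi'$ is continuous. Equivalently, $\varphi''\in L^{1}_{\mathrm{loc}}$ and $c(\sigma_1,\sigma_2)=\frac{1}{2\pi}\int_{\sigma_1}^{\sigma_2}\varphi''(\sigma)\,d\sigma$, whence $c(\sigma_0-\epsilon,\sigma_0+\epsilon)\to 0$ as $\epsilon\to 0^+$, completing the proof.

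The main obstacle is precisely the absence of corners of $\varphi$ throughout $(1/2,\infty)$: convexity alone only yields differentiability off a countable set, which would leave countably many exceptional vertical lines, whereas Theorem~\ref{co:1} asserts $o(T)$ zeros on \emph{every} line with $\R s>1/2$. Ruling out corners everywhere rests on the absolute continuity and mild regularity of $\mu_\sigma$ near the origin, which in turn is a statement about the independence of the Euler factors of the $L(s,\chi_j)$ — the one genuinely nontrivial ingredient beyond the argument principle and the squeezing above, and one that is already in place once Theorem~\ref{thm:1} has been proved.
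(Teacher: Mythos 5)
Your proof is correct in substance and follows essentially the same route as the paper: squeeze the strip to width $2\epsilon$, use Theorem~\ref{thm:1} to bound $n(\sigma_0,T)$ by $c(\sigma_0-\epsilon,\sigma_0+\epsilon)T+o(T)$, and conclude by showing $c(\sigma_0-\epsilon,\sigma_0+\epsilon)\to 0$. The paper phrases the last step by noting that the proof of Theorem~\ref{thm:1} already produced the representation $c(\sigma_1,\sigma_2)=\int_{\sigma_1}^{\sigma_2}G_\sigma\,d\sigma$ with $G_\sigma$ a \emph{continuous} function of $\sigma$ (equivalently, $\varphi''_{E}$ is continuous, hence $\varphi_E\in C^1$), so the integral over a shrinking interval vanishes; your reduction to ``$\varphi$ has no corner at $\sigma_0$'' is exactly this, correctly located and correctly identified as the crux.

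One technical remark: you justify the absence of corners by proposing to differentiate under the integral in $\varphi(\sigma)=\int_{\mathbb C}\log|z|\,d\mu_\sigma(z)$, where $\mu_\sigma$ is the \emph{unweighted} Bohr--Jessen distribution of $E(\sigma+it,Q)$. That is not the mechanism the paper uses. Following Borchsenius--Jessen, the paper instead works with the distribution $\nu_\sigma$ of $E(\sigma+it,Q)$ \emph{weighted by} $|E'(\sigma+it,Q)|^2$, and invokes Proposition~\ref{pro:1}: $\varphi''_{E-x}(\sigma)=2\pi G_\sigma(x)$, where $G_\sigma$ is the density of $\nu_\sigma$. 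The weight is there precisely to avoid the awkwardness of the $\log|z|$ singularity at $z=0$ and to produce a clean identity for $\varphi''$ rather than $\varphi'$. Your alternative would require establishing joint smoothness and decay of the density of the unweighted $\mu_\sigma$ (not done in the paper) and verifying the dominated-convergence hypotheses near $z=0$; this is plausible but is a different, and not obviously easier, piece of work than the one the paper actually carries out. Since you only need to \emph{cite} the $C^1$ property, which is already available from the proof of Theorem~\ref{thm:1} via $\nu_\sigma$, the cleaner statement is simply to invoke equations~(\ref{eqn:4})/(\ref{eqn:5}) directly, as the paper does.
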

We give the proof at the end of $\S3.2$.

\section{Preliminaries}

We begin with a few background definitions and facts, many of which may be found in Borchsenius and Jessen~\cite{BJ}.

    Let $f(s) = f(\sigma+it)$ be almost periodic in the strip $[\alpha, \beta]$ and not identically zero, and define
  the  Jensen function of $f(s)$ by 
    $$ 
    \varphi_f (\sigma) = \lim_{T_2 - T_1 \rightarrow \infty} 
    \frac{1}{T_2-T_1} \int_{T_1} ^{T_2} \log |f(\sigma+it)|dt.
    $$
Then the convergence is uniform  in the interval $[\alpha, \beta]$, and $\varphi_f (\sigma) $ is a convex function of $\sigma$.    
    
    A distribution function  in $\mathbb{C}^J$ is a completely additive, nonnegative set function $\mu(B)$ defined for all Borel sets $B \subseteq \mathbb{C}^J$ with $ \mu(\mathbb{C}^J) < \infty$. A set $B$ is called a continuity set of $\mu$ if $\mu( \textrm{closure of } B) = \mu ( \textrm{interior of } B )$. A sequence of distribution functions 
    $\mu_n$ is said to  converge to $\mu$, written $\mu_n \rightarrow \mu$, if there exists a distribution function $\mu$ such that $\mu_n (B) \rightarrow \mu(B)$ for all continuity sets of   $\mu$. We have $\mu_n \rightarrow \mu$ if and only if $\int_{\mathbb{C}^J} h(\x) d\mu_n (\x) \rightarrow \int_{\mathbb{C}^J} h(\x) d\mu(\x)$ holds for all bounded continuous functions $h(\x)$ in $\mathbb{C}^J$.

    A distribution function $\mu_\sigma$ depending on a parameter $\sigma \in ( \alpha, \beta)$ is said to depend continuously on $\sigma$, if $\mu_{\sigma_n} \rightarrow \mu_{\sigma_0}$  
     for any sequence $\sigma_n$ in $( \alpha, \beta)$ converging to $\sigma_0$. A distribution function $\mu$ is called absolutely continuous if $\mu(B)=0$ for every Borel set $B$ of measure $0$. This is the case if and only if there exists an integrable function $F(\x)$ in $\mathbb{C}^J$ such that $\mu(B) = \int_B F(\x) d\x$ for any Borel set $B$; here $d\x$ is the Lebesgue measure $\prod_{j=1}^J dx_j dx'_j$ for $\x = ( x_1 + ix'_1 , \dots , x_J + i x'_J ) \in \mathbb{C}^J$. We call $F(\x)$ the density of $\mu$. 

    We define the Fourier transform of a distribution function $\mu$ as $ \hat{\mu}(\y) = \int_{\mathbb{C}^J} e^{i\x \cdot \y} d\mu(\x)$, where $ \x \cdot \y = \sum_{j=1}^J \{ x_j y_j + x'_j y'_j \} $ is an inner product of $\x = ( x_1+ix'_1 , \dots , x_J+ix'_J )\in \mathbb{C}^J $ and $\y =(y_1+iy'_1 , \dots, y_J+iy'_J ) \in \mathbb{C}^J $. 
 It is uniformly continuous and bounded, and the maximum of its absolute value is $\hat{\mu}(0) = \mu (\mathbb{C}^J)$. If $\hat{\mu}=\hat{\nu}$, then $\mu=\nu$. If $\mu_n \rightarrow \mu$, then $\hat{ \mu}_n (\y) \rightarrow \hat{\mu}(\y)$ holds uniformly in $ \| \y \| = \sqrt{ \y \cdot \y } \leqslant a$ for any $a > 0$; conversely, if a sequence of Fourier transforms $\hat{\mu}_n(\y)$ is uniformly convergent in $\| \y \| \leqslant a$ for any $a >0$, then the limit function is also the Fourier transform of a distribution function $\mu$, and $\mu_n \rightarrow \mu$. If the integral $\int_{\mathbb{C}^J} \| \y \|^q |\hat{\mu}(\y)|d\y$ is finite for an integer $q \geqslant 0$, then $\mu$ is absolutely continuous and its density $F(\x)$, determined by the inversion formula $F(\x) = (2 \pi )^{-J} \int_{\mathbb{C}^J} e^{-i\x \cdot \y} \hat{\mu}(\y)d\y$, is continuous and possesses continuous partial derivatives of order $\leqslant q$. Note that $\int_{\mathbb{C}^J} \| \y \|^q |\hat{\mu}(\y)|d\y$ is finite if for some $\epsilon>0$, $\hat{\mu}(\y) = O(\| \y \|^{-(2J + q + \epsilon)})$ as $\| \y \| \rightarrow \infty$.

    Now we establish a connection between Jensen functions $\varphi_{f-x}(\sigma)$ and distribution functions $\nu_\sigma$. For any $ \sigma$ and any interval $ I = [ T_1 , T_2 ]$, we define  the distribution function of 
    $f(\sigma+it)$ with respect to $ |f'(\sigma+it)|^2$ over the interval $ t \in I$  by
    $$\nu_{\sigma, I}(B) = \frac{1}{T_2 - T_1} \int_{A_{\sigma,I}(B)} |f'(\sigma+it)|^2 dt, $$
    where $A_{\sigma,I}(B)$ denotes the set of points in $t\in I$ for which $f(\sigma+it)\in B$. Then $\nu_{\sigma,I}$ converges to a distribution function $\nu_\sigma$ as $T_2 - T_1 \rightarrow \infty$. 
    %$\S 7$ of \cite{BJ} illustrates the existence and the dependence continuously on $\sigma$. 
    We call $\nu_\sigma$ the \emph{asymptotic distribution function} of $f(\sigma+it)$ with respect to $|f'(\sigma +it)|^2 $. The following proposition,  summarizing  $\S 9 $ of \cite{BJ},  describes a relation between $\varphi_{f-x}(\sigma)$ and $\nu_\sigma$.

    \begin{prop}\label{pro:1}
    Let $f(s)$ be almost periodic in the strip $[\alpha, \beta]$ and not identically zero. Let $\nu_\sigma$ be the asymptotic distribution function of $f(\sigma+it)$ with respect to $|f'(\sigma +it)|^2 $. Suppose $\nu_\sigma$ is absolutely continuous for every $\sigma$ and its density $G_\sigma (x)$ is a continuous function of $x$ and $\sigma$. Then the Jensen function $\varphi_{f-x}(\sigma)$ is twice differentiable with   second derivative
    \begin{equation}\label{eqn:pro1}
    \varphi_{f-x}''(\sigma) = 2 \pi G_\sigma (x).
    \end{equation}
    \end{prop}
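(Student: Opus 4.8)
The plan is to compute the distributional second derivative of $\sigma\mapsto\varphi_{f-w}(\sigma)$, to identify it with the limiting $\sigma$-density of the $w$-points of $f$, to convert that density into $\nu_\sigma$ by the area formula for the holomorphic map $f$, and finally to use the regularity hypothesis on $G_\sigma$ to upgrade the resulting distributional identity to the pointwise one. Fix $w\in\mathbb C$ and put $g=f-w$, again almost periodic in $[\alpha,\beta]$ and not identically zero; recall $\varphi_{f-w}=\varphi_g$ is finite and convex in $\sigma$ and $\frac1{|I|}\int_I\log|g(\sigma+it)|\,dt\to\varphi_g(\sigma)$ uniformly for $\sigma\in[\alpha,\beta]$. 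Since $f$ is almost periodic, $|f|$ is bounded on the strip, so all $w$-points lie in a fixed disk and every $\nu_\sigma$ is supported there; this rules out any loss of mass ``at $w=\infty$'' below.

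The first step is two unconditional identities. (i) Given $\phi\in C_c^\infty(\alpha,\beta)$, apply Green's identity on a rectangle $[\alpha',\beta']\times[T_1,T_2]$ whose $\sigma$-range contains $\operatorname{supp}\phi$ to the pair $\big(\log|g|,\phi(\sigma)\big)$, using $\Delta\log|g|=2\pi\sum_\rho m_\rho\delta_\rho$ (zeros of $g$ with multiplicity). Since $\phi,\phi'$ vanish on the vertical edges, only the two horizontal edges survive; dividing by $|I|=T_2-T_1$, letting $|I|\to\infty$, and using the uniform convergence of the Jensen averages together with the (standard) fact that the horizontal-edge contributions are $o(|I|)$, one gets
\begin{equation*}
\int\phi''(\sigma)\,\varphi_{f-w}(\sigma)\,d\sigma=2\pi\lim_{|I|\to\infty}\frac1{|I|}\sum_{\substack{f(\rho)=w\\ \I\rho\in I}}m_\rho\,\phi(\R\rho),
\end{equation*}
so $\varphi_{f-w}''=2\pi\mu_{f-w}$ as distributions in $\sigma$, where $\mu_{f-w}$ is the (necessarily nonnegative) limiting measure on the right. (ii) Since $f$ is holomorphic with Jacobian $|f'|^2$, the area formula gives, for $\psi\in C_c^\infty(\alpha,\beta)$ with $\psi\ge 0$ and bounded continuous $h$ on $\mathbb C$,
\begin{equation*}
\int_{\mathbb C}h(w)\sum_{\substack{f(\rho)=w\\ \I\rho\in I}}\psi(\R\rho)\,dw=\int_I\!\!\int_{\alpha}^{\beta}h\big(f(\sigma+it)\big)\,\psi(\sigma)\,|f'(\sigma+it)|^2\,d\sigma\,dt,
\end{equation*}
the multiplicity-free count on the left differing from the one in (i) only on the null set of critical values of $f$. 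Dividing by $|I|$ and letting $|I|\to\infty$: on the right, writing the inner $t$-integral as $\int h\,d\nu_{\sigma,I}$ and using $\nu_{\sigma,I}\to\nu_\sigma$ with the uniform mass bound $\nu_{\sigma,I}(\mathbb C)=O(1)$, dominated convergence in $\sigma$ gives $\int_\alpha^\beta\psi(\sigma)\big(\int_{\mathbb C}h\,d\nu_\sigma\big)d\sigma$; on the left, the integrand converges pointwise in $w$ by (i), and — as everything is supported in a fixed compact set with total mass staying $O(1)$ — the left side tends to $\frac1{2\pi}\int_{\mathbb C}h(w)\big(\int\psi''\,\varphi_{f-w}\big)dw$. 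Hence, for every bounded continuous $h$,
\begin{equation*}
\frac1{2\pi}\int_{\mathbb C}h(w)\Big(\int\psi''(\sigma)\,\varphi_{f-w}(\sigma)\,d\sigma\Big)dw=\int_{\alpha}^{\beta}\psi(\sigma)\Big(\int_{\mathbb C}h(w)\,d\nu_\sigma(w)\Big)d\sigma.
\end{equation*}

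Finally, invoke the hypothesis: with $d\nu_\sigma(w)=G_\sigma(w)\,dw$ and $G$ jointly continuous, the right side equals $\int\!\!\int\psi(\sigma)h(w)G_\sigma(w)\,dw\,d\sigma$, so, $h$ being arbitrary, we get for a.e.\ $w$ — and then, both sides being continuous in $w$, for every $w$ —
\begin{equation*}
\int\psi''(\sigma)\,\varphi_{f-w}(\sigma)\,d\sigma=2\pi\int\psi(\sigma)\,G_\sigma(w)\,d\sigma\qquad\bigl(\psi\in C_c^\infty(\alpha,\beta)\bigr),
\end{equation*}
i.e.\ the distributional second derivative of $\varphi_{f-w}$ in $\sigma$ equals the function $2\pi G_\sigma(w)$. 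Since $\sigma\mapsto G_\sigma(w)$ is continuous, a convex function whose distributional second derivative is this continuous function must be of class $C^2$, with classical second derivative $2\pi G_\sigma(w)$, which is the claim. The genuinely technical part — the content of $\S 9$ of \cite{BJ} — lies in the two passages $|I|\to\infty$: that the horizontal-edge terms in Green's identity are negligible (so that $\varphi_{f-w}''$ really counts the $w$-points), and that the pointwise convergence of the densities $\frac1{|I|}\sum_\rho\psi(\R\rho)$ is compatible with the weak limit obtained from the right-hand side (no concentration of mass in the limit). Everything after the displayed identities is soft measure theory.
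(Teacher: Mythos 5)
The paper does not prove Proposition~\ref{pro:1}: it is stated as ``summarizing $\S 9$ of \cite{BJ}'' and the reader is referred there. Your proposal is therefore a de novo argument, and its architecture is the right one and almost certainly mirrors the potential-theoretic structure of Borchsenius--Jessen: Green's identity on a rectangle to identify the distributional $\partial_\sigma^2\varphi_{f-w}$ with $2\pi$ times the limiting density of $w$-points; the holomorphic area formula (Jacobian $|f'|^2$) to convert that density into the weighted distribution $\nu_\sigma$; and the regularity hypothesis on $G_\sigma$ to turn the distributional identity into a classical $C^2$ statement. The two limit interchanges you flag as the technical heart (negligibility of the horizontal-edge boundary terms, and compatibility of the pointwise and weak limits as $|I|\to\infty$) are indeed genuine and are standard in this framework.

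There is, however, one step that is more than ``soft measure theory'' and which you assert without justification: the upgrade from ``for a.e.\ $w$'' to ``for every $w$'' at the end, where you claim that both sides of $\int\psi''(\sigma)\varphi_{f-w}(\sigma)\,d\sigma=2\pi\int\psi(\sigma)G_\sigma(w)\,d\sigma$ are continuous in $w$. The right-hand side is continuous by hypothesis. The left-hand side is not obviously so. Indeed $\varphi_{f-w}(\sigma)$ is the logarithmic potential $\int_{\mathbb C}\log|z-w|\,d\tilde\nu_\sigma(z)$ of the \emph{unweighted} asymptotic distribution $\tilde\nu_\sigma$ of $t\mapsto f(\sigma+it)$, and potentials of general finite measures are a priori only upper semicontinuous in $w$. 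The hypothesis of the proposition controls only the $|f'|^2$-weighted measure $\nu_\sigma$, and regularity of $\nu_\sigma$ does not transfer to $\tilde\nu_\sigma$: the weight $|f'|^2$ can vanish, so $\tilde\nu_\sigma$ may concentrate near the critical values of $f$ even when $G_\sigma$ is smooth. Equivalently, from the distributional identity you know $\mu_w:=\varphi_{f-w}''=2\pi G_\sigma(w)\,d\sigma$ for a.e.\ $w$, and you need to rule out a singular part of $\mu_{w_0}$ at the exceptional $w_0$; without continuity of $w\mapsto\varphi_{f-w}$ (or uniform-in-$w$ convergence of the Jensen averages) this does not follow from convexity alone. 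Locating and proving the right continuity/uniformity statement is exactly where the weight of \cite{BJ}'s argument lies, and your proof should either supply it or be explicit that this is being imported.
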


Taking   $f(s)$ equal to  $E(s,Q)$ or $L(s,\chi)$, we see that Proposition~\ref{pro:1} only applies 
when $\sigma>1$ because $E(s,Q)$ and $L(s,\chi)$ are almost periodic only  in this half-plane. The main difficulty we face in our proof of Theorem~\ref{thm:1} is to show that \eqref{eqn:pro1} 
also holds in the half-plane $ \sigma > 1/2$.

 To state our next proposition we require the following definition.  
    \begin{defn}
    Suppose that $r>0$ and that $f(s), f_1 (s), f_2 (s) , \dots$ are functions defined in the half strip $\alpha<\R s <\beta$, $\I s > 1$, where $\beta$ may be $\infty$. Then we say that $f_n (s)$ converges in the mean with index $r$ towards $f(s)$ in $[\alpha, \beta]$ if and only if 
    $$ \limsup_{T \to \infty} \frac{1}{T} \int_1 ^T \int_{\alpha_1} ^{\beta_1} | f(\sigma+it)-f_n (\sigma+it)|^r d\sigma dt \rightarrow 0$$
    as $n \to \infty$ for any interval $\alpha<\alpha_1 < \sigma<\beta_1<\beta$.
    \end{defn}

    \begin{prop}\label{pro:2}
    Let $L (s, \chi )$ be a Hecke L-function  with $ \chi$  any character of the ideal class group of $\mathbb{Q} (\sqrt{D})$, and let  $E(s, Q)$ be the Epstein zeta function associated with  a positive definite binary quadratic form $Q$ with integer coefficients.
Let $L_n (s, \chi) = \prod_{ \N(\p)\leqslant p_n} (1- \chi(\p) \N (\p)^{-s})^{-1}$ and 
    $E_n (s, Q) = \sum_{j=1}^J a_j L_n (s , \chi_j)$, where $p_n$ is the n-th prime number. 
    Then $L_n (s, \chi)$ and $E_n (s, Q)$ converge in mean with  index 2 
    towards $L(s, \chi)$ and $E(s, Q)$, respectively, in $[ 1/2 , \infty]$.
   \end{prop}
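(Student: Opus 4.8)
\emph{Plan of proof.}
Since $E_n(s,Q)-E(s,Q)=\sum_{j=1}^{J}a_j\bigl(L_n(s,\chi_j)-L(s,\chi_j)\bigr)$ is a sum of only $J$ terms, the Minkowski inequality for the $L^{2}$-norm over the rectangle $(\sigma,t)\in[\alpha_1,\beta_1]\times[1,T]$ reduces the statement for $E(s,Q)$ to the statement for each individual Hecke $L$-function; so I fix a character $\chi$ of the ideal class group of $\mathbb{Q}(\sqrt D)$, abbreviate $L(s)=L(s,\chi)$ and $L_n(s)=L_n(s,\chi)$, and — by Tonelli's theorem and dominated convergence, using the mean-square bound recalled below to majorise the inner integral for large $T$ by a function integrable in $\sigma$ — reduce further to the assertion that, for each fixed $\sigma$ with $1/2<\alpha_1\le\sigma\le\beta_1<\infty$,
\[
\|L-L_n\|^{2}:=\limsup_{T\to\infty}\frac1T\int_1^{T}\bigl|L(\sigma+it)-L_n(\sigma+it)\bigr|^{2}\,dt\ \longrightarrow\ 0\qquad(n\to\infty),
\]
with the convergence uniform in $\sigma\in[\alpha_1,\beta_1]$; the functional $\|\cdot\|$ is a seminorm, so I may use the triangle inequality freely. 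The idea is to interpose the exponentially smoothed Dirichlet series
\[
\mathcal L_X(s)=\sum_{\n}\frac{\chi(\n)}{\N(\n)^{s}}\,e^{-\N(\n)/X},\qquad
\mathcal L_{X,n}(s)=\sum_{\substack{\n:\ \N(\p)\le p_n\\ \text{for every }\p\mid\n}}\frac{\chi(\n)}{\N(\n)^{s}}\,e^{-\N(\n)/X}\qquad(X\ge1),
\]
each absolutely convergent for every real $\sigma$, and to split
\[
\|L-L_n\|\ \le\ \|L-\mathcal L_X\|\ +\ \|\mathcal L_X-\mathcal L_{X,n}\|\ +\ \|\mathcal L_{X,n}-L_n\|.
\]

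The last two terms are elementary. A routine mean-value argument for absolutely convergent Dirichlet series gives $\bigl\|\sum_m a(m)m^{-\sigma-it}\bigr\|^{2}\le\sum_m|a(m)|^{2}m^{-2\sigma}$ (Montgomery--Vaughan). Writing $r(m)$ for the number of integral ideals of $\mathbb{Q}(\sqrt D)$ of norm $m$ and using $r(m)\le\sum_{e\mid m}1$, this yields
\[
\|\mathcal L_{X,n}-L_n\|^{2}\le\sum_{m\ge1}r(m)^{2}\bigl(1-e^{-m/X}\bigr)^{2}m^{-2\sigma},\qquad
\|\mathcal L_X-\mathcal L_{X,n}\|^{2}\le\sum_{m>p_n}r(m)^{2}m^{-2\sigma},
\]
the second estimate because the Dirichlet coefficients of $\mathcal L_X-\mathcal L_{X,n}$ are supported on $\N(\n)>p_n$. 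Both series are dominated by $\sum_m\bigl(\sum_{e\mid m}1\bigr)^{2}m^{-2\sigma}=\zeta(2\sigma)^{4}/\zeta(4\sigma)<\infty$ (since $\sigma>1/2$); hence the first tends to $0$ as $X\to\infty$, uniformly in $n$, by dominated convergence, while for each fixed $X$ the second tends to $0$ as $n\to\infty$. All bounds here are uniform for $\sigma\in[\alpha_1,\beta_1]$.

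The substantial step is $\|L-\mathcal L_X\|$. Starting from $e^{-y}=\tfrac1{2\pi i}\int_{(2)}\Gamma(w)y^{-w}\,dw$ one obtains $\mathcal L_X(s)=\tfrac1{2\pi i}\int_{(2)}L(s+w)\Gamma(w)X^{w}\,dw$ for $\R s>-1$; I then move the contour to $\R w=-\eta$, where $\eta=\tfrac12\min\{1,\alpha_1-\tfrac12\}$, so that $\sigma-\eta>1/2$ throughout and $\Gamma$ has no pole on that line. The shift is legitimate because $\Gamma(w)$ decays exponentially on vertical lines while $L(s+w)$ grows only polynomially there (convexity). The pole of $\Gamma$ at $w=0$ contributes the residue $L(s)$; when $\chi$ is the principal character the pole of the Dedekind zeta function $\zeta_{\mathbb{Q}(\sqrt D)}(s)$ at $s=1$ may in addition contribute a term of the form $c\,\Gamma(1-s)X^{1-s}$, but $\tfrac1T\int_1^{T}|\Gamma(1-\sigma-it)|^{2}\,dt\ll_{\sigma}T^{-1}$, so such a term contributes nothing to $\|\cdot\|$. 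Hence $L(s)-\mathcal L_X(s)$ equals, up to that negligible term, $-\tfrac1{2\pi i}\int_{(-\eta)}L(s+w)\Gamma(w)X^{w}\,dw$, and bounding this integral by the Cauchy--Schwarz inequality in $w$ against the finite measure $|\Gamma(-\eta+iv)|\,dv$, followed by the classical mean-square estimate
\[
\int_0^{T}\bigl|L(\sigma-\eta+it,\chi)\bigr|^{2}\,dt\ \ll_{\alpha_1,\beta_1}\ T\qquad(T\to\infty)
\]
on the line $\sigma-\eta>1/2$, gives $\|L-\mathcal L_X\|\ll X^{-\eta}$, uniformly in $\sigma\in[\alpha_1,\beta_1]$. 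Putting the three estimates together: given $\varepsilon>0$, first choose $X$ so large that $\|L-\mathcal L_X\|+\|\mathcal L_{X,n}-L_n\|<\varepsilon$ for all $n$, then $n$ so large that $\|\mathcal L_X-\mathcal L_{X,n}\|<\varepsilon$; this proves $\|L-L_n\|\to0$ as $n\to\infty$, uniformly in $\sigma$, as required.

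The only ingredient that is not soft is the mean-square bound $\int_0^{T}|L(\sigma'+it,\chi)|^{2}\,dt\ll_{\sigma'}T$ for fixed $\sigma'>1/2$, and I expect establishing (or precisely citing) this to be the main obstacle; it is standard for the degree-two $L$-function $L(s,\chi)$ — it follows from the approximate functional equation of $L(s,\chi)$ together with the Montgomery--Vaughan mean-value theorem for Dirichlet polynomials — but it is the one place where the analytic continuation and functional equation are genuinely used, everything else resting only on absolute convergence, dominated convergence, and the seminorm inequality.
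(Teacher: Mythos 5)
Your proof is correct, but it follows a genuinely different route from the paper's. Both arguments reduce (via Minkowski) to showing $\|L-L_n\|\to0$ for each Hecke $L$-function in the Besicovitch $B^{2}$ seminorm on vertical lines $\sigma>1/2$, and both insert a middle interpolant whose distance to $L_n$ is controlled by the Montgomery--Vaughan mean-value theorem. The paper's interpolant is the sharp partial sum $\sum_{m\le X}b_m m^{-s}$ with the $t$-dependent length $X=\sqrt{|D|}\,t/(2\pi)$, and the analytic input is the mean-square form of the approximate functional equation,
\begin{equation*}
\int_1^T\Bigl|L(\sigma+it,\chi)-\sum_{m\le X}b_m m^{-\sigma-it}\Bigr|^{2}dt\ll T^{2-2\sigma+\epsilon},
\end{equation*}
cited from Karatsuba--Voronin; the rest is two applications of Montgomery--Vaughan. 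You instead interpose the exponentially smoothed series $\mathcal L_X$ with a fixed parameter $X$, recover $L-\mathcal L_X$ by a Mellin contour shift to $\R w=-\eta$ together with Cauchy--Schwarz against $|\Gamma(-\eta+iv)|\,dv$, and then feed in the classical bound $\int_0^T|L(\sigma-\eta+it,\chi)|^{2}dt\ll T$ on a fixed line to the right of $1/2$. The two pieces of analytic input are of the same strength and ultimately both stem from the approximate functional equation plus Montgomery--Vaughan, so neither route is cheaper; what yours buys is that the only nontrivial fact invoked is the standard second-moment bound at fixed $\sigma'>1/2$, with all the $t$-dependence absorbed into the exponentially decaying $\Gamma$-kernel, while the paper's version is one approximation step shorter and gives the explicit rate $p_n^{1-2\sigma+\epsilon}$. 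Two minor points worth tightening in a polished write-up: first, the mean-value identity $\|\sum a(m)m^{-\sigma-it}\|^{2}=\sum|a(m)|^{2}m^{-2\sigma}$ is used for infinite (though absolutely convergent) Dirichlet series, so either truncate and control tails via $\sum d(m)^{2}m^{-2\sigma}<\infty$ or quote the Besicovitch/Carlson mean-value theorem directly, making sure the resulting estimates are uniform in $\sigma\in[\alpha_1,\beta_1]$ so the reduction from the double integral over $(\sigma,t)$ to a fixed-$\sigma$ statement is legitimate; second, for $1<\sigma<1+\eta$ and $\chi$ principal the pole at $w=1-s$ is still crossed, so your treatment of the residue term should be stated for all $\sigma$ in the range, not only $\sigma<1$.
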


\begin{proof}
We prove that $L_n (s,\chi)$ converges in mean with index 2 to $L(s,\chi)$ in $[1/2, \infty]$. Then the convergence of $E_n ( s, Q)$ follows immediately. Based on the approximate functional equation for $L(s,\chi)$, we have
$$
\int_1^T \left| L(\sigma+it ,\chi) - \sum_{m \leqslant X} \frac{b_m }{ m^{\sigma+it}}  \right|^2 dt \ll T^{2-2\sigma + \epsilon} 
$$
for any $\epsilon>0$ and  $\sigma > 1/2$, where $ X = c t $, $ c=  \sqrt{|D|} /( 2 \pi )$, and $L(s,\chi) = \sum_{m=1}^\infty b_m m^{-s}$. Note that $b_m = \sum_{ \N (\n ) = m } \chi( \n ) $ and $ | b_m | \leqslant d(m) \ll m^\epsilon$ for any $\epsilon>0$. We also have 
$$
\int_1^T \left| \sum_{m \leqslant p_n} \frac{b_m }{ m^{\sigma+it} } - \sum_{m \leqslant X} \frac{b_m }{ m^{\sigma+it}} \right|^2 dt \ll T p_n ^{1-2\sigma+\epsilon} + T^{2-2\sigma+\epsilon}.
$$
See p.280--282 of \cite{KV} for the details. Thus we have
\begin{equation}\label{pro1:eq1}
\limsup_{T \to \infty} \frac1T  \int_1^T \left| L(\sigma+it ,\chi) - \sum_{m \leqslant p_n} \frac{b_m }{ m^{\sigma+it}}  \right|^2 dt  \leqslant C  p_n ^{1-2\sigma+\epsilon} 
\end{equation}
for some constant $C>0$ and any $ \sigma > 1/2$. 

Observe that
$$
 L_n (s,\chi) - \sum_{\N( \m ) \leqslant p_n} \frac{ \chi( \m) }{ \N(\m) ^s} = \sum_{\substack{ \N ( \m ) > p_n \\  \p | \m \Rightarrow \N ( \p ) \leqslant p_n   }}  \frac{ \chi(\m)  }{ \N ( \m )^s}  = \sum_{ \substack{ m > p_n \\ p|m \Rightarrow p \leqslant p_n   }}  \frac{b_m}{m^s}
$$
and
$$
\int_1^T \left|   \sum_{ \substack{ m > p_n \\ p|m \Rightarrow p \leqslant p_n   }}  \frac{b_m}{m^{\sigma+it }} \right|^2 dt =  \sum_{ \substack{ m > p_n \\ p|m \Rightarrow p \leqslant p_n   }}  \frac{|b_m |^2}{m^{2\sigma }}( T+O(m)) \ll T p_n ^{1-2 \sigma + \epsilon} + \prod_{p \leqslant p_n } ( 1- p^{1-2\sigma+\epsilon} )^{-1} 
$$
for any $\epsilon >0$. Thus we have
\begin{equation}\label{pro1:eq2}
\limsup_{T \to \infty} \frac1T  \int_1^T \left| L_n (\sigma+it ,\chi) - \sum_{m \leqslant p_n} \frac{b_m }{ m^{\sigma+it}}  \right|^2 dt  \leqslant C  p_n ^{1-2\sigma+\epsilon} .
\end{equation}
We complete the proof by combining \eqref{pro1:eq1} and \eqref{pro1:eq2}.

\end{proof}

    To prove  Theorem~\ref{thm:1} we require the following sequence of lemmas.  
    
        \begin{lemma}\label{fin:1}
    Let $ \sigma >1/2$ and let $M$ and $K$ be fixed positive integers. 
    Let $\z = (z_1 , \dots, z_K ) \in \mathbb{C}^K $ and 
    $ \w = ( w_1 , \dots, w_K)\in \mathbb{C}^K$. Define
    $$  \mathcal{I} ( \z, \w) = \int _0^1 \cdots \int_0 ^1  \prod_{k=1}^K \prod_{m=1}^n \left( 1- \frac{ c(m,k) e^{2 \pi i \theta_m }}{p_m ^{\sigma+z_k}}\right)^{-1}\left( 1- \frac{ \overline{c(m,k)} e^{-2 \pi i \theta_m }}{p_m ^{\sigma+w_k}}\right)^{-1} d\theta_1 \cdots d\theta_n , $$
    where $|c(m,k)|\leqslant 1$. Then  there is a constant $A$, depending only on $M, K,$ and $\sigma$, 
    such that
    $$ \left| \prod_{k=1}^K  \frac{ \partial^{m_k + n_k} \mathcal{I}  }{\partial z_k ^{m_k} \partial w_k ^{n_k}} (0,0) \right| \leqslant A,$$
   whenever $ \sum_{k=1}^K ( m_k + n_k ) \leqslant M $.
    \end{lemma}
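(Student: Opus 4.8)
The plan is to use the fact that the integrand of $\mathcal I(\z,\w)$ is a product over $m$ of factors each depending on the single variable $\theta_m$, so the integral over $[0,1]^n$ separates:
\[
\mathcal I(\z,\w)=\prod_{m=1}^n I_m(\z,\w),\qquad
I_m(\z,\w)=\int_0^1\prod_{k=1}^K\left(1-\frac{c(m,k)e^{2\pi i\theta}}{p_m^{\sigma+z_k}}\right)^{-1}\left(1-\frac{\overline{c(m,k)}e^{-2\pi i\theta}}{p_m^{\sigma+w_k}}\right)^{-1}d\theta .
\]
Expanding each factor in a geometric series (legitimate since $|c(m,k)|p_m^{-\sigma}\le 2^{-\sigma}<1$), multiplying out and integrating term by term in $\theta$ annihilates every monomial except those in which the total $z$-exponent equals the total $w$-exponent; calling that common value $s$,
\[
I_m(\z,\w)=\sum_{s\ge 0}p_m^{-2s\sigma}\sum_{\substack{(j_k),(l_k)\in\mathbb Z_{\ge 0}^K\\ \sum_k j_k=\sum_k l_k=s}}\left(\prod_{k=1}^K c(m,k)^{j_k}\,\overline{c(m,k)}^{\,l_k}\right)p_m^{-\sum_k j_k z_k-\sum_k l_k w_k}.
\]

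Next I would record two pointwise estimates for the individual factors. Applying $D_m=\prod_k\partial_{z_k}^{a_k}\partial_{w_k}^{b_k}$, of total order $e_m=\sum_k(a_k+b_k)$, to the series above and evaluating at the origin turns $p_m^{-j_k z_k}$ into $(-j_k\log p_m)^{a_k}$ and $p_m^{-l_k w_k}$ into $(-l_k\log p_m)^{b_k}$. The decisive point is that \emph{if $e_m\ge 1$ the $s=0$ term vanishes}: there every $j_k$ and $l_k$ equals $0$, while some exponent $a_k$ or $b_k$ is positive, so $\prod_k j_k^{a_k}l_k^{b_k}=0$. Hence, for $e_m\ge 1$, bounding $|c(m,k)|\le 1$, using $j_k,l_k\le s$ and $\binom{s+K-1}{K-1}^2\le(s+K)^{2K-2}$ for the number of admissible pairs, and pulling the factor $p_m^{-2\sigma}$ out of the $s=1$ term (so that the remaining sum over $s\ge 1$ is a convergent geometric-times-polynomial series, by $p_m\ge 2$ and $e_m\le M$), one gets
\[
|(D_m I_m)(0,0)|\le C_1(M,K,\sigma)\,(1+\log p_m)^{M}\,p_m^{-2\sigma}\qquad(e_m\ge 1);
\]
keeping instead the $s=0$ term (which equals $1$) gives $|I_m(0,0)|\le 1+C_1\,p_m^{-2\sigma}$. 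Since $\sigma>1/2$, both $C_2:=\sum_p(1+\log p)^M p^{-2\sigma}$ and $B_1:=\prod_p(1+C_1 p^{-2\sigma})$ are finite.

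Then I would expand the derivative in question by the Leibniz rule for the finite product $\prod_{m=1}^n I_m$. Writing $D=\prod_k\partial_{z_k}^{m_k}\partial_{w_k}^{n_k}$ with multi-index $\alpha$ in the $2K$ variables and $|\alpha|=d=\sum_k(m_k+n_k)\le M$,
\[
D\mathcal I(0,0)=\sum_{\alpha_1+\cdots+\alpha_n=\alpha}\binom{\alpha}{\alpha_1,\dots,\alpha_n}\prod_{m=1}^n(\partial^{\alpha_m}I_m)(0,0).
\]
In each term at most $d\le M$ of the $\alpha_m$ are nonzero; let $S=\{m:\alpha_m\ne 0\}$ and $t=|S|$. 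Estimating the differentiated factors by the first pointwise bound and extending the product of the undifferentiated factors $1+C_1p_m^{-2\sigma}$ to all $m$ (each such factor is $\ge 1$), the term is at most $\binom{\alpha}{\alpha_1,\dots,\alpha_n}\,C_1^{t}\,B_1\prod_{m\in S}(1+\log p_m)^{M}p_m^{-2\sigma}$. Summing the multinomial coefficients over all ways of distributing $\alpha$ among the indices of $S$ gives at most $t^{d}\le M^{M}$; summing $\prod_{m\in S}(1+\log p_m)^{M}p_m^{-2\sigma}$ over all $S$ with $|S|=t$ gives at most $C_2^{t}/t!$; and summing over $t=0,1,\dots,M$ yields $|D\mathcal I(0,0)|\le(M+1)\,M^{M}\,C_1^{M}\,B_1\,e^{C_2}=:A$, a constant depending only on $M$, $K$, and $\sigma$. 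The only points needing care here but presenting no real difficulty are the justification of differentiating under the integral sign and of the term-by-term geometric expansion — both fine because, for each fixed $n$, $\mathcal I$ is holomorphic in a fixed polydisc about the origin on which the series converge uniformly ($|c(m,k)|\le 1$, $p_m\ge 2$) — together with the elementary estimates on binomial coefficients and on the tails of the sums in $s$. The genuinely essential step, and the only place the hypothesis $\sigma>1/2$ enters, is the vanishing of the $s=0$ Fourier coefficient under any nontrivial differentiation: this is exactly what replaces the divergent $\sum_m(\log p_m)^{e_m}$ by the convergent $\sum_p(1+\log p)^M p^{-2\sigma}$, and hence what makes the constant $A$ independent of $n$.
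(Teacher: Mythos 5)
Your proof is correct, but it takes a genuinely different route from the paper's. The paper also begins by factoring $\mathcal I=\prod_m I_m$ and expanding each $I_m$ in a power series in $p_m^{-2\sigma}$; but instead of differentiating, it fixes $0<\epsilon<\sigma-\tfrac12$, bounds
\[
|\mathcal I(\z,\w)|\;\leqslant\;\prod_{m}\Bigl(1+\sum_{k\geqslant1}\binom{K+k-1}{k}^{2}p_m^{-2k(\sigma-\epsilon)}\Bigr)\;\leqslant\;A_K\,\zeta\bigl(2(\sigma-\epsilon)\bigr)^{K^{2}}
\]
uniformly on the polydisc $\|\z\|,\|\w\|\leqslant\epsilon$, and then extracts all mixed derivatives of order $\leqslant M$ at the origin in one stroke by Cauchy's integral formula on circles of radius $\epsilon/K$. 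That gives the desired constant $A$ depending only on $M,K,\sigma$ in two lines, with no need for the Leibniz expansion or the summation over subsets $S$ and multi-indices; the role of $\sigma>\tfrac12$ is to leave room for $\epsilon$. You instead differentiate each factor $I_m$ directly, observe that any nontrivial derivative kills the constant term $1$ (the $s=0$ Fourier coefficient) and hence produces a factor $O\bigl((1+\log p_m)^M p_m^{-2\sigma}\bigr)$, and then run a combinatorial Leibniz argument using the summability of $\sum_p(1+\log p)^M p^{-2\sigma}$ and the convergence of $\prod_p(1+C_1p^{-2\sigma})$. Your approach is more elementary and transparent about exactly where $\sigma>\tfrac12$ and where the vanishing of the constant Fourier mode are used, at the price of more bookkeeping; the paper's Cauchy-formula shortcut is slicker and avoids having to track the derivatives of the individual factors at all. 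Both are sound.
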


    \begin{proof}
    \begin{equation*}\begin{split}
    \mathcal{I} ( \z, \w) = &  \prod_{m=1}^n \int_0 ^1  \prod_{k=1}^K  \left( 1- \frac{ c(m,k) e^{2 \pi i \theta }}{p_m ^{\sigma+z_k}}\right)^{-1}\left( 1- \frac{ \overline{c(m,k)} e^{-2 \pi i \theta }}{p_m ^{\sigma+w_k}}\right)^{-1} d\theta \\
    = & \prod_{m=1}^n \int_0 ^1 \Bigg(\sum_{l_1 , \cdots , l_K = 0}^{\infty} \frac{ c(m,1)^{l_1} \cdots c(m,K)^{l_K}e^{ 2 \pi i \theta ( l_1 + \cdots + l_K )}  } { p_m ^{\sigma ( l_1 + \cdots + l_K ) + ( z_1 l_1 + \cdots +  z_K l_K ) }} \Bigg)  \\
     & \qquad \qquad \times \Bigg(\sum_{r_1 , \cdots , r_K = 0}^{\infty}  \frac{  \overline{c(m,1)}^{r_1} \cdots \overline{c(m,K)}^{r_K}e^{- 2 \pi i \theta ( r_1 + \cdots + r_K )}  } { p_m ^{\sigma ( r_1 + \cdots + r_K ) + ( w_1 r_1 + \cdots +  w_K r_K ) }}\Bigg)  d\theta \\
     = & \prod_{m=1}^n \left( 1+ \sum_{k=1}^\infty \sum_{\substack{ l_1 + \cdots + l_K = k \\  r_1 + \cdots + r_K = k } }      \frac{c(m,1)^{l_1} \cdots c(m,K)^{l_K}  \overline{c(m,1)}^{r_1} \cdots \overline{c(m,K)}^{r_K} }{p_m ^{2\sigma k +   ( z_1 l_1 + \cdots +  z_K l_K ) + ( w_1 r_1 + \cdots +  w_K r_K )                     }}\right).
    \end{split}\end{equation*}
    Let $0 < \epsilon < \sigma- 1/2$ and $ \| \z \|, \| \w \|  \leqslant \epsilon$. Since there are $ {\binom {K+k-1}{k}}^2$ solutions to the system of equations
    $l_1 + \cdots + l_K =    r_1 + \cdots + r_K = k $, we have
    \begin{equation*}
     | \mathcal{I} (\z, \w) | \leqslant \prod_{m=1}^\infty \left( 1+ \sum_{k=1}^\infty \frac{ 1 }{p_m ^{2k(\sigma - \epsilon)}} {\binom {K +k-1}{k}}^2   \right) \leqslant  A_K \zeta ( 2 ( \sigma - \epsilon))^{K^2} .
    \end{equation*}
 Applying Cauchy's integral formula to each variable $z_k $   and $w_k$  on 
 a circle of radius $\epsilon/K$, we obtain the result.
    \end{proof}

    \begin{lemma}\label{curve:1}
    Let $\delta$ be fixed positive number and $\y = (y_1+iy'_1, \dots , y_J +iy'_J ) \in \mathbb{C}^J$. 
    For each $j\leq J$, let   $f_j(x) = a_j x+a_{j,2}x^2 +a_{j,3}x^3+ \cdots $ be 
    a holomorphic function in $|x| < \rho_j$  whose first coefficient  $a_j$ is real.
    Define
    $$g (\y, \theta) = \sum_{j=1}^J ( y_j \R f_j (re^{2\pi i \theta}) + y'_j \I f_j (re^{2\pi i \theta})) $$
    for $\theta \in [0,1]$. Then  for $0<r< \min_j \rho_j $, we have
    $$ \left| \int_0 ^1 e^{ig (\y, \theta)} d\theta \right| \leqslant \frac{C}{\sqrt{r \| \y \| }}$$
    in the region  $| \sum_{j=1}^J a_j (y_j+i y'_j) | \geqslant \delta \| \y \|$ as $ \| \y \| \to \infty$.
    \end{lemma}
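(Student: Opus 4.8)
The plan is to read $\theta\mapsto e^{ig(\y,\theta)}$ as an oscillatory integral whose phase is, up to a genuinely lower-order perturbation, a single sinusoid of large amplitude, and then to invoke van der Corput's estimates. First I would put the phase in complex form: writing $Y_j=y_j-iy_j'$ and $z=z(\theta)=re^{2\pi i\theta}$, one has $y_j\R f_j(z)+y_j'\I f_j(z)=\R(Y_jf_j(z))$, so that $g(\y,\theta)=\R F(z)$ with $F(z)=\sum_{j=1}^J Y_jf_j(z)$. Since each $f_j(x)=a_jx+O(x^2)$ with $a_j$ real, $F(z)=\alpha z+\widetilde F(z)$, where $\alpha=\sum_j a_jY_j$ satisfies $|\alpha|=|\sum_j a_j(y_j+iy_j')|\ge\delta\|\y\|$ (using that the $a_j$ are real) and $\widetilde F(z)=\sum_j Y_j(f_j(z)-a_jz)$ is holomorphic and vanishes to order $\ge2$ at $0$. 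Thus $g(\y,\theta)=\phi_0(\theta)+\phi_1(\theta)$, with $\phi_0(\theta)=\R(\alpha z)=r|\alpha|\cos(2\pi\theta+\arg\alpha)$ a pure cosine of amplitude $r|\alpha|\ge\delta r\|\y\|$ and $\phi_1(\theta)=\R\widetilde F(z)$.

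Next I would record the derivative bounds. For the main term $\phi_0'(\theta)=-2\pi r|\alpha|\sin(2\pi\theta+\arg\alpha)$ and $\phi_0''(\theta)=-4\pi^2r|\alpha|\cos(2\pi\theta+\arg\alpha)$; as $\sin^2+\cos^2=1$, the interval $[0,1]$ can be covered by $O(1)$ arcs on each of which either $|\sin(2\pi\theta+\arg\alpha)|\ge1/\sqrt2$ throughout (so $|\phi_0'|\gg r\|\y\|$) or $|\cos(2\pi\theta+\arg\alpha)|\ge1/\sqrt2$ throughout (so $|\phi_0''|\gg r\|\y\|$, with $\phi_0''$ of one sign). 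For the correction I would write $f_j(x)-a_jx=x^2h_j(x)$ with $h_j$ holomorphic in $|x|<\rho_j$; this gives $|f_j'(x)-a_j|\le c_j(r)\,r$ and $|f_j''(x)|\le c_j(r)$ for $|x|\le r$ with $c_j(r)$ finite, and then, using $\tfrac{d}{d\theta}F(z(\theta))=2\pi i\,zF'(z)$ and $\sum_j|Y_j|\le\sqrt J\,\|\y\|$, one obtains $|\phi_1'(\theta)|\ll r^2\|\y\|$, $|\phi_1''(\theta)|\ll r^2\|\y\|$ and $\int_0^1|\phi_1''|\ll r^2\|\y\|$. Hence, once $r$ is below a threshold depending only on $\delta$, $J$ and the $f_j$, on each of the $O(1)$ arcs above one has either $|g'|\gg r\|\y\|$ throughout, or $|g''|\gg r\|\y\|$ throughout with $g''$ of one sign (the correction being absorbed by the main term), while always $\int_0^1|g''|\le\int_0^1(|\phi_0''|+|\phi_1''|)\ll r\|\y\|$.

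Finally I would apply van der Corput arc by arc. On an arc where $|g''|\ge\lambda\gg r\|\y\|$ with $g''$ of one sign, the second-derivative test gives $|\int e^{ig}\,d\theta|\ll\lambda^{-1/2}\ll(r\|\y\|)^{-1/2}$; on an arc $[a,b]$ where $|g'|\ge\lambda\gg r\|\y\|$, one integration by parts gives $\int_a^b e^{ig}\,d\theta=[\,e^{ig}/(ig')\,]_a^b+\int_a^b e^{ig}\,g''/(i(g')^2)\,d\theta$, so its modulus is at most $2/\lambda+\lambda^{-2}\int_0^1|g''|\ll(r\|\y\|)^{-1}$. Summing over the boundedly many arcs (the asserted bound being automatic when $r\|\y\|\le1$ since the integral has modulus $\le1$, and $r\|\y\|\to\infty$ as $\|\y\|\to\infty$ for fixed $r>0$) yields $|\int_0^1 e^{ig(\y,\theta)}\,d\theta|\ll(r\|\y\|)^{-1/2}$, as required. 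The main obstacle is the step in the second paragraph: one must be sure that the nonlinear part $\phi_1$ of the phase is genuinely of lower order than the leading sinusoid $\phi_0$. This is exactly where the hypotheses that each $f_j$ has the form $a_jx+O(x^2)$ (so that $\phi_1'$, $\phi_1''$ each gain a factor $r$ over $\phi_0'$, $\phi_0''$) and that $|\alpha|\ge\delta\|\y\|$ enter, and it is what confines this clean argument to $r$ small compared with $\delta$ and the $f_j$.
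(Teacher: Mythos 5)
Your proof is correct and follows essentially the same route as the paper's: decompose the phase into the leading sinusoid $r|\alpha|\cos(2\pi\theta+\arg\alpha)$ plus an $O(r^{2}\|\y\|)$ remainder, cut $[0,1]$ into $O(1)$ arcs on each of which either $|g'|\gg r\|\y\|$ or $|g''|\gg r\|\y\|$ with $g''$ single-signed, and apply the van der Corput first- and second-derivative tests (the paper cites Lemmas 4.2 and 4.4 of Titchmarsh for exactly this). Your complex reformulation $g=\R F(z)$, $F=\alpha z+\widetilde F$, is just a tidier way of recording the paper's expansion, and like the paper you correctly note the implicit requirement that $r$ be small enough for the perturbation to be absorbed.
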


    \begin{proof}
    We have
    \begin{equation*}\begin{split}
    g (\y ,  \theta ) & =  r \left( \sum_{j=1}^J a_j  y_j \cos ( 2 \pi \theta) +  a_j y'_j \sin ( 2 \pi \theta ) \right)+O(r^2 \| \y \|) \\ & = r \left| \sum_{j=1}^J a_j (y_j+iy'_j ) \right| \cos ( 2 \pi (\theta - \xi ) )+O(r^2 \| \y \| )
    \end{split}\end{equation*}
    for some $\xi$. Now, if $g ( \y, \theta )=0$, then
    $$ \theta = \xi \pm \frac14 + O\bigg( \frac{\| \y \| r}{|\sum_{j=1}^J a_j (y_j+iy'_j ) |}\bigg)
    = \xi  \pm \frac{1}{4} + O(r ), $$
    where $| \sum_{j=1}^J a_j (y_j+iy'_j ) | \geqslant \delta \| \y \|$. Similarly, if $g'( \y , \theta')=0$, then $\theta' =  \xi  + O(r )$ or $\xi  +1/2+O(r )$, and if $g''(\y, \theta'')=0$, then $\theta'' = \xi \pm \frac{1}{4} + O(r ) $. When $\theta$ is close to $\xi$, $ g''(\y, \theta) \neq 0$. Thus, $ g'(\y, \theta)$ has only two zeros modulo 1. A similar argument  argument shows that  $g''(\y, \theta)$  also has only two zeros modulo 1.

    Let $I_k = [ \xi + (2k-1)/ 8 , \xi + (2k+1)/8]$ for $k=1,2,3,4$. Then, we have
    \begin{equation*}\begin{split}
    \left| \int_{I_1 \cup I_3 } e^{i g(\y, \theta)} d\theta \right| & \ll  \frac{1}{ \min \{ |g'(\y, \theta)| : \theta \in I_1 \cup I_3 \} } \ll \frac{1}{r|\sum_{j=1} ^J a_j ( y_j+iy'_j) |}, \\
    \left| \int_{I_2 \cup I_4 } e^{i g( \y ,\theta)} d\theta \right| & \ll  \frac{1}{ \min \{ \sqrt{ |g''(\y, \theta)|} : \theta \in I_2 \cup I_4 \} } \ll \frac{1}{\sqrt{r|\sum_{j=1} ^J a_j ( y_j +iy'_j ) |}},
    \end{split}\end{equation*}
    by Lemmas 4.2 and 4.4 of \cite{Ti}. Adding these estimates, we obtain  the result.
    \end{proof}

    The next lemma shows that prime ideals are equally distributed in each ideal class.
    \begin{lemma}\label{tauber:1}
    For any ideal class $\mathcal{C}$ of $\mathbb{Q}(\sqrt{D})$, we have
    $$    \sum_{ \substack{ \p \in \mathcal{C} \\ \N(\p) \leqslant x \\ \N( \p)  \in P_2 }} 1 = \frac{1}{h(D)} \frac{x}{\log x} + o \left( \frac{x}{\log x} \right) $$
    as $x \rightarrow \infty$; here $P_2$ is the set of rational primes that split completely in 
    $\mathbb{Q}(\sqrt{D})$.
    \end{lemma}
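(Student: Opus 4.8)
Here is the plan. The claim is a prime ideal theorem for $K=\mathbb{Q}(\sqrt{D})$ refined to ideal classes and restricted to degree-one (split) primes, and I would deduce it from the analytic properties of the Dedekind zeta function $\zeta_K(s)$ and of the Hecke $L$-functions $L(s,\chi)$, $\chi$ a character of the ideal class group $\mathrm{Cl}(K)$, via character orthogonality.

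First, by orthogonality of the characters of $\mathrm{Cl}(K)$, the indicator of the event $\p\in\mathcal{C}$ equals $h(D)^{-1}\sum_{\chi}\bar\chi(\mathcal{C})\,\chi(\p)$, so that
$$ \sum_{\substack{\p\in\mathcal{C},\ \N(\p)\leqslant x\\ \N(\p)\in P_2}} 1 \;=\; \frac{1}{h(D)}\sum_{\chi}\bar\chi(\mathcal{C})\,S_\chi(x), \qquad S_\chi(x):=\sum_{\substack{\N(\p)\leqslant x\\ \N(\p)\in P_2}}\chi(\p). $$
It therefore suffices to prove $S_{\chi_0}(x)=\tfrac{x}{\log x}+o\!\left(\tfrac{x}{\log x}\right)$ for the principal character $\chi_0$ (for which $L(s,\chi_0)=\zeta_K(s)$), and $S_\chi(x)=o\!\left(\tfrac{x}{\log x}\right)$ for every nontrivial $\chi$.

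Second, for each $\chi$ I would pass to a von Mangoldt–type sum. Writing $\Lambda_K$ for the von Mangoldt function of $K$ and $\psi_K(x,\chi)=\sum_{\N(\n)\leqslant x}\Lambda_K(\n)\chi(\n)$, the contribution of higher prime powers $\p^k$ ($k\geqslant2$), of the inert primes (norm $p^2$), and of the ramified primes (norm dividing $D$) is $O(\sqrt{x}\log x)$; hence $\psi_K(x,\chi)=\sum_{\N(\p)\leqslant x,\ \N(\p)\in P_2}\chi(\p)\log\N(\p)+O(\sqrt{x}\log x)$, and partial summation converts an asymptotic for $\psi_K(x,\chi)$ into one for $S_\chi(x)$ with error still $o(x/\log x)$. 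Now $-L'/L(s,\chi)=\sum_{\n}\Lambda_K(\n)\chi(\n)\N(\n)^{-s}$ for $\R s>1$. Since $\zeta_K(s)$ is holomorphic and nonzero on $\R s\geqslant1$ apart from its simple pole at $s=1$, and each $L(s,\chi)$ with $\chi\neq\chi_0$ is entire and nonvanishing on $\R s\geqslant1$ (the classical Hecke–Landau nonvanishing on the one-line, proved by the usual $3+4\cos\vartheta+\cos2\vartheta$ inequality as for Dirichlet $L$-functions), the Wiener–Ikehara Tauberian theorem gives $\psi_K(x,\chi_0)\sim x$ and $\psi_K(x,\chi)=o(x)$ for $\chi\neq\chi_0$. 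Feeding these back through the previous reductions yields $S_{\chi_0}(x)\sim x/\log x$ and $S_\chi(x)=o(x/\log x)$, and summing over $\chi$ completes the proof.

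The only genuinely nontrivial analytic input is the nonvanishing of $\zeta_K$ and of the $L(s,\chi)$ on the line $\R s=1$, which is classical and can be quoted; after that, Wiener–Ikehara does all the work, and no zero-free region or effective error term is needed since only an $o(\cdot)$ estimate is claimed. The main place to be careful is the elementary bookkeeping isolating the degree-one split primes: one must check that inert primes, ramified primes, and higher prime powers contribute only $O(\sqrt{x}\log x)$ to the weighted sum (so restricting to $P_2$ is harmless) and that the passage $\psi_K\to S_\chi$ via partial summation does not inflate the error past $o(x/\log x)$. Equivalently, one may invoke the Chebotarev density theorem for the Hilbert class field $H/K$, whose Galois group is $\mathrm{Cl}(K)$ and whose Artin map sends a prime $\p$ to its ideal class; since $H/K$ is abelian this carries exactly the same content as the Hecke $L$-function argument.
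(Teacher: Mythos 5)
Your argument is correct and is precisely the standard Tauberian argument that the paper alludes to: the paper simply states ``This lemma can be proved by Tauberian theorems. Since it is quite standard, we omit the proof,'' and your write-up (orthogonality of class group characters, reduction to $\psi_K(x,\chi)$ after discarding inert, ramified, and higher prime-power contributions, non-vanishing of $L(s,\chi)$ on $\R s=1$, Wiener--Ikehara, and partial summation) fills in exactly those details. The bookkeeping you flag as the place to be careful is indeed the only nontrivial check, and you handle it correctly.
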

    This lemma can be proved by Tauberian theorems. Since it is quite standard, we omit the proof. The lemma following Definition \ref{def:1} is known as Kronecker's theorem. See $\S A.8 $ \cite{KV} for the proof.

    \begin{defn}\label{def:1}
    Let $\gamma : [0,\infty) \to \mathbb{R}^N $ be continuous. We say that the curve $\gamma(t)$ is uniformly distributed mod 1 in $\mathbb{R}^N$ if the following relation holds for every parallelepiped $\Pi = [\alpha_1,\beta_1 ]\times \cdots \times [\alpha_N, \beta_N ]$, $0 \leqslant  \alpha_j <\beta_j \leqslant 1$ for $j = 1, \dots , N$:
    $$  \lim _{T\rightarrow \infty} \frac{1}{T} | \{ t \in [0,T]: \gamma(t) \in \Pi \mod 1 \}| = \prod_{j=1}^N (\beta_j -\alpha_j ).$$
    Here, $ \gamma(t) \in \Pi \mod 1 $ means that $ \gamma(t) - \y \in \Pi$ for some $ \y \in \mathbb{Z}^N$.
    \end{defn}

    \begin{lemma}\label{krt:1}
    Suppose that the curve $\gamma(t)  \in \mathbb{R}^N$ is uniformly distributed mod 1 in $\mathbb{R}^N$,   that $\F(\x)$ is Riemann integrable on the unit cube $[0,1]^N$, and $\F(\x)= \F(\x+\m)$ for any $ \x \in \mathbb{R}^N$ and $ \m \in \mathbb{Z}^N$. Then 
    $$ \lim_{T \to \infty} \frac{1}{T} \int_0 ^T \F(  \gamma (t)   ) dt = \int_0 ^1 \cdots \int_0 ^1 \F(x_1 , \dots, x_N ) dx_1 \cdots dx_N .$$
    In particular, we can choose $\gamma(t)= \gamma_N (t) = ( - (t \log p_1) /(2 \pi) , \dots, -(t\log p_N ) / ( 2 \pi) )$.
    \end{lemma}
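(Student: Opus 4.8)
The plan is to prove the lemma by the classical three-step reduction: first for indicators of boxes, then for step functions by linearity, then for a general periodic Riemann integrable $\F$ by sandwiching. For the base case, if $\F = \mathbf{1}_\Pi$ is the indicator of a parallelepiped $\Pi \subseteq [0,1]^N$ extended periodically, then
$$\frac1T\int_0^T \F(\gamma(t))\,dt = \frac1T\big|\{t\in[0,T]:\gamma(t)\in\Pi\bmod 1\}\big| \longrightarrow \prod_{j=1}^N(\beta_j-\alpha_j) = \int_{[0,1]^N}\F(\x)\,d\x,$$
which is exactly the definition of uniform distribution mod $1$. By linearity this extends to every finite real linear combination of such indicators, i.e.\ to every step function that is constant on the cells of a rectangular grid.

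Next, since $\F$ is bounded and Riemann integrable on $[0,1]^N$, for each $\epsilon>0$ I would choose a grid fine enough that there are step functions $\underline\F \le \F \le \overline\F$, constant on the grid cells, with $\int_{[0,1]^N}(\overline\F-\underline\F)\,d\x<\epsilon$ (take on each cell the infimum and supremum of $\F$, assigning each shared face to a single cell). Then for every $T$,
$$\frac1T\int_0^T\underline\F(\gamma(t))\,dt \le \frac1T\int_0^T\F(\gamma(t))\,dt \le \frac1T\int_0^T\overline\F(\gamma(t))\,dt,$$
and letting $T\to\infty$ with the step-function case applied to $\underline\F$ and $\overline\F$ gives
$$\int_{[0,1]^N}\underline\F \le \liminf_{T\to\infty}\frac1T\int_0^T\F(\gamma(t))\,dt \le \limsup_{T\to\infty}\frac1T\int_0^T\F(\gamma(t))\,dt \le \int_{[0,1]^N}\overline\F.$$
Because $\int\underline\F \le \int\F \le \int\overline\F$ and $\int\overline\F-\int\underline\F<\epsilon$, both the $\liminf$ and the $\limsup$ lie within $\epsilon$ of $\int_{[0,1]^N}\F$; as $\epsilon$ is arbitrary the limit exists and equals $\int_{[0,1]^N}\F$.

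For the specific curve $\gamma_N(t)=(-(t\log p_1)/(2\pi),\dots,-(t\log p_N)/(2\pi))$ I would verify uniform distribution mod $1$ by the continuous Weyl criterion: $\gamma(t)$ is u.d.\ mod $1$ if and only if $\frac1T\int_0^T e^{2\pi i\,\m\cdot\gamma(t)}\,dt\to 0$ for every $\m\in\mathbb{Z}^N\setminus\{0\}$, which itself follows from the approximation scheme above run in reverse (approximating the indicator of a box uniformly, up to a small-volume error, by trigonometric polynomials built from Fej\'er kernels). For $\gamma_N$ we have $\m\cdot\gamma_N(t)=-\frac{t}{2\pi}\sum_{j}m_j\log p_j=-\frac{t}{2\pi}\log\big(\prod_{j}p_j^{m_j}\big)$, and by unique factorization $\prod_j p_j^{m_j}\neq 1$ when $\m\neq 0$, so $\lambda:=\log\prod_j p_j^{m_j}\neq 0$ and $\frac1T\int_0^T e^{-i\lambda t}\,dt=\frac{1-e^{-i\lambda T}}{i\lambda T}\to 0$. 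Hence $\gamma_N$ is uniformly distributed mod $1$ and the first part applies.

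The only genuinely delicate point is the treatment of cell boundaries: the hypothesis is stated for closed parallelepipeds, so a naive sum of cell indicators over a grid double-counts shared faces. I would handle this by sandwiching each closed cell between a slightly smaller and a slightly larger closed box whose volumes differ by $O(N/n)$, with $n$ the number of subdivisions per side, so that the time average of each cell indicator still converges to the cell's volume; since there are only finitely many cells this introduces an error that vanishes as $n\to\infty$ and does not affect the sandwich argument. (One should also note that $\F\circ\gamma$ need not be assumed Lebesgue measurable in advance: it is trapped between the measurable functions $\underline\F\circ\gamma$ and $\overline\F\circ\gamma$ whose integrals differ by less than $\epsilon$, so the integrals appearing above are unambiguous.) Everything else is routine.
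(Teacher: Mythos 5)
Your proof is correct, and it is the standard argument. The paper does not in fact prove this lemma: it remarks that the result is Kronecker's theorem and refers to Karatsuba and Voronin \cite{KV}, \S A.8, for the proof, which is exactly the three-step reduction you give (indicators of boxes via the definition of uniform distribution mod $1$, step functions by linearity, sandwiching a bounded Riemann-integrable periodic $\F$), together with the continuous Weyl criterion and unique factorization for the specific curve $\gamma_N$. You have also correctly identified and handled the two points that genuinely need care: assigning shared faces to a unique cell so the cell indicators partition $[0,1]^N$, and sandwiching each closed cell between slightly shrunken and enlarged boxes so that faces have vanishing time-density.

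One small addendum worth recording: being trapped between $\underline\F\circ\gamma$ and $\overline\F\circ\gamma$ pins down the upper and lower Lebesgue integrals of $\F\circ\gamma$ to within $\epsilon$, but it does not by itself make $\F\circ\gamma$ measurable, which is what one needs for the expression $\frac1T\int_0^T\F(\gamma(t))\,dt$ to denote a single number. This is harmless in the paper's applications (there $\F$ is built from $\log L_n(\sigma,\chi;\bth)$ and is continuous off a Lebesgue-null set, so $\F\circ\gamma$ is measurable for the continuous curve $\gamma$), but for the lemma as stated in full Riemann-integrable generality one should either phrase the conclusion in terms of upper and lower integrals, or observe that $\F$ may be replaced by a Borel modification agreeing with it off a null set without changing either side of the claimed identity.
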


\section{Proof of Theorems~\ref{thm:1} and~\ref{co:1}}

As we mentioned in $\S1$, the proof of Theorem~\ref{thm:1} splits naturally into two parts: the case when 
$E(s, Q)$  is a sum of exactly two $L$-funtions, and when it is a sum of more than two.
We handle these two cases separately in $\S3.1$ and $\S3.2$. 
 In both cases we require the following result, which  is Theorem 1 in \cite{BJ}.
 Note that the definition of the Jensen function here is slightly different from our previous one.
    
    \begin{thrm}\label{bjt:1}
        Let $-\infty \leqslant \alpha < \alpha_0 <\beta_0 <\beta \leqslant \infty$ and let $f_1 (s), f_2 (s), \dots $ be a sequence of functions almost periodic in $[\alpha, \beta]$ converging uniformly in $[\alpha_0 , \beta_0]$ to  a function $f(s)$. Suppose that none of the $f_n$'s is identically zero,  that $f(s)$ has an analytic continuation to  the half strip $\alpha < \sigma < \beta$, $t >t_0$, and, finally,  that $f_n (s)$ converges in mean 
 to  $f(s)$ in $[\alpha, \beta]$ for some index $p>0$. For  any $T_0>t_0$, define the Jensen function 
of $f(s)$ to be
  $$ 
  \varphi_f (\sigma) = \lim_{T    \rightarrow \infty} \frac{1}{T-T_0} \int_{T_0} ^{T } \log |f(\sigma+it)|dt.
  $$
 (Note that this is independent of the choice of $T_0$.)
 Then the convergence is uniform  in the interval $[\alpha, \beta]$,  
  $\varphi_{f_n}(\sigma)$ converges uniformly to $\varphi_f (\sigma)$ in $[\alpha,\beta]$  as $n\rightarrow \infty$, and   $\varphi_f (\sigma)$ is convex in $(\alpha, \beta)$.
        For every strip $(\sigma_1 , \sigma_2 )$, where $\alpha< \sigma_1 < \sigma_2 < \beta$, and for $T_0 > t_0$, the two relative frequencies of zeros defined by
        $$ \underline{H}_f (\sigma_1 , \sigma_2 ) = \liminf_{T \rightarrow \infty}\frac{ N_f (\sigma_1 ,\sigma_2 ;T_0, T)}{T-T_0},$$
        $$ \overline{H}_f (\sigma_1 , \sigma_2 ) = \limsup_{T \rightarrow \infty}\frac{ N_f (\sigma_1 ,\sigma_2 ;T_0, T)}{T-T_0},$$
        where $N_f (\sigma_1, \sigma_2; T_0, T)$ denotes the number of zeros of $f(s)$ in the rectangle $\sigma_1 < \sigma < \sigma_2 $, $T_0 < t < T$, satisfy the inequalities
        \begin{equation} \label{kre:1}
            \frac{1}{2\pi} ( \varphi'_f (\sigma_2 - )- \varphi'_f (\sigma_1 +)) \leqslant \underline{H}_f (\sigma_1 , \sigma_2 ) \leqslant \overline{H}_f (\sigma_1, \sigma_2 ) \leqslant \frac{1}{2\pi } ( \varphi'_f (\sigma_2 + )- \varphi'_f (\sigma_1 -)).
        \end{equation}
    \end{thrm}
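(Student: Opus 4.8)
The plan is to reduce to the theory of the zeros of a \emph{single} almost periodic function (Jessen, Jessen--Tornehave), apply it to each approximant $f_n$, and then pass to the limit $n\to\infty$, using the two hypotheses on the $f_n$ in complementary roles: uniform convergence on the inner strip $[\alpha_0,\beta_0]$ locates the zeros of $f$ (and makes $f$ almost periodic there, being a uniform limit of almost periodic functions), while convergence in mean of index $p$ on the full strip $[\alpha,\beta]$ controls the \emph{size} of $\log|f_n|$ in mean and is what forces the Jensen functions to converge. The analytic-continuation hypothesis enters only to make $N_f(\sigma_1,\sigma_2;T_0,T)$ finite and well defined in the half strip $t>t_0$.

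\emph{Step 1: one almost periodic function.} Fix $n$ and put $g=f_n$; since $g$ is almost periodic and $g\not\equiv0$ it is bounded on every closed sub-strip, $\log|g|$ is subharmonic there, and $\tfrac1{2\pi}\Delta\log|g|=\sum_\rho\delta_\rho$ (sum over zeros with multiplicity). By Jessen's theorem, $\varphi_g(\sigma)=\lim_T\tfrac1{T-T_0}\int_{T_0}^T\log|g(\sigma+it)|\,dt$ exists, is independent of $T_0$, converges uniformly on $[\alpha,\beta]$, and is convex (the $t$-average kills $\partial_t^2\log|g|$ and leaves $\varphi_g''\ge0$). For the zero counts apply Green's theorem to $\log|g|$ on the rectangle $(\sigma_1,\sigma_2)\times(T_0,T)$; using the Cauchy--Riemann identity $\partial_t\log|g|=-\partial_\sigma\arg g$ on the two horizontal sides one obtains
$$2\pi N_g(\sigma_1,\sigma_2;T_0,T)=\int_{T_0}^T\Bigl(\tfrac{\partial}{\partial\sigma}\log|g(\sigma_2+it)|-\tfrac{\partial}{\partial\sigma}\log|g(\sigma_1+it)|\Bigr)\,dt+E_g(T),$$
where $E_g(T)$ is the net variation of $\arg g$ along the horizontal sides; for an almost periodic $g$ this is $O(\log T)$ outside a set of $T$ of density zero, hence $E_g(T)=o(T)$ after a routine limiting argument. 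Dividing by $T-T_0$ and letting $T\to\infty$: since $\tfrac1{T-T_0}\int_{T_0}^T\log|g(\sigma+it)|\,dt$ is convex in $\sigma$ up to boundary terms that are $o(1)$ (because $\log|g|$ is subharmonic) and converges uniformly to $\varphi_g$, its $\sigma$-derivative converges to $\varphi_g'(\sigma_i)$ at the (dense) differentiability points of $\varphi_g$; since $N_g$ is monotone in each of $\sigma_1,\sigma_2$, sandwiching between slightly larger and slightly smaller intervals with such endpoints yields \eqref{kre:1} for $g$, the gap between the one-sided derivatives accounting for zeros lying exactly on $\R s=\sigma_1$ or $\R s=\sigma_2$.

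\emph{Step 2: the limit and the conclusion for $f$.} The heart is to show that $\psi_T(\sigma):=\tfrac1{T-T_0}\int_{T_0}^T\log|f(\sigma+it)|\,dt$ has a limit $\varphi_f(\sigma)$ as $T\to\infty$ for every $\sigma\in[\alpha,\beta]$, uniformly in $\sigma$, with $\varphi_{f_n}\to\varphi_f$ uniformly. On $[\alpha_0,\beta_0]$ this is the continuity of the Jensen function under uniform convergence of almost periodic functions: away from the discrete zeros of $f$, $\log|f_n|\to\log|f|$ uniformly, while near each zero Hurwitz's theorem gives $f_n$ the same number of zeros with multiplicity for large $n$, so Jensen's formula makes the local $\log^-$-masses converge. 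To reach the rest of $[\alpha,\beta]$, one shows that $\tfrac1{T-T_0}\int_{T_0}^T\log|f_n(\sigma+it)|\,dt\to\varphi_{f_n}(\sigma)$ \emph{uniformly in $n$}: the elementary bound $\log^+x\le x^p/p$ lets the convergence in mean of index $p$ control the positive part (passed to individual $\sigma$ by Fubini along a subsequence and upgraded by convexity), and subharmonicity together with the uniform convergence on $[\alpha_0,\beta_0]$ controls the negative part uniformly in $n$. Hence the convex functions $\varphi_{f_n}$ are uniformly Cauchy on $[\alpha,\beta]$; their limit is convex and equals $\lim_T\psi_T$, so $\varphi_f$ exists on $[\alpha,\beta]$ and both convergences are uniform. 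Finally, \eqref{kre:1} for $f$ follows by repeating the Green's-theorem computation of Step 1 \emph{directly for $f$} --- legitimate since $f$ is analytic in the half strip --- using that $\tfrac1{T-T_0}\int_{T_0}^T\partial_\sigma\log|f(\sigma_i+it)|\,dt\to\varphi_f'(\sigma_i)$ at differentiability points of $\varphi_f$ (exactly as in Step 1, from $\psi_T\to\varphi_f$ and the near-convexity of $\psi_T$) and that the net horizontal argument variation of $f$ is $o(T)$; the monotonicity of $N_f$ and the one-sided derivatives of $\varphi_f$ then give \eqref{kre:1}.

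\emph{Main obstacle.} The delicate point is the lower bound $\liminf_n\varphi_{f_n}(\sigma)\ge\varphi_f(\sigma)$ off the inner strip: uniform convergence $f_n\to f$ is unavailable there, and $\log|f|$ is unbounded below near the genuinely present zeros of $f$, so a priori the mean of $\log|f_n|$ could sink below $\varphi_f(\sigma)$ if the zeros of $f_n$ clustered. The convergence in mean of index $p$ is exactly what prevents this, by furnishing a uniform non-concentration bound on the negative part; making this interact cleanly with the subharmonicity of $\log|f_n|$, and then running the Green's-theorem identity for $f$ in the region where $f$ is \emph{not} almost periodic (so that the $E_f(T)=o(T)$ estimate and the limit $\psi_T'\to\varphi_f'$ must be argued by hand), is the technical heart of the argument, and is where I would follow the bookkeeping of \cite{BJ} most closely.
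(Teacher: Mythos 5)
The paper does not prove this statement: it is quoted verbatim as Theorem 1 of Borchsenius and Jessen \cite{BJ}, and the present paper cites that reference without reproducing the argument. There is therefore no proof in the paper to compare against. For what it is worth, your sketch does trace the architecture of the \cite{BJ}/Jessen--Tornehave argument: Step 1 is the classical theory for a single almost periodic $f_n$ (argument-principle zero counting, convexity of the Jensen function, and the relation between its one-sided derivatives and the relative frequency of zeros), and Step 2 passes to the limit using convergence in mean of index $p$, subharmonicity of $\log|f_n|$, convexity of the $\varphi_{f_n}$, and the uniform convergence on the inner strip $[\alpha_0,\beta_0]$ to anchor the Jensen functions. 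You correctly isolate the delicate point: obtaining uniform control of $\log^-|f_n|$ off the inner strip, where uniform convergence of $f_n\to f$ is unavailable, with the $\log^+$ bound supplied by mean convergence plus the convexity of $\varphi_{f_n}$ (the extended-secant inequality) furnishing the compensating lower bound. However, the submission explicitly defers the bookkeeping of precisely that step to \cite{BJ}, and the justification that the net horizontal argument variation is $o(T)$ is asserted rather than derived; so what you have is an accurate outline of the \cite{BJ} proof rather than a self-contained argument. Given that the paper itself treats the result as a black box imported from \cite{BJ}, that level of detail is consistent with the paper's own posture, but it should not be read as an independent proof.
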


\subsection{Sums of two Hecke $L$-functions}

When $h(D)= 2$ or $3$,  $E(s,Q)$ is a linear combination of two Hecke $L$-functions. 
We assume this to be the case and write $E(s, Q) = a_1 L(s, \chi_1 ) + a_2 L(s, \chi_2 )$, where $\chi_1$ is the principal character, $\chi_2 \neq \chi_1 $, and neither $a_i$ is $0$. By Proposition \ref{pro:2} and Theorem \ref{bjt:1},  for any fixed $ 1/2 < \sigma_1 < \sigma_2 $, the Jensen functions $ \varphi_{E_n}$ and $\varphi_{L_n } $, where $E_n (s,Q) $ and $L_n (s,\chi_j) $ are defined in Proposition \ref{pro:2}, converge uniformly in $[\sigma_1, \sigma_2]$ to  $\varphi_E$ and $\varphi_L$, respectively. By direct calculation, we see that
    $$\varphi_{L_n}(\sigma) = \R  \left[ \lim_{T \rightarrow \infty} \frac{1}{T} \int_1 ^T \log L_n (\sigma+it, \chi)dt \right] =  0,$$
      so that $\varphi_L ( \sigma ) = 0 $ for $ \sigma > 1/2$. Thus,
    \begin{equation}\label{jee:1} \begin{split}
    \varphi_E (\sigma) = & \lim_{T \rightarrow \infty} \frac{1}{T} \int_1 ^T \log \left| \frac{L(\sigma+it, \chi_2 )}{L(\sigma+it, \chi_1 )}+ a \right| dt + \log |a_2| =\varphi_{h+a} (\sigma ) + \log |a_2 |, \\
    \varphi_{E_n} (\sigma) = & \lim_{T \rightarrow \infty} \frac{1}{T} \int_1 ^T \log \left| \frac{L_n (\sigma+it, \chi_2 )}{L_n (\sigma+it, \chi_1 )}+ a \right| dt + \log |a_2|= \varphi_{h_n +a} (\sigma ) + \log |a_2 |,
    \end{split}\end{equation}
    where $h( s) = L( s, \chi_2 ) / L (s , \chi_1 )$, $h_n( s) = L_n( s, \chi_2 ) / L_n ( s , \chi_1 )$, and $ a = a_1 a_2 ^{-1}$. We want to show that $\varphi_E $ (equivalently $\varphi_{h+a }$) has a continuous derivative for $\sigma > 1/2$. For then, by (\ref{kre:1}),  we would have
    \begin{equation}\label{kre:2}
    N_E (\sigma_1 , \sigma_2 ; 0,T)= \frac{T}{2 \pi}(\varphi'_E(\sigma_2)-\varphi'_E(\sigma_1 )) + o(T).
    \end{equation}

    Let $\mu_{n,\sigma}$ be the asymptotic distribution function of $h_n(\sigma+it)$ with respect to $|h'_n(\sigma+it)|^2$. Since $h_n (s)$ is almost periodic in $ \sigma>0$,   Proposition \ref{pro:1} implies that
    \begin{equation}\label{jee:2}
    \varphi''_{h_n -x}(\sigma) = 2 \pi H_{n, \sigma}(x),
    \end{equation}
    provided that $\mu_{n,\sigma} $ is   absolutely continuous for each $\sigma>0$ and  each $n$, and that  
      its density $H_{n, \sigma}(x)$ is continuous in $x$ and $\sigma$. Define 
      \begin{equation}\label{def:L,theta}
      L_n (\sigma, \chi_j ; \bth) = \prod_{m \leqslant n} f_{m, \sigma, j}(\theta_m )  ,
      \end{equation}
       where $ \bth = ( \theta_1 , \dots, \theta_n )$ and
    \begin{equation}\label{def:f}
f_{m, \sigma, j}( \theta ) =
\begin{cases}
 ( 1- \chi_j (\p_m) p_m ^{-\sigma} e^{2 \pi i\theta})^{-1}    & if\ \    p_m = \p_m ^2 , \cr
 (1-  p_m ^{-2\sigma }e^{4 \pi i \theta})^{-1}                   & if\ \    p_m = \p_m ,     \cr
 (1-  2  (\R \chi_j ( \p_m )) p_m ^{-\sigma}  e^{2\pi i \theta } +  p_m ^{- 2 \sigma}e^{4 \pi i \theta })^{-1} & if\ \  p_m = \p_m \p'_m , \p_m \neq \p'_m .\cr
\end{cases}
\end{equation}
   Then  we have $L_n (\sigma+it, \chi_j ) = L_n (\sigma, \chi_j ; \gamma_n (t))$, where $ \gamma_n (t)$ is defined in Lemma \ref{krt:1}. By Lemma \ref{krt:1} and (\ref{jee:1}), we have
    \begin{equation*}
    \varphi_{E_n} (\sigma ) = \int_{I^n} \log \left| \frac{L_n (\sigma, \chi_2; \bth)}{L_n (\sigma, \chi_1 ; \bth)} + a \right| d\bth + \log | a_2 |,
    \end{equation*}
    where $I^n = [0,1]^n $, $\bth = (\theta_1 , \theta_2, \dots, \theta_n )$, and $  d \bth = d\theta_1 \cdots d\theta_n$. 
    
    Next we define a distribution function by
    \begin{equation}\label{def:nu}
    \nu_{n, \sigma}(B) = \int_{h^{-1}_{n, \sigma}(B)} \left| h'_{n, \sigma}(\bth) \right|^2 d\bth,
    \end{equation}
    where  $ h_{n, \sigma}(\bth)= L_n (\sigma, \chi_2 ; \bth) / L_n (\sigma, \chi_1 ; \bth)$, and 
%    Note that $\nu_{n, \sigma}(B)$ is analogous to the distribution function of 
%    $ h_{n, \sigma}(\bth)$  with respect to $ \left| h'_{n, \sigma}(\bth) \right|^2$ defined in \eqref{}
   let  $G_{n,\sigma}$ denote  its density function. Here $h_{n, \sigma}'
   = \frac{\partial}{\partial \sigma}h_{n, \sigma}$. 
   Observe that by Lemma \ref{krt:1},
    \begin{equation*}\begin{split}
    \widehat{\mu_{n, \sigma}} (y)= & \int_{\mathbb{C}} e^{i x\cdot y} d\mu_{n, \sigma}(x)
    =  \lim_{ T \to \infty} \frac{1}{T} \int_1 ^T  e^{i h_n (\sigma+it)\cdot y }|h'_n (\sigma+it)|^2 dt \\
    = & \int_{I^n} e^{i h_{n, \sigma}(\bth)\cdot y} |h'_{n,\sigma}(\bth)|^2 d\bth
    =  \widehat{\nu_{n, \sigma}}(y),
    \end{split}\end{equation*}
     where $ x \cdot y = \R x  \R y + \I x  \I y$ for $ x,y \in \mathbb{C}$.
  It follows that
      \begin{equation}\label{jee:3}
    \mu_{n, \sigma} = \nu_{n, \sigma} \quad \mathrm{and} \quad H_{n,\sigma} = G_{n,\sigma}.
    \end{equation}     
 Similarly, for $g_{n, \sigma}(\bth) = \log L_n (\sigma, \chi_2 ; \bth) - \log L_n (\sigma, \chi_1 ; \bth)$ 
% and let $\lambda_{n, \sigma} (B)$ denote the distribution function of $g_{n,\sigma}$ with respect to $\left| g'_{n, \sigma} \right|^2$, where $ g'_{n, \sigma} = \frac{\partial}{\partial \sigma}g_{n, \sigma}$. That is,
we define
    \begin{equation}\label{def:lambda}
    \lambda_{n, \sigma}(B)= \int_{g_{n, \sigma}^{-1}(B)} \left|  g'_{n, \sigma}(\bth) \right|^2 d\bth .
    \end{equation}

By applying Theorems 5--9 of \cite{BJ} to $\lambda_{n,\sigma}$ and $ \nu_{n,\sigma}$ 
in a straightforward way, we obtain the following two theorems.
    \begin{thrm}\label{bjt:2}
    For any $\sigma>0$, the distribution functions $\lambda_{n, \sigma}$ are absolutely continuous for $n \geqslant n_0$ with continuous densities $F_{n, \sigma}(x)$ which possess continuous partial derivatives of order less than $q$ for $n\geqslant n_q$.
    If $\sigma>1/2$, $\lambda_{n, \sigma}$ converges as $n \to \infty$ to some distribution function $\lambda_\sigma$ which is absolutely continuous with continuous density $F_\sigma (x)$ possessing continuous partial derivatives of arbitrarily high order. The functions $F_{n, \sigma}(x)$ and their partial derivatives converge uniformly to $F_\sigma (x)$ and its partial derivatives as $n \to \infty$. 
    If $1/2 < \sigma \leqslant 1$, then $F_\sigma(x) >0$ for all $x$. If $1/2<\sigma <1$, then $F_\sigma( x_1 + i x_2 )$ is an entire 
    function of the two variables $x_1$ and $x_2$.  The distribution functions all depend continuously on $\sigma$, and their densities and the partial derivatives of the densities are continuous functions of $x$ and $\sigma$ together. Further, if $\frac{1}{2}< \alpha <\beta$, the convergence of $F_{n,\sigma}(x)$ and their partial derivatives is uniform in $x$ and $\sigma$ together for all $x$ and $\alpha \leqslant \sigma \leqslant \beta$. If $c >0$ is arbitrary and $1/2< \alpha<\beta $, the functions $ F_\sigma (x)$ and $ F_{n, \sigma}(x) $, $n\geqslant n_0$ have a majorant of the form $K_0 e^{- c |x|^2}$ for $\alpha \leqslant \sigma \leqslant \beta$, and for every $q$ the partial derivatives of $F_\sigma(x)$ and $F_{n, \sigma}(x)$, $n \geqslant n_q$ of order less than $q$, have a majorant of the form $K_q e^{-c |x|^2 }$.
    \end{thrm}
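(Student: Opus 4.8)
The proof consists of recognizing $\lambda_{n,\sigma}$ (and, in the same way, $\nu_{n,\sigma}$) as an instance of the value–distribution machinery of Borchsenius and Jessen and then verifying that the hypotheses of Theorems 5--9 of \cite{BJ} are met, the new input being that Lemma \ref{curve:1} supplies the one–variable oscillatory estimate on which all the regularity conclusions rest. Exactly as in the computation of $\widehat{\mu_{n,\sigma}}$ preceding \eqref{jee:3}, Kronecker's theorem (Lemma \ref{krt:1}) gives
$$
\widehat{\lambda_{n,\sigma}}(y)=\int_{I^n} e^{\,i\,g_{n,\sigma}(\bth)\cdot y}\,\bigl|g'_{n,\sigma}(\bth)\bigr|^2\,d\bth,\qquad g_{n,\sigma}(\bth)=\sum_{m\leqslant n} g_{m,\sigma}(\theta_m),
$$
where $g_{m,\sigma}(\theta)=\log f_{m,\sigma,2}(\theta)-\log f_{m,\sigma,1}(\theta)$ is a convergent power series in $e^{2\pi i\theta}$ with vanishing constant term; it is identically zero at inert primes and at ramified or split primes with $\chi_2(\p_m)=1$, while for a split prime $p_m=\p_m\p'_m$ with $\chi_2(\p_m)\neq1$ its leading coefficient $\beta_m=2p_m^{-\sigma}(\R\chi_2(\p_m)-1)$ is a nonzero \emph{real} number; call such primes \emph{good}. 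Since each $g_{m,\sigma}$ has mean zero over $\theta_m$ with $\|g_{m,\sigma}\|_{L^2(I)}^2\ll p_m^{-2\sigma}$ and $\sum_m p_m^{-2\sigma}<\infty$ for $\sigma>1/2$ (the $\bth$-side of Proposition \ref{pro:2}), the partial sums $g_{n,\sigma}$ and the partial weights converge in $L^2(I^{\infty})$, so $\widehat{\lambda_{n,\sigma}}(y)\to\widehat{\lambda_\sigma}(y)$ uniformly on compact $y$-sets and the criterion of \S2 produces the limit distribution $\lambda_\sigma$.

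The regularity of the densities is read off from the decay of these Fourier transforms. Fix a finite set $S$ of good primes; conditioning on the variables $\theta_m$ with $m\notin S$ and writing $|g'_{n,\sigma}(\bth)|^2=|A(\bth_S)+B|^2$ with $B$ depending only on the frozen variables, every term of the expansion contains at least $|S|-2$ clean factors $\int_0^1 e^{\,i\,g_{m,\sigma}(\theta)\cdot y}\,d\theta$ with $m\in S$; by Lemma \ref{curve:1} (with $J=1$ and the fixed nonzero real leading coefficient $\beta_m$) each such factor is $O(\|y\|^{-1/2})$, locally uniformly in $\sigma$. Integrating out the frozen variables and using $\sum_m p_m^{-2\sigma}(\log p_m)^2<\infty$ to bound $\int|B|^2$ uniformly in $n$, one obtains $\widehat{\lambda_{n,\sigma}}(y)=O\bigl(\|y\|^{-(|S|-2)/2}\bigr)$ with a constant independent of $n$ and locally uniform in $\sigma$. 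Since $|S|$ may be taken as large as we please once $n$ is large (the good primes have positive density among the primes by Lemma \ref{tauber:1}), the inversion–formula criterion of \S2 gives the absolute continuity of $\lambda_{n,\sigma}$ for $n\geqslant n_0$, the $C^q$ regularity of $F_{n,\sigma}$ for $n\geqslant n_q$, and, after letting $n\to\infty$ with the decay bound held uniform, the $C^\infty$ regularity of $F_\sigma$ together with the uniform convergence of the densities and all their derivatives; differentiating under the inversion integral yields joint continuity in $(x,\sigma)$ and, on $[\alpha,\beta]\subset(1/2,\infty)$, joint uniformity. The Gaussian majorants $K_0 e^{-c|x|^2}$ and $K_q e^{-c|x|^2}$ follow from the sub-Gaussian concentration of $g_{n,\sigma}(\bth)$, a consequence of the bounded increments $g_{m,\sigma}(\theta_m)=O(p_m^{-\sigma})$ with $\sum_m p_m^{-2\sigma}<\infty$, uniform for $\sigma$ in a compact subinterval of $(1/2,\infty)$.

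It remains to prove the positivity $F_\sigma(x)>0$ for $1/2<\sigma\leqslant1$ and the entireness of $F_\sigma(x_1+ix_2)$ for $1/2<\sigma<1$, and this is where the real work lies. For positivity it suffices to show that $\lambda_\sigma$ has full support, i.e.\ that for every $x_0\in\mathbb C$ and $\varepsilon>0$ the set $\{\bth:g_\sigma(\bth)\in B(x_0,\varepsilon)\}$ has positive measure (one then notes that $g'_\sigma\neq0$ almost everywhere). Because the good primes have positive density, $\sum_{\text{good }p}p^{-\sigma}=\infty$ for $\sigma\leqslant1$, so $g_\sigma(\bth)=\sum_m g_{m,\sigma}(\theta_m)$ behaves like a series whose increments tend to $0$ while their absolute values sum to infinity; a standard steering argument (cf.\ \cite{BJ}) then lets us choose the $\theta_m$ on a finite block of good primes so that the corresponding partial sum lies within $\varepsilon/2$ of $x_0$, while the remaining contribution stays within $\varepsilon/2$ on a set of positive measure. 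Making this quantitative when $\sigma<1$—where $\sum_{\text{good }p\leqslant X}p^{-\sigma}\asymp X^{1-\sigma}/\log X$ grows like a positive power of $X$, so that reaching a point at distance $R$ requires roughly $R^{1/(1-\sigma)}$ good primes—upgrades the estimate of the previous paragraph to a stretched–exponential bound $\widehat{\lambda_\sigma}(y)\ll\exp(-c\|y\|^{\rho})$ with some $\rho=\rho(\sigma)>1$, and the inversion formula then shows that $F_\sigma$ extends to an entire function of $x_1$ and $x_2$ (of finite order). I expect this support/positivity step, together with the bookkeeping required to make all the oscillatory estimates simultaneously uniform in $n$, in $\sigma\in[\alpha,\beta]$, and in the large-$\|y\|$ regime, to be the main obstacles; the continuous dependence of all these distribution functions on $\sigma$ then follows from that same uniformity, the distribution functions $\nu_{n,\sigma}$ being treated identically with $g_{n,\sigma}$ replaced by $h_{n,\sigma}=\exp g_{n,\sigma}$ throughout, and the remaining conclusions being a transcription of Theorems 5--9 of \cite{BJ}.
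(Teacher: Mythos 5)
The paper does not actually prove this theorem: it simply asserts, in the sentence preceding it, that Theorems 5--9 of Borchsenius--Jessen \cite{BJ} apply ``in a straightforward way'' to the distributions $\lambda_{n,\sigma}$ and $\nu_{n,\sigma}$, and then records the transcribed conclusions as Theorems~\ref{bjt:2} and~\ref{bjt:3}. Your proposal therefore does substantially more than the paper: it reconstructs the internal machinery of \cite{BJ} rather than invoking it as a black box. The reconstruction is along the right lines, and your identification of the key inputs is correct --- the Fourier transform $\widehat{\lambda_{n,\sigma}}$ as a product over primes, the ``good'' split primes where $\chi_2(\p_m)\neq1$ giving a real nonzero leading coefficient $2(\R\chi_2(\p_m)-1)p_m^{-\sigma}$ in $g_{m,\sigma}$, the oscillatory estimate of Lemma~\ref{curve:1} giving a factor $O(p_m^{\sigma/2}\|y\|^{-1/2})$ per good prime, the positive density of good primes via Lemma~\ref{tauber:1}, and the convergence for $\sigma>1/2$ from $\sum p^{-2\sigma}<\infty$. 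These are exactly the verifications one must carry out to see that the BJ hypotheses hold for a ratio of Hecke $L$-functions rather than for $\zeta$ itself, so in that sense you are doing the implicit work the paper waves at.

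A few steps in your sketch are not yet watertight. First, your factor-by-factor bound is $O(p_m^{\sigma/2}\|y\|^{-1/2})$, not $O(\|y\|^{-1/2})$; for a fixed finite set $S$ this is harmless (the constant just depends on $S$), but to get the super-exponential decay needed for entireness when $1/2<\sigma<1$ you must let $|S|$ grow with $\|y\|$ while ensuring each factor stays below $1$, which forces $p_m\ll\|y\|^{1/\sigma}$; the resulting exponent in the stretched-exponential is controlled by $1/\sigma$, not by $1/(1-\sigma)$ as your sentence suggests, and the link you draw from the support counting argument to the Fourier decay is not a proof of it. Second, ``full support plus $g'_\sigma\neq0$ a.e.'' is not sufficient for strict pointwise positivity $F_\sigma(x)>0$; the density of an absolutely continuous measure with full support can vanish at points, so this requires the finer argument BJ actually give (exhibiting, for every target value, a nondegenerate finite-dimensional piece of the map onto which one can condition). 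Third, passing from a sub-Gaussian tail bound for $\lambda_{n,\sigma}$ to a pointwise Gaussian majorant for $F_{n,\sigma}$ and its derivatives requires combining the tail estimate with the uniform $C^q$ bounds from the Fourier decay; the single remark about bounded increments does not yet yield the stated majorant $K_qe^{-c|x|^2}$. None of these are wrong in spirit --- they are precisely the delicate points in \cite{BJ}, as you correctly anticipate --- but as written they are gestures rather than proofs.
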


    \begin{thrm}\label{bjt:3}
    For $\sigma>0$ the distribution functions $\nu_{n, \sigma}$ defined in \eqref{def:nu} are of the form
     $\nu_{n, \sigma}(B) = \int_{B_{\log}} e^{2 \R x} d\lambda_{n, \sigma}(x)$, where $ B_{\log} = \{ x \in \mathbb{C} | e^x \in B \} $ and  $\lambda_{n, \sigma}(x)$ is defined in \eqref{def:lambda}. For $n \geqslant n_0$ they are absolutely continuous with continuous densities $G_{n, \sigma}(x)$ that are zero if $x=0$  and are given by $G_{n, \sigma}(x) = \sum_{e^z = x} F_{n, \sigma}(z)$ if $x \neq 0$ . For $n \geqslant n_q$ the densities possess continuous partial derivatives of order less than $q$. If $\sigma>1/2$,  the distribution functions $\nu_{n, \sigma}$ converge for $n \to \infty$ to a distribution function $\nu_\sigma$   given  by $\nu_\sigma (B) = \int_{B_{\log} } e^{2 \R x} d\lambda_{\sigma}(x)$. The latter function   is absolutely continuous and has a continuous density $G_\sigma (x)$ which is zero if $x=0$  and, if $x\neq 0$, is given by $G_\sigma (x) = \sum_{e^z =x} F_\sigma (z)$. The density $G_{\sigma}(x)$ possesses continuous partial derivatives of arbitrarily high order which all vanish for $x=0$. The functions $G_{n, \sigma}$ and their partial derivatives converge uniformly to  $G_\sigma (x)$ and its partial derivatives when $n \to \infty$. If $1/2 < \sigma \leqslant 1$, then $G_\sigma(x)>0$ for all $x \neq 0$. If $1/2 < \sigma < 1$, then $G_\sigma (e^{x_1 +ix_2})$ is an entire function  of the two variables $x_1$, $x_2 $. The distribution function $\nu_{\sigma}$ depends continuously on $\sigma$  and its density $G_{\sigma}$, and the partial derivatives of $G_{\sigma}$ are continuous functions of $x$ and $\sigma$ together.  Further, if $1/2 < \alpha< \beta$, then the convergence of the $G_{n, \sigma}(x)$ and their partial derivatives to  $G_\sigma (x)$ and its partial derivatives is uniform in $x$ and $\sigma$ together for all $x$ and $ \alpha \leqslant \sigma \leqslant \beta$. If $c >0$ is arbitrary  and $1/2 < \alpha<\beta$, then the functions $G_\sigma (x)$ and $G_{n, \sigma}(x)$, $n \geqslant n_0$, have for $\alpha \leqslant \sigma \leqslant \beta$ and for $ x \neq 0$ a majorant of the form $K_0 e^{- c \log^2 |x|}$. Moreover,   for every $q$ and  $n \geqslant n_q$,  the partial derivatives of $G_\sigma (x)$ and $G_{n, \sigma}(x)$  of order less than $q$   have for $x\neq 0$ a majorant of the form $K_q e^{-c \log^2 |x|}$.
    \end{thrm}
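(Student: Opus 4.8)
The plan is to deduce Theorem~\ref{bjt:3} from Theorem~\ref{bjt:2} by pushing the distribution function $\lambda_{n,\sigma}$ forward under the exponential map, exploiting that $h_{n,\sigma}=\exp\circ\,g_{n,\sigma}$. First I would record the pointwise identities that drive everything. Since $h_{n,\sigma}(\bth)=\exp(g_{n,\sigma}(\bth))$, differentiating in $\sigma$ gives $h'_{n,\sigma}(\bth)=g'_{n,\sigma}(\bth)\exp(g_{n,\sigma}(\bth))$, hence
\[
|h'_{n,\sigma}(\bth)|^{2}=|g'_{n,\sigma}(\bth)|^{2}\,e^{2\R g_{n,\sigma}(\bth)}.
\]
Moreover, for a Borel set $B\subseteq\mathbb{C}$ one has $h_{n,\sigma}(\bth)\in B$ if and only if $g_{n,\sigma}(\bth)\in B_{\log}$, so $h_{n,\sigma}^{-1}(B)=g_{n,\sigma}^{-1}(B_{\log})$; and since $L_{n}(\sigma,\chi_{j};\bth)$ is a finite product of factors none of which vanishes for $\sigma>0$, we have $0\notin h_{n,\sigma}(I^{n})$, so we may assume $0\notin B$. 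Feeding $\phi=e^{2\R x}\mathbf{1}_{B_{\log}}$ into the relation $\int\phi\,d\lambda_{n,\sigma}=\int_{I^{n}}\phi(g_{n,\sigma}(\bth))|g'_{n,\sigma}(\bth)|^{2}\,d\bth$ (immediate from \eqref{def:lambda}) and combining with the displayed identities and \eqref{def:nu} yields
\[
\nu_{n,\sigma}(B)=\int_{g_{n,\sigma}^{-1}(B_{\log})}|g'_{n,\sigma}(\bth)|^{2}e^{2\R g_{n,\sigma}(\bth)}\,d\bth=\int_{B_{\log}}e^{2\R x}\,d\lambda_{n,\sigma}(x),
\]
which is the first assertion.

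Next I would convert to densities. For $n\geqslant n_{0}$, Theorem~\ref{bjt:2} gives $d\lambda_{n,\sigma}(x)=F_{n,\sigma}(x)\,dx$, so $\nu_{n,\sigma}(B)=\int_{B_{\log}}e^{2\R x}F_{n,\sigma}(x)\,dx$. The holomorphic map $z\mapsto e^{z}$ of $\mathbb{R}^{2}$ into $\mathbb{R}^{2}$ has real Jacobian $|e^{z}|^{2}=e^{2\R z}$, and restricted to any horizontal strip of height $2\pi$ it is a bijection onto $\mathbb{C}\setminus\{0\}$ up to a null set. Decomposing $B_{\log}$ along the strips $\{2\pi k\leqslant\I z<2\pi(k+1)\}$, changing variables $w=e^{z}$ on each, and summing (Tonelli, all integrands nonnegative) gives $\nu_{n,\sigma}(B)=\int_{B}\bigl(\sum_{e^{z}=w}F_{n,\sigma}(z)\bigr)\,dw$, so $\nu_{n,\sigma}$ is absolutely continuous with density $G_{n,\sigma}(w)=\sum_{e^{z}=w}F_{n,\sigma}(z)$ for $w\neq0$ and $G_{n,\sigma}(0)=0$. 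The preimages of $w$ are $z_{k}=\log|w|+i(\arg w+2\pi k)$, so $|z_{k}|^{2}=\log^{2}|w|+(\arg w+2\pi k)^{2}\to\infty$, and the Gaussian majorant $F_{n,\sigma}(z)=O(e^{-c|z|^{2}})$ from Theorem~\ref{bjt:2} makes the series -- and for $n\geqslant n_{q}$ the series of its partial derivatives of order $<q$ -- converge locally uniformly; this gives the claimed smoothness of $G_{n,\sigma}$ and $G_{n,\sigma}(0)=\lim_{w\to0}G_{n,\sigma}(w)=0$ (the preimages then have $\R z\to-\infty$).

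For $\sigma>1/2$ I would pass to the limit. Theorem~\ref{bjt:2} gives $\lambda_{n,\sigma}\to\lambda_{\sigma}$ with $F_{n,\sigma}\to F_{\sigma}$ and all partial derivatives converging uniformly in $x$ and in $\sigma\in[\alpha,\beta]$ for $1/2<\alpha<\beta$, with a common Gaussian majorant; since $e^{2\R x}$ times that majorant is integrable over $\mathbb{C}$, dominated convergence yields $\nu_{n,\sigma}(B)\to\nu_{\sigma}(B):=\int_{B_{\log}}e^{2\R x}\,d\lambda_{\sigma}(x)=\int_{B_{\log}}e^{2\R x}F_{\sigma}(x)\,dx$, and the same branch-sum computation gives its density $G_{\sigma}(w)=\sum_{e^{z}=w}F_{\sigma}(z)$ for $w\neq0$, $G_{\sigma}(0)=0$. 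The uniform convergence of the $F_{n,\sigma}$ (with derivatives) plus the uniform tail bound from the Gaussian majorant give $G_{n,\sigma}\to G_{\sigma}$ uniformly with all partial derivatives, uniformly in $w$ and in $\sigma\in[\alpha,\beta]$; this also transfers the joint continuity of $F_{\sigma}$ and its partials in $(x,\sigma)$ to $G_{\sigma}$, the continuous dependence of $\nu_{\sigma}$ on $\sigma$, and the vanishing of all partials of $G_{\sigma}$ at $0$. If $1/2<\sigma\leqslant1$ then $F_{\sigma}(z)>0$ for all $z$, so $G_{\sigma}(w)=\sum_{e^{z}=w}F_{\sigma}(z)>0$ for $w\neq0$. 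If $1/2<\sigma<1$ then $F_{\sigma}(x_{1}+ix_{2})$ is entire in $x_{1},x_{2}$, and with $w=e^{x_{1}+ix_{2}}$ we get $G_{\sigma}(e^{x_{1}+ix_{2}})=\sum_{k\in\mathbb{Z}}F_{\sigma}(x_{1}+i(x_{2}+2\pi k))$, a locally uniformly convergent series of entire functions, hence entire in $x_{1},x_{2}$. For the sharpened majorant, picking the branch $z_{0}=\log|w|+i\arg w$ with $\arg w\in(-\pi,\pi]$ and summing the Gaussian bounds over all branches,
\[
|G_{\sigma}(w)|\leqslant K_{0}\sum_{k\in\mathbb{Z}}e^{-c(\log^{2}|w|+(\arg w+2\pi k)^{2})}\leqslant\Bigl(K_{0}\sup_{\phi}\sum_{k}e^{-c(\phi+2\pi k)^{2}}\Bigr)e^{-c\log^{2}|w|},
\]
and the identical computation with the derivative majorants ($n\geqslant n_{q}$) gives the stated bound for the partial derivatives of order $<q$; since $c>0$ is arbitrary throughout, we may keep it fixed in the conclusion.

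The main obstacle is, honestly, not in this theorem: all the substance -- absolute continuity of $\lambda_{n,\sigma}$, the Gaussian majorants, positivity and analyticity of $F_{\sigma}$, and the uniform convergences -- is supplied by Theorem~\ref{bjt:2} (Theorems 5--9 of \cite{BJ}). Granting that, the only genuine work is the exponential change of variables, and the one point needing care is that $h_{n,\sigma}$ is \emph{not} injective on the torus $I^{n}$, so one cannot simply substitute in $\bth$; it is the Jacobian identity $|h'_{n,\sigma}|^{2}=e^{2\R g_{n,\sigma}}|g'_{n,\sigma}|^{2}$ together with $h_{n,\sigma}^{-1}(B)=g_{n,\sigma}^{-1}(B_{\log})$ that makes the first step go through, and it is the Gaussian decay of $F_{n,\sigma}$ and $F_{\sigma}$ that legitimizes the branch-sum in the remaining steps.
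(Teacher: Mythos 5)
Your argument is correct and matches what the paper does in spirit: the paper proves neither Theorem~\ref{bjt:2} nor Theorem~\ref{bjt:3} itself, instead asserting that both follow from "applying Theorems 5--9 of~\cite{BJ} to $\lambda_{n,\sigma}$ and $\nu_{n,\sigma}$ in a straightforward way," and what you have written out is precisely the content of Theorem~9 of Borchsenius--Jessen, namely the derivation of the $\nu$-statement from the $\lambda$-statement via the exponential change of variables. Your three key observations --- that $h_{n,\sigma}=e^{g_{n,\sigma}}$ gives the Jacobian identity $|h'_{n,\sigma}|^2=e^{2\R g_{n,\sigma}}|g'_{n,\sigma}|^2$ and the preimage identity $h_{n,\sigma}^{-1}(B)=g_{n,\sigma}^{-1}(B_{\log})$, that the exponential map on horizontal $2\pi$-strips of $\mathbb{C}$ has real Jacobian $e^{2\R z}$ canceling the weight, and that the Gaussian majorant from Theorem~\ref{bjt:2} justifies the branch-sum $\sum_{e^z=x}F_{n,\sigma}(z)$, its termwise differentiation, and the replacement $e^{-c\|z\|^2}\leadsto e^{-c\log^2|x|}$ --- are exactly the steps that make the cited application "straightforward," so there is no gap; you have simply supplied the details the paper leaves to~\cite{BJ}.
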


    By (\ref{jee:2}), (\ref{jee:3}) and Theorem \ref{bjt:3}, $\varphi''_{h_n +a} (\sigma)= 2 \pi G_{n,\sigma} (-a)$ and it converges to $ 2 \pi G_\sigma (-a)$ uniformly for $ \sigma \in [\sigma_1 , \sigma_2 ]$. Thus, we have $\varphi''_{E}(\sigma) = \varphi''_{h + a } (\sigma)= 2 \pi G_\sigma (-a)$ and, by \eqref{kre:2},
    \begin{equation}\label{eqn:4}
    N_E (\sigma_1 , \sigma_2 ; 0, T) =  T \int_{\sigma_1 }^{\sigma_2 }G_\sigma (-a) d\sigma +o(T).
    \end{equation}
    This gives the first assertion of  Theorem~\ref{thm:1} when the Epstein zeta function is a linear combination of two Hecke $L$-functions. The second assertion, that the integral here is positive 
    (again for a sum of two $L$-functions),  follows because by Theorem \ref{bjt:3} $G_\sigma (-a) >0$ for $ \frac{1}{2}< \sigma \leqslant 1$. 
  The proof of the final assertion of  Theorem~\ref{thm:1}
 concerning the special positive definite binary quadratic forms    $ Q_1$ and $ Q_2$ follows from 
 Theorem \ref{bmt:1} and the fact that (\ref{eqn:4}) holds for $ 1/2 <     \sigma_1 < \sigma_2 $.   
 This concludes the proof of Theorem~\ref{thm:1} when the Epstein zeta function is a linear combination of two Hecke $L$-functions. 
 
% In passing note that the positivity and continuity at $\sigma=1$ give another
%    proof of which there are infinitely many zeros in $\R s >1$.

\subsection{Sums of at least three Hecke $L$-functions}

    Now we consider the case of Epstein zeta functions of the form $E(s,Q) = \sum_{j=1}^J a_j L (s, \chi_j)$ with $ J \geqslant 3$. By Proposition \ref{pro:2} and Theorem \ref{bjt:1}, $\varphi_{E_n -x} (\sigma)$ converges to $\varphi_{E-x}(\sigma)$ uniformly for $ \sigma_1 \leqslant \sigma \leqslant \sigma_2  $ as $ n \to \infty$.

    Define $\mu_{n, \sigma}$ to be the asymptotic distribution function of $E_n (\sigma+it)$ with respect to $|E_n '
    (\sigma+it)|^2 $. We also define 
    \begin{equation}\label{eqn:ep1}
    E_{n, \sigma} (\bth) =  \sum_{j=1}^J a_j L_n (\sigma, \chi_j ; \bth)
    \end{equation}
     on $\bth \in  [0,1]^n$, and
     \begin{equation}\label{eqn:nudef} 
    \nu_{n,\sigma}(B)= \int_{E_{n,\sigma}^{-1}(B)}|E'_{n,\sigma}(\bth)|^2 d\bth
    \end{equation}
     for  Borel sets $B \subseteq \mathbb{C}$, where $E'_{n,\sigma} = \frac{\partial}{\partial \sigma} E_{n, \sigma}$. By the same argument that leads to (\ref{jee:3}), we have $\mu_{n,\sigma} = \nu_{n,\sigma}$.

    To prove Theorem ~\ref{thm:1}  for sums of three or more Hecke $L$-functions, we begin with the following observation. 
    Suppose that the distribution function $\nu_{n,\sigma}$ is absolutely continuous for each $\sigma$ and that its density $G_{n,\sigma} (x)$ is
    continuous in both $x$ and $\sigma$. By Proposition \ref{pro:1}, it would then follow that $\varphi''_{E_n -x}(\sigma) = 2 \pi
    G_{n,\sigma}(x)$. If, in addition, we knew  that for each $x$, $G_{n,\sigma}(x)$ converges to $G_\sigma (x)$ uniformly for $\sigma_1 \leqslant \sigma \leqslant \sigma_2$
    as $n \to \infty$, then we would also have that $\varphi''_{E-x}(\sigma) = 2 \pi G_\sigma (x)$. 
    Thus, by Theorem~\ref{bjt:1},
    $$N_{E-x} ( \sigma_1, \sigma_2 ; 0,T) =  T \int_{\sigma_1}^{\sigma_2} G_\sigma (x) d\sigma +o(T).$$
    Putting $x=0$, we would then have from Voronin's result (Theorem \ref{vo:1}) that
    \begin{equation}\label{eqn:5}
    \lim_{T \rightarrow \infty} \frac{1}{T} N_E ( \sigma_1 ,  \sigma_2 ; 0,T) =  \int_{\sigma_1}^{\sigma_2} G_\sigma (0) d\sigma >0,
    \end{equation}
    where $ \frac{1}{2} < \sigma_1 < \sigma_2 < 1$.
%%%%%%%%%%%%%%%%%%%%%%%%%%%%%%%%%%%%%%%%%
Hence, to complete our proof,  it suffices to confirm the three assumptions above. We will prove that 
$\widehat{\nu_{n, \sigma}} (y)$ 
 converges uniformly on $|y|\leqslant a$ and $\sigma_1 \leqslant \sigma \leqslant \sigma_2$ for any $a>0$, and that there are $K, d, n_0 , \epsilon>0$ satisfying 
 \begin{equation}\label{eqn:a2}
 |\widehat{\nu_{n,\sigma}}(y)| \leqslant K |y|^{-(2+\epsilon)}
\end{equation} 
 for $|y| \geqslant d $, $n \geqslant n_0$ and $ \sigma_1 \leqslant \sigma \leqslant \sigma_2 $. The equation \eqref{eqn:a2} and the facts about the Fourier transform of a distribution function in the beginning of $\S 2$ imply the first assumption on $\nu_{n,\sigma}$. 
By the inversion formula $ G_{n,\sigma} (x) = (2 \pi)^{-1} \int_{\Bbb{C}} e^{-i x\cdot y } \widehat{\nu_{n,\sigma}} (y) dy$, the equation \eqref{eqn:a2} and the uniform convergence of $\widehat{\nu_{n,\sigma}}(y)$ for any $|y| \leqslant a $ and $\sigma \in [\sigma_1 , \sigma_2 ]$ imply the other assumptions.

%[[First, $\nu_{n, \sigma}$ is absolutely continuous for each $\sigma$ and $G_{n, \sigma}(x)$ is continuous in both $x$ and $\sigma$. Second, for each $x$, $G_{n, \sigma}(x)$ converges uniformly for $\sigma_1 \leqslant \sigma \leqslant \sigma_2 $ to $G_\sigma(x)$. It is enough to show that there are $K, d, n_0 , \epsilon>0$ satisfying $|\widehat{\nu_{n,\sigma}}(y)| \leqslant K |y|^{-(2+\epsilon)}$ for $|y| \geqslant d $, $n \geqslant n_0$, and that $\widehat{\nu_{n, \sigma}} (y)$ converges uniformly for every disc $|y|\leqslant a$ and $\sigma_1 \leqslant \sigma \leqslant \sigma_2$.

    We first prove the uniform convergence of $\widehat{\nu_{n, \sigma}} (y)$ for $|y|\leqslant a$ and $\sigma_1 \leqslant \sigma \leqslant \sigma_2 $ using an upper bound for $     \widehat{\nu_{n+1, \sigma}} (y) -     \widehat{\nu_{n, \sigma}} (y)$. By \eqref{eqn:nudef} we have
    \begin{equation}\label{nuhatform}
    \widehat{\nu_{n, \sigma}} (y)= \int_{[0,1]^n} e^{ i E_{n,\sigma}(\bth)\cdot y} \left| E'_{n,\sigma}(\bth)\right|^2 d\bth.
    \end{equation}
    From the definition of $L_n(\sigma, \chi_j; \bth )$ and $f_{n, \sigma, j}(\theta)$  in \eqref{def:L,theta} and \eqref{def:f}, respectively, we have
    \begin{equation*}
     E_{n+1, \sigma} (\bth_{n+1})  = \sum_{j=1}^J a_j L_{n+1}  (\sigma, \chi_j ; \bth_{n+1} ) 
      = \sum_{j=1}^J a_j L_n  (\sigma, \chi_j ; \bth)f_{n+1, \sigma, j} ( \theta_{n+1}) 
    \end{equation*}
   for $ \bth_{n+1} = ( \bth, \theta_{n+1})= ( \theta_1 , \dots,\theta_n,  \theta_{n+1} ) \in [0,1]^{n+1}$.  
       We shall only consider the case
    $$f_{n+1, \sigma, j }(\theta_{n+1}) =\left( 1- \frac{ w_{j,n+1} e^{2 \pi i \theta_{n+1}}}{(p_{n+1})^\sigma} + \frac{ e^{4 \pi i \theta_{n+1}}}{(p_{n+1})^{2\sigma}} \right)^{-1}$$
    in \eqref{def:f} for $ w_{j,n} = 2 \R [ \chi_j (\p_n)] $ since the others are easier and may be handled in the same way. Writing  $ L_n = L_n ( \sigma, \chi_j ; \bth )$ and $ L'_n = L'_n ( \sigma, \chi_j ; \bth )$, we see that
    \begin{equation}\label{eqn:wjn}\begin{split}
     & E_{n+1, \sigma} (\bth_{n+1}) \\
      & =   \sum_{j=1}^J a_j L_n  (\sigma, \chi_j ; \bth) \left( 1 + \frac{ w_{j,n+1} e^{2 \pi i \theta_{n+1}}}{(p_{n+1})^\sigma}  \right)  + O \left( \frac{  \sum_j |  L_n  | }{ ( p_{n+1} )^{2\sigma} }  \right) \\
     &=  E_{n, \sigma} (\bth) + \frac{  e^{2 \pi i \theta_{n+1} } } {(p_{n+1})^\sigma} \sum_{j=1}^J a_j  w_{j,n+1}   L_n  (\sigma, \chi_j ; \bth) + O \left( \frac{  \sum_j  |  L_n  | }{ ( p_{n+1} )^{2 \sigma} }  \right),
    \end{split} \end{equation}
    and similarly 
        \begin{equation*}\begin{split}
     & E'_{n+1, \sigma} (\bth_{n+1}) \\
      & =  E'_{n, \sigma} (\bth)  + \frac{  e^{2 \pi i \theta_{n+1} } } {(p_{n+1})^\sigma} \sum_{j=1}^J a_jw_{j,n+1}( L'_n (\sigma, \chi_j ; \bth)  - \log p_{n+1} L_n (\sigma, \chi_j ; \bth) ) \\
      & + O \left( \frac{ \sum_j ( |L_n| \log p_{n+1} + | L'_n| ) }{ ( p_{n+1})^{2 \sigma}}  \right).
            \end{split} \end{equation*}
 From the latter we find that
     \begin{equation*}\begin{split}
      |  E'_{n+1, \sigma} &(\bth_{n+1}) |^2  \\
        = &|  E'_{n, \sigma} (\bth) |^2  
         +2 \R \left[ \overline{ E'_{n,\sigma} (\bth)  }   \frac{  e^{2 \pi i \theta_{n+1} } } {(p_{n+1})^\sigma} \sum_{j=1}^J a_jw_{j,n+1}( L'_n (\sigma, \chi_j ; \bth)  - \log p_{n+1} L_n (\sigma, \chi_j ; \bth) )  \right]  \\
  & + R_{n, \sigma}(\bth),
   \end{split} \end{equation*}
 where
  \begin{equation*}   
R_{n, \sigma}(\bth) \ll \frac{ (1+\sum_j |L'_n | )( \sum_j |L_n| \log p_{n+1} + | L'_n| ) }{ ( p_{n+1})^{2 \sigma}} + \frac{  ( \sum_j |L_n| \log p_{n+1} + |L'_n| )^2 }{ ( p_{n+1})^{4\sigma}} .
      \end{equation*}
           Hence, we have
   \begin{equation}\label{eqn:bded 1}\begin{split}
    \widehat{\nu_{n+1, \sigma}} (y) = & \int_{[0,1]^n}\int_0^1 e^{iE_{n+1, \sigma}(\bth_{n+1})\cdot y}  
      |E'_{n, \sigma}(\bth)|^2 d\theta_{n+1} d\bth \\
      &+  \int_{[0,1]^n}\int_0^1 e^{iE_{n+1, \sigma}(\bth_{n+1})\cdot y}
     \frac{2}{(p_{n+1})^\sigma} \R \bigg(\overline{E'_{n, \sigma}(\bth)}e^{2 \pi i \theta_{n+1}}   \\ &   
    \qquad \qquad\sum_{j=1}^J a_j w_{j,n+1} ( L'_n (\sigma, \chi_j ; \bth)  - \log p_{n+1} L_n (\sigma, \chi_j ; \bth) ) \} \bigg) 
    d\theta_{n+1} d\bth \\
      &+  O\left( \int_{[0,1]^n}  R_{n, \sigma}(\bth)  d\bth \right)
    \end{split}\end{equation}
We apply Lemma \ref{fin:1} to the error term. Let $K=2$ and define $ c(m,1)$ and $c(m,2)$ for $ m \leqslant n $ by 
    \begin{equation*}
c(m,1)  =
\begin{cases}
 \chi_j ( \p _m )    & if\ \    p_m = \p_m ^2 , \cr
 1                  & if\ \    p_m = \p_m ,     \cr
 \chi_j ( \p _m )  & if\ \  p_m = \p_m \p'_m , \p_m \neq \p'_m \cr
\end{cases}
\end{equation*}
and 
    \begin{equation*}
c(m,2)  =
\begin{cases}
 0    & if\ \    p_m = \p_m ^2 , \cr
 -1                  & if\ \    p_m = \p_m ,     \cr
 \overline{ \chi_j ( \p _m ) }  & if\ \  p_m = \p_m \p'_m , \p_m \neq \p'_m .\cr
\end{cases}
\end{equation*}
Then we have 
    \begin{equation*}
    \mathcal{I} (\z, \w ) = \int _{ [0,1]^n}
      L_n ( \sigma + z , \chi_j ; \bth)\overline{  L_n (
    \sigma+ \overline{w}, \chi_j ; \bth )} d \bth
    \end{equation*}
    for $ \z = ( z,z) $ and $ \w = (w,w )$. By Lemma \ref{fin:1}, we have
    \begin{equation}\label{eqn:lem241}
    \mathcal{I} (0,0 ) = \int_{[0,1]^n} | L_n ( \sigma, \chi_j ; \bth ) |^2 d\bth \leqslant A 
    \end{equation}
    and 
    \begin{equation}\label{eqn:lem242}
    \frac{ \partial ^2 \mathcal{I}}{ \partial z \partial w } (0,0) = \int_{[0,1]^n} | L'_n ( \sigma, \chi_j ; \bth ) |^2 d\bth \leqslant A  
    \end{equation}
  for some $A>0$. Hence, by the Cauchy-Schwarz inequality we have
  \begin{equation*}  
\int_{[0,1]^n}  R_{n, \sigma}(\bth)  d\bth  
\ll \frac{ \log p_{n+1}  }{ ( p_{n+1})^{2 \sigma}}.
\end{equation*}  
%    \begin{equation}\label{eqn:bded 1}\begin{split}
%    \widehat{\nu_{n+1, \sigma}} (y) = & \int_{[0,1]^n}\int_0^1 e^{iE_{n+1, \sigma}(\bth_{n+1})\cdot y} [  |E'_{n, \sigma}(\bth)|^2 +
%     \frac{2}{(p_{n+1})^\sigma} \R \{ \overline{E'_{n, \sigma}(\bth)}e^{2 \pi i \theta_{n+1}}   \\ & \sum_{j=1}^J a_j w_{j,n+1} ( L'_n (\sigma, \chi_j ; \bth)  - \log p_{n+1} L_n (\sigma, \chi_j ; \bth) ) \} ]  d\theta_{n+1} d\bth \\
%    + &  O( \frac{ \log p_{n+1}  }{ ( p_{n+1})^{2 \sigma}} ).
%     \end{split}\end{equation}
We now wish to show that $\widehat{\nu_{n+1, \sigma}} (y) -\widehat{\nu_{n, \sigma}} (y)$ is small. 
By \eqref{eqn:bded 1} and our estimate for the error term, we have
    \begin{equation}\label{eqn:bded 4}\begin{split}
     & \widehat{\nu_{n+1, \sigma}} (y) -\widehat{\nu_{n, \sigma}} (y) \\ 
         = & \int_{[0,1]^n}\int_0^1 ( e^{iE_{n+1, \sigma}(\bth_{n+1})\cdot y}- e^{iE_{n, \sigma}(\bth )\cdot y} )  |E'_{n, \sigma}(\bth)|^2 d\theta_{n+1} d\bth \\
           &+    \frac{2}{(p_{n+1})^\sigma} \int_{[0,1]^n}\int_0^1 e^{iE_{n+1, \sigma}(\bth_{n+1})\cdot y}
   \R \bigg(\overline{E'_{n, \sigma}(\bth)}e^{2 \pi i \theta_{n+1}}   \\ &   
    \qquad \qquad\sum_{j=1}^J a_j w_{j,n+1} ( L'_n (\sigma, \chi_j ; \bth)  - \log p_{n+1} L_n (\sigma, \chi_j ; \bth) ) \} \bigg) 
    d\theta_{n+1} d\bth  \\
    &+  O\left( \frac{ \log p_{n+1}  }{  (p_{n+1 } )^{2 \sigma}} \right).
\end{split} \end{equation}
To treat the second term on the right-hand side above, we note that by
  \eqref{eqn:wjn}
      \begin{equation}\label{eqn:bded 2}\begin{split}
      \int_0^1  &e^{iE_{n+1, \sigma}(\bth, \theta_{n+1})\cdot y} e^{\pm 2 \pi i \theta_{n+1}} d\theta_{n+1} \\
     & =\int_0^1  e^{iE_{n, \sigma}(\bth)\cdot y} e^{\pm 2 \pi i \theta_{n+1}}
     \left( 1+ O\left( \frac{\sum_j|L_n|}{(p_{n+1})^\sigma}\right) \right) d\theta_{n+1} \ll \frac{\sum|L_n|}{(p_{n+1})^\sigma} .
    \end{split}\end{equation}
   To treat the first term   we use
    \begin{equation}\label{eqn:bded 3}\begin{split}
      \int_0^1 &e^{iE_{n+1, \sigma}(\bth_{n+1})\cdot y} -e^{iE_{n, \sigma}(\bth)\cdot y} d\theta_{n+1} \\ = & i e^{iE_{n, \sigma}(\bth)\cdot y}\int_0^1  y \cdot \left(e^{2 \pi i \theta_{n+1}} \sum_j a_j L_n (\sigma, \chi_j ; \bth)\frac{w_{j,n+1} }{(p_{n+1})^\sigma} \right) d\theta_{n+1} +O\left( \frac{ \sum |L_n |+|L_n |^2 }{(p_{n+1})^{2 \sigma}}\right) \\
      &\ll \frac{ \sum |L_n |+|L_n |^2 }{(p_{n+1})^{2 \sigma}}.
    \end{split}\end{equation}
    for $|y| \leqslant a$. 
    By \eqref{eqn:bded 4}--\eqref{eqn:bded 3}, we now see that
    \begin{equation*}\begin{split}
     & \widehat{\nu_{n+1, \sigma}} (y) -\widehat{\nu_{n, \sigma}} (y) \\ 
      &\ll \frac{1}{(p_{n+1})^{2\sigma}}  \int_{[0,1]^n} \left(\sum_j |L_n | + |L_n |^2 \right) |E'_{n,\sigma}(\bth) |^2  d\bth \\
      &+\frac{1}{(p_{n+1})^{2\sigma}}  \int_{[0,1]^n} \left( \sum_j |L_n| \right) |E'_{n,\sigma}(\bth)| \left( \sum_j |L'_n | + \log p_{n+1} |L_n | \right) d \bth + \frac{ \log p_{n+1} }{ (p_{n+1})^{2 \sigma}}.
     \end{split}\end{equation*}
Similarly to the equations \eqref{eqn:lem241} and \eqref{eqn:lem242}, we can apply Lemma \ref{fin:1} to see that
$$ \int_{[0,1]^n } |L_n ( \sigma, \chi_j ; \bth )|^{2\kappa } d\bth \leqslant A_\kappa $$ 
and 
$$ \int_{[0,1]^n } |L'_n ( \sigma, \chi_j ; \bth )|^{2\kappa } d\bth \leqslant A_\kappa $$ 
for any  positive integer $\kappa$.  Using these bounds and the Cauchy-Schwartz inequality, we obtain
    \begin{equation*}
    \widehat{\nu_{n+1, \sigma}} (y) - \widehat{\nu_{n, \sigma}} (y)    
   \ll \frac{\log p_{n+1}}{ (p_{n+1})^{2\sigma}}  
    \end{equation*}
    uniformly for $|y| \leqslant a$. It follows that for any $m>n$
    \begin{equation*}
    \widehat{\nu_{m, \sigma}} (y)-\widehat{\nu_{n, \sigma}} (y) \ll \sum_{p > p_n }^\infty \frac{\log p}{ p^{2\sigma}} 
    \ll  (p_n)^{1-2 \sigma}.
    \end{equation*}
    Therefore, $\widehat{\nu_{n, \sigma}} (y)$ converges uniformly for $|y| \leqslant a$ and $\sigma_1 \leqslant \sigma\leqslant \sigma_2$ as $n \to \infty$.

    Finally, we prove that there are $K, d, n' , \epsilon>0$ satisfying $|\widehat{\nu_{n,\sigma}}(y)| \leqslant K |y|^{-(2+\epsilon)}$ for $|y| \geqslant d $ and all $n \geqslant n'$.     By  \eqref{eqn:ep1} and \eqref{nuhatform}  we have
    \begin{equation}\label{eqn:3}\begin{split}
    \widehat{\nu_{n,\sigma}}(y) = & \int_{[0,1]^n} e^{iE_{n,\sigma}(\bth)\cdot y} |E'_{n,\sigma} (\bth)|^2 d\bth \\
    = & \sum_{j=1}^J |a_j|^2 \int_{[0,1]^n} e^{iE_{n,\sigma}(\bth)\cdot y} |L'_n (\sigma ,\chi_j ; \bth)|^2 d\bth \\
     & +\sum_{j \neq l} \bar{a_j}a_l \int_{[0,1]^n} e^{iE_{n,\sigma}(\bth)\cdot y} \overline{ L'_n (\sigma ,\chi_j ; \bth)}L'_n (\sigma ,\chi_l ; \bth) d\bth .
    \end{split}\end{equation}
   To bound this we  follow  the theory developed in Chapter 2 of 
    Borchsenius and Jessen~\cite{BJ}, which requires a number of definitions.

    For $\bth \in [0,1]^n$, we define  
    $$\L_{n, \sigma} (\bth) = ( L_n ( \sigma, \chi_1 ; \bth), \dots, L_n (\sigma, \chi_J ; \bth))$$
     and 
     $$\M_{n, \sigma} (\bth) = ( \log L_n ( \sigma, \chi_1 ; \bth) , \dots, \log L_n (\sigma, \chi_J ; \bth)).$$
    Let $\B \subseteq \mathbb{C}^J$ be a Borel set, let $ 1 \leqslant j,l \leqslant J$, $ j \neq l$, and let 
    $\sigma > \frac{1}{2}$. We define  distribution functions   
    \begin{equation*}\begin{split}
    \bla_{n, \sigma; j}(\B) = & \int_{\M_{n,\sigma}^{-1}(\B)} \left| \frac{L'_n}{L_n}(\sigma, \chi_j ; \bth) \right|^2 d\bth ,\\
    \bla_{n, \sigma; j,l ;\delta }(\B) = & \int_{\M_{n,\sigma}^{-1}(\B)} \left| \frac{L'_n}{L_n}(\sigma, \chi_j ; \bth) +\delta \frac{L'_n}{L_n}(\sigma, \chi_l ; \bth) \right|^2 d\bth
    \end{split}\end{equation*}
    for $\delta = \pm 1 , \pm i $,  and their Fourier transforms as
    \begin{equation*}\begin{split}
    \widehat{\bla_{n, \sigma; j } }(\y) = & \int_{\mathbb{C}^J} e^{i \x \cdot \y} d \bla_{n, \sigma; j}(\x), \\
    \widehat{\bla_{n, \sigma; j,l ;\delta } }(\y) = & \int_{\mathbb{C}^J} e^{i \x \cdot \y} d \bla_{n, \sigma; j,l; \delta}(\x),
    \end{split}\end{equation*}
    where $ \x = ( x_1 +ix_1', \dots , x_J+ix_J' ) $, $\y = (y_1 +iy_1' , \dots , y_J+iy_J')$ and $ \x \cdot \y = \sum_{j=1}^J ( x_j y_j + x_j' y_j' ) $. Then we have
    \begin{equation*}\begin{split}
    \widehat{\bla_{n, \sigma; j } }(\y) = & \int_{[0,1]^n} e^{i\M_{n,\sigma}(\bth)\cdot \y} \left| \frac{L'_n}{L_n}(\sigma, \chi_j ; \bth) \right|^2 d\bth , \\
    \widehat{\bla_{n, \sigma; j,l ;\delta} }(\y) = & \int_{[0,1]^n} e^{i\M_{n,\sigma}(\bth)\cdot \y} \left| \frac{L'_n}{L_n}(\sigma, \chi_j ; \bth)+\delta \frac{L'_n}{L_n}(\sigma, \chi_l ; \bth) \right|^2 d\bth.
    \end{split}\end{equation*}
    Since $\log L_n (\sigma, \chi_h ; \bth) = \sum_{k=1}^n \log f_{k, \sigma, h}(\theta_k)$ and $ L'_n/L_n(\sigma, \chi_h ; \bth) = \sum_{k=1}^n   f'_{k, \sigma, h}/f_{k, \sigma, h} (\theta_k)$, we have
    \begin{equation}\label{fut:1}
    \widehat{\bla_{n, \sigma; j } }(\y) = \sum_{m=1}^n K_{2, m}(\y) \prod_{k \neq m} K_{0,k} (\y) + \sum_{m \neq m'} \overline{ K_{1, m}(-\y)} K_{1, m'}(\y) \prod_{k \neq m, m'} K_{0,k} (\y),
    \end{equation}
    where
    \begin{equation*}\begin{split}
    K_{0,k} (\y) & = \int_0 ^1 e^{i \sum_{h=1}^J \log f_{k, \sigma , h} (\theta) \cdot (y_h +i y_h') } d\theta, \\
    K_{1,k} (\y) & = \int_0 ^1 e^{i \sum_{h=1}^J \log f_{k, \sigma , h} (\theta) \cdot (y_h +i y_h') } \frac{f'_{k,\sigma,j}}{f_{k,\sigma,j}}(\theta) d\theta ,\\
    K_{2,k} (\y) & = \int_0 ^1 e^{i \sum_{h=1}^J \log f_{k, \sigma, h} (\theta) \cdot (y_h +i y_h') } \left| \frac{f'_{k,\sigma,j}}{f_{k,\sigma,j}}(\theta)\right|^2 d\theta.
    \end{split}\end{equation*}
    As an easy consequence of the definitions of the $ K_{i,k} (\y), i=0, 1, 2$, we see that
       \begin{align}
    |K_{0,k}(\y)| & \leqslant 1 ,\label{max:1}  \\
    |K_{2,k}(\y)| & \leqslant C \frac{\log ^2 p_k }{p_k^{2 \sigma}},\label{max:4} 
    \end{align}
    and
    \begin{equation}\label{fut:2}
    K_{1,k}(\y) =  \int_0 ^1 \left(  1+ O\left( \frac{ \| \y \|}{p_k ^\sigma}\right)\right) 
    \frac{f'_{k,\sigma,j}}{f_{k,\sigma,j}}(\theta) d\theta  
    \ll \frac{ \| \y \| \log p_k }{p_k ^{2\sigma}},
    \end{equation}
    where $C$ is a positive absolute constant.
    By Lemma \ref{curve:1} we also have 
    \begin{equation}\label{max:2}
    |K_{0,k} (\y)| \leqslant \frac{Cp_k ^{\sigma/2}}{\sqrt{ \| \y \|}}
    \end{equation}
   provided that $p_k \in P_2$ is sufficiently large and $\y \in \Bbb{C}^J$ satisfies
   \begin{equation}\label{sum condition}
   \bigg| \sum_{h=1}^J (\R \chi_h (\p_k )) ( y_h  + i y_h' ) \bigg| \geqslant  \| \y \|  / 7;
  \end{equation}
  here $\p_k$ is either of the two primes ideals lying above $p_k$. Note that 
  if $p_k=\p_k \p_k'$, then $\R \chi_h (\p_k ) =\R \chi_h (\p_k' )$, so the value of the sum is the same for either.
   %To apply \eqref{max:2} to \eqref{fut:1}, 
  We wish to show that for each $\y$ this condition is met by the $\p_k$'s in at least one
  of the $h(D)$ ideal classes.  To this end,  we consider the sum 
  $ \sum_{ \p } \big| \sum_{h=1}^J (\R \chi_h (\p)) b_h \big|^2 $, 
  where the sum is over one representative prime ideal from each of the $h(D)$ ideal classes and 
  $ b_h \in \Bbb{C}$.  We note that $ \chi_h \neq \chi_{h'} $ and 
  $ \chi_h \neq \bar{\chi}_{h'} $ for $ h \neq h' $. By letting $b_\chi = b_h $ 
  for a real character $\chi = \chi_h $  and $b_\chi =  b_h / 2$ for a non real 
  character $\chi= \chi_h $ or $ \chi = \bar{\chi}_h $, we see that
    \begin{equation*}
      \sum_{ \p } \bigg| \sum_{h=1}^J (\R \chi_h (\p ) ) b_h \bigg|^2 =  
      \sum_{ \p } \bigg| \sum_\chi  \chi (\p ) b_\chi \bigg|^2 = 
       h(D) \sum_\chi |b_\chi|^2 \geqslant \frac{h(D)}{2} \sum_{h=1}^J |b_h|^2.
    \end{equation*}
    Thus we have
    \begin{equation*} 
    \max_{\mathfrak{p}} \bigg|\sum_{h=1}^J (\R \chi_h ( \p )) b_h \bigg|^2 
    \geqslant  \frac{1}{2} \sum_{h=1}^J |b_h|^2.
    \end{equation*}
   From this we see that  \eqref{sum condition} holds for at least one prime ideal $\p$.
    
   %%%%%%%%%%%%%%%%%%%%%%
   Returning to \eqref{fut:1},  for a given $\y$  consider the product 
   \begin{equation*}
   \prod_{\substack{k=1\\ k\neq m}}^n  K_{0, k}(\y).
     \end{equation*}
 To each $k$ there corresponds a prime $p_k$, and by Lemma \ref{tauber:1}  
 we see that as $n\to\infty$, the number of these that are in $P_2$ is $\sim n$.
% For each   $p_k \in P_2$, either choice of  $\p_k$ lying above it gives the same value 
% for the sum in \eqref{sum condition} . 
 Again by  Lemma \ref{tauber:1} it is clear that  a positive proportion of  these $p_k$'s
will split into prime ideals for which \eqref{sum condition} holds.
Thus, by (\ref{max:1}) and (\ref{max:2}) we have  
$$
 \prod_{\substack{k=1\\ k\neq m}}^n  |K_{0, k}(\y)|  
\leq A_r\bigg(\frac{ 1}{\sqrt{ \| \y \|}}\bigg)^r
$$
for any positive integer $r$, provided that $n$ is large enough.   Obviously the same estimate holds for 
the other product in \eqref{fut:1} as well.     
   Thus, given $q>0$, there is a number 
    $n_q$ such that for $n \geqslant n_q$,
    \begin{equation*}
    \widehat{\bla_{n,\sigma ; j}}(\y) = O( \|\y\|^{-(2k +q+1)}).
    \end{equation*}
    Similarly, we have
    \begin{equation*}
    \widehat{\bla_{n,\sigma ; j, l; \delta}}(\y) = O( \|\y\|^{-(2k+q+1)}).
    \end{equation*}
    
  By the discussion in the beginning of \S 2, we now see that  the distributions $ \bla_{n,\sigma ; j}$ and $\bla_{n,\sigma ; j, l; \delta}$ are absolutely continuous, and their densities $\F_{n, \sigma ; j}(\x)$, $\F_{n, \sigma; j, l;\delta}(\x)$ are continuous and possesses continuous partial derivatives of order $\leqslant q$ for $n \geqslant n_q$. 
  %%%%%%%%%%%%
  
By the definitions of $K_{0,k}(\y)$ and  $f_{k, \sigma , h}(\theta)$ we find that
  $$K_{0,k}(\y) = \int_0 ^1 1+ i \sum_{h=1}^J \log f_{k, \sigma , h}(\theta) 
  \cdot (y_h+iy_h') d\theta + O\left( \frac{ \|\y\|^2}{p_i ^{2 \sigma}}\right) = 1+ O\left( \frac{ \|\y\|^2}{p_i ^{2 \sigma}}\right).$$ 
Using this and   (\ref{fut:1})--(\ref{fut:2}), we see that 
 $$\widehat{ \bla_{n+1, \sigma; j}}(\y) - \widehat{ \bla_{n, \sigma; j}}(\y)\ll \frac{ \log^2 p_{n+1}}{p_{n+1} ^{2 \sigma}} $$
  uniformly for every sphere $\|\y\| \leqslant a$.  By Cauchy's convergence criterion, it follows that 
  $\lim_{n \to \infty} \widehat{ \bla_{n, \sigma; j}}(\y)$ exists, and by our discussion in \S 2 the limit  
  function is the Fourier transform of a distribution $\bla_{\sigma; j}$,  and $ \bla_{n, \sigma ; j} \to \bla_{ \sigma;j}$ as $n\to \infty$. Furthermore, the density $\F_{\sigma ; j}(\x)$ of $\bla_{\sigma; j}$ is the limit of the density functions $ \F_{n, \sigma ; j }(\x)$ for $\sigma > 1/2$. Analogous results  hold for $\bla_{\sigma; j, l ; \delta}$. By Theorem 6 of \cite{BJ}, for any $c>0$, the partial derivatives of the densities $\F_{\sigma; j}(\x)$, $ \F_{n, \sigma; j}(\x)$, $ \F_{\sigma; j,l ; \delta}(\x)$ and $ \F_{n, \sigma; j,l; \delta}(\x)$ of order $\leqslant q$ have a majorant $K_q e^{- c \|\x \|^2}$ for $n \geqslant n_q$.
    
    Next we define $\bla_{n,\sigma; j, l}$ by
    $$\bla_{n,\sigma; j, l}(\B) = \frac{1}{4}\sum_{\delta= \pm 1, \pm i } \bar{\delta} \ \bla_{n, \sigma; j, l ; \delta}(\B)$$
    for any Borel set $\B \subseteq \mathbb{C}^J$.
   Then by the identity  $\bar{a}b = \frac{1}{4} ( |a+b|^2 - |a-b|^2 -i |a+ib|^2 +i |a-ib|^2 )$ and the definition of 
   $\bla_{n, \sigma; j, l ; \delta}(\B)$, we have
    \begin{equation*}
    \bla_{n,\sigma; j, l}(\B)= \int_{\M^{-1}_{n, \sigma}(\B)} \overline{ \frac{L'_n}{L_n}(\sigma, \chi_j ; \theta) }\frac{L'_n}{L_n}(\sigma, \chi_l ; \theta) d\bth.
    \end{equation*}
    Clearly the density function of $\bla_{n,\sigma; j,l}$ is $\F_{n, \sigma; j, l}(\x)= \frac{1}{4}\sum_{\delta= \pm 1, \pm i } 
    \bar{\delta} \F_{n, \sigma; j, l ; \delta}(\x)$, and its partial derivatives of order $\leqslant q$ are majorized by 
    $K_q e^{-c \|\x\|^2}$ for $n \geqslant n_q$.

We are now ready to estimate $\widehat{\nu_{n,\sigma}}(y)$  in  \eqref{eqn:3}. There are two sums on the right-hand
side of  \eqref{eqn:3}. We will just estimate the terms in the second sum as those in the first are similar. 
    For $\B \subseteq \mathbb{C}^J$, we define $\B_{\log} = \{ \x \in \mathbb{C}^J : e^\x \in \B \}$, where $\x = ( x_1 + ix_1' , \dots, x_J+ix_J' )$ and $ e^\x = ( e^{x_1 + i x_1 ' } , \dots e^{ x_J + i x_J' } )$. Then we have $\M_{n,\sigma}^{-1}( \B_{\log} ) = \L_{n,\sigma}^{-1}( \B )$. We also have $ E_{n, \sigma}(\bth )\cdot y = \L_{n, \sigma}(\bth) \cdot \z$ for $ \z = (\bar{a_1} y, \dots , \bar{a_J}y) \in \mathbb{C}^J$. The typical term  in the second sum   in \eqref{eqn:3} is 
    \begin{equation*}\begin{split}
    I=
    & \int_{[0,1]^n} e^{i E_{n, \sigma}(\bth)\cdot y} \overline{ L'_n (\sigma, \chi_j ;\bth)} L'_n ( \sigma, \chi_l; \bth) d\bth \\
    = & \int_{[0,1]^n} e^{i \L_{n, \sigma}(\bth)\cdot \z} \overline{ L_n (\sigma, \chi_j; \bth)} L_n (\sigma, \chi_l ;\bth) \overline{ \frac{L'_n}{L_n} (\sigma, \chi_j ;\bth)} \frac{L'_n}{L_n} ( \sigma, \chi_l; \theta) d\bth \\
    = & \int_{\mathbb{C}^J} e^{i e^\x \cdot \z + x_j- ix_j' +x_l + ix_l' } d\bla_{n,\sigma; j,l}(\x)
    =  \int_{\mathbb{C}^J} e^{i e^\x \cdot \z + x_j- ix_j' +x_l + ix_l' } \F_{n,\sigma; j,l}(\x) d\x,
    \end{split}\end{equation*}
    where $ d\x =  \prod_{h=1}^J dx_h dx_h'$.  
   Let  $ \m = ( m_1 , \dots, m_J )$ and $ R_m = \{ x+ix' : x \in \mathbb{R}, x' \in [ 2 \pi m , 2 \pi (m+1) ] \} $.
  Then we can write $ \mathbb{C}^J = \bigcup_{ \m \in \mathbb{Z}^J} \prod_{h=1}^J R_{m_h} $.  We then have
   \begin{equation*}\begin{split}
   I=  & \sum_{ \m \in \mathbb{Z}^J}  \int_{   R_{m_1} \times \cdots\times R_{m_J}}  e^{i e^\x \cdot \z + x_j- ix_j' +x_l + ix_l' } \F_{n,\sigma; j,l}(\x) d\x \\
   = & \sum_{ \m \in \mathbb{Z}^J}  \int_{ R_{0} \times \cdots\times  R_0}  e^{i e^\x \cdot \z + x_j- ix_j' +x_l + ix_l' } \F_{n,\sigma; j,l}(\x - 2 \pi i \m ) d\x .
   \end{split} \end{equation*}
   We substitute $ e^{x_h} = r_h$ and $x_h'= \theta_h $ for each $h = 1, \dots, J$. 
   Letting $\r = ( \log r_1  +i \theta_1  , \dots , \log r_J +  i \theta_J  ) $ and 
   $ R_+ =\{ (r,\theta) : r>0, \theta  \in [ 0, 2 \pi ] \}    $  we find that
   \begin{equation*} 
   I=  \sum_{ \m \in \mathbb{Z}^J}  \int_{ R_+^J }  e^{i  \r \cdot \z } r_j r_l e^{ -i\theta_j +i\theta_l } 
   \left(\prod_{h=1}^J r_h ^{-1}\right) \F_{n,\sigma; j,l}( \r - 2 \pi i \m ) d\r,
     \end{equation*}
  where $d\r = \prod_{h=1}^J dr_h d\theta_h $.  We carry out the summation and integrations sequentially over the triples
  $(m_1, r_1, \theta_1), (m_2, r_2, \theta_2),  \ldots (m_J, r_J, \theta_J)$.  
  Treating $(m_1, r_1, \theta_1)$ first, we see that we must estimate
      \begin{equation*}\begin{split}
     & \sum_{m_1 \in \mathbb{Z}} \int_0 ^{2\pi} \int_0 ^\infty  e^{i  r_1 (e^{i\theta_1}\cdot \overline{a}_1y  )}  
     \F ( \log r_1 + i  (\theta_1- 2 m_1 \pi ), \ldots)
    \frac{dr_1}{r_1^k} d\theta_1 \\
    = & \int_{-\infty} ^{\infty} \int_0 ^\infty  e^{ir_1| {a}_1 y|  \sin (\theta_1 -\alpha )}   
    \F ( \log r_1 + i  (\theta_1- 2 m_1 \pi ), \ldots)
    \frac{dr_1}{r_1^k} d\theta_1,    \end{split}\end{equation*}
    where $k=0$ or $1$ and $\alpha $ depends only on the argument of $\overline{a}_1y$. 
    Let $I_\epsilon = \bigcup_{n \in \mathbb{Z}} [ \alpha +
    n \pi - \epsilon, \alpha + n \pi + \epsilon]$ for $ 0<\epsilon < \pi$.
    Then repeated integration by parts with respect to $r_1$ over $\mathbb R\setminus I_\epsilon$ 
    using the majorant $K_q e^{-\lambda(\log^2 r_1 + \theta_1^2)}$ for the 
    partial derivatives of $\F$ of order $\leqslant q$ gives    
   \begin{equation*}
   \begin{split}
   \int_{\mathbb{R} \setminus I_\epsilon}  \int_0 ^\infty  &e^{ir_1|a_1 y|  \sin (\theta_1 -\alpha)} 
    \F ( \log r_1 + i  (\theta_1- 2 m_1 \pi ), \ldots)
    \frac{dr_1}{r_1^k}  d\theta_1  \\
       &\ll \int_{\mathbb{R} \setminus I_\epsilon} |y \sin (\theta_1 - \alpha) 
       |^{-q}e^{-\lambda \theta_1^2} d\theta_1\\
      & \ll |\epsilon y |^{-q}  . 
       \end{split}
      \end{equation*}
    For $\theta \in I_\epsilon$, we change the order of integration and integrate by parts over 
    $\theta_1$ once and see that
    $$ \int_0 ^\infty \sum_{m \in \mathbb{Z}} 
    \int_{\alpha + m\pi - \epsilon}^{\alpha + m \pi + \epsilon} e^{ir_1|a_1 y|  \sin (\theta_1 -\alpha)} 
    \F ( \log r_1 + i  (\theta_1- 2 m_1 \pi ), \ldots)d\theta_1  \frac{dr_1}{r_1^k}  \ll |y|^{-1} .$$
    Thus, we have
    \begin{equation*}
   \sum_{m_1 \in \mathbb{Z}} \int_0 ^{2\pi} \int_0 ^\infty  e^{i  r_1 (e^{i\theta_1}\cdot \overline{a}_1y  )}  
     \F ( \log r_1 + i  (\theta_1- 2 m_1 \pi ), \ldots)
    \frac{dr_1}{r_1^k} d\theta_1 \ll |y|^{-1}.
    \end{equation*}
 We  continue  this process for the remaining triples $(m_i, r_i, \theta_i), \ i=2, \ldots, J,$ 
 and at each stage we get another factor of $|y|^{-1}$.
 In this way we   find that 
     \begin{equation*}
 I=   \int_{[0,1]^n} e^{i E_{n, \sigma}(\bth)\cdot y} \overline{ L'_n (\sigma, \chi_j ;\bth)} 
 L'_n ( \sigma, \chi_l; \bth) d\bth  \ll |y|^{-J}
    \end{equation*}
    as $|y| \rightarrow \infty$ for $ n \geqslant n_1 $.  Using  this in \eqref{eqn:3}, 
    we see that $\widehat{\nu_{n,\sigma}}(y)  \ll |y|^{-J} \ll |y|^{-3}$ as $|y| \rightarrow \infty$ for $n \geqslant n_1$. This completes the proof of Theorem \ref{thm:1}.

    We now turn to the proof of Theorem \ref{co:1}.
    \begin{proof}[Proof of Theorem \ref{co:1}]
    Recall that by equations (\ref{eqn:4}) and (\ref{eqn:5}) 
    $$ \lim_{T \rightarrow \infty} \frac{1}{T} N_E ( \sigma_1 ,     \sigma_2 ; 0,T) =  \int_{\sigma_1}^{\sigma_2}
    G_\sigma  d\sigma  $$
    for $1/2< \sigma_1 < \sigma_2 $ and  some function $G_\sigma $ continuous  for $\sigma>1/2$. 
    We  denote by $N_E^{line} (\sigma; T)$  the number of zeros $\rho$ of $E(s,Q)$ with  $\R \rho = \sigma $ and $ 0 < \I \rho <T$. Then we have
    $$  \int_{\sigma_0-\epsilon }^{\sigma_0+\epsilon} G_\sigma  d\sigma = \lim_{T \to \infty} \frac{1}{T}  N_E ( \sigma_0 - \epsilon ,  \sigma_0 + \epsilon ; 0,T) \geqslant  \limsup_{T \to \infty} \frac{1}{T}N_E ^{line} ( \sigma_0 ; T) \geqslant 0 $$
    for any $ \epsilon >0$. Taking $\epsilon \to 0+$, we  then see that 
    $$ \limsup_{T \rightarrow \infty}\frac1T N_E ^{line}( \sigma_0 ; T)=0.$$
 Thus, $N_E^{line} (\sigma_0 ; T) = o(T)$.
    \end{proof}

\section{Zeros of Epstein zeta functions in $\R s > 1 $ }

As we mentioned in \S 1, we expect that the constant $c$ in
Theorem~\ref{thm:1} is positive when
$1< \sigma_1 <\sigma_2 < \sigma(Q)$ for all positive definite binary
quadratic forms $Q$ with integer coefficients, and not just  for $Q_1$ and $Q_2$.
We are not able to prove  this, but Corollaries \ref{cor:1} and \ref{cor:2} below 
provide partial results in this direction. These results will be consequences of Theorem~\ref{thm:1} 
and the two theorems below from the theory of almost periodic functions.
 The first theorem says that if an almost periodic function has a zero $\rho$, 
 then there are infinitely many zeros in 
 any narrow strip containing the zero. See the comments by Montgomery to 
 Davenport's collected works \cite{D}, Vol IV, p. 1780.

    \begin{thrm}\label{offzero:1}
        Let $ f(s) = \sum_{n=1}^\infty a_n n^{-s} $ for $\R s
        >1 $. Suppose that $f(\rho) = 0 $
        where $\rho = \beta +i \gamma $ and $\beta >1$. Let $
        0< \delta < \beta-1$ be fixed. Then  there are at least $c_\delta T$ zeros of $f(s)$
        in the region $ | \R s - \beta | < \delta $,\ $ 0< \I s <T$, where
        $c_\delta >0$ is a constant independent of $T$.
    \end{thrm}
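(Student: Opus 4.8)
The plan is to exploit the almost periodicity of $f$ in the half-plane $\R s>1$ together with Rouch\'e's theorem; we may assume $f\not\equiv 0$, since otherwise the conclusion is trivial. Because $\sum_{n}a_n n^{-s}$ converges absolutely for $\R s>1$, the function $f$ is uniformly almost periodic in every half-plane $\R s\geq \sigma_0$ with $\sigma_0>1$: for each $\epsilon>0$ the set of $\epsilon$-translation numbers
$$ E(\epsilon)=\Big\{\tau\in\mathbb{R}:\ \sup_{\R s\geq \sigma_0}\,|f(s+i\tau)-f(s)|<\epsilon\Big\} $$
is relatively dense, i.e.\ there is a length $\ell=\ell(\epsilon)$ such that every interval of length $\ell$ meets $E(\epsilon)$. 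This is the standard consequence of splitting the Dirichlet series into a finite head $\sum_{n\leq N}a_n n^{-s}$ and an absolutely small tail, and applying Dirichlet's theorem on simultaneous Diophantine approximation to the numbers $\bigl(\tfrac{\tau}{2\pi}\log n\bigr)_{n\leq N}$ to make the head nearly invariant under $s\mapsto s+i\tau$.

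First I would fix the geometry. Since $\delta<\beta-1$ and the zeros of the nonzero holomorphic function $f$ are isolated, I can choose $r$ with $0<r<\delta$ so small that the closed disk $\overline D=\{|s-\rho|\leq r\}$ lies in the strip $|\R s-\beta|<\delta$ (which is contained in $\R s>1$) and that $\rho$ is the only zero of $f$ in $\overline D$. Set $\sigma_0=\beta-r>1$, let $m=\min_{|s-\rho|=r}|f(s)|>0$, and take $\epsilon=m$. Then for any $\tau\in E(\epsilon)$ we have $|f(s+i\tau)-f(s)|<m\leq|f(s)|$ on the circle $|s-\rho|=r$, so by Rouch\'e's theorem $f(s+i\tau)$ has a zero in $\overline D$; equivalently, $f$ has a zero $\rho_\tau$ with $|\rho_\tau-(\rho+i\tau)|<r$, so in particular $|\R\rho_\tau-\beta|<r<\delta$.

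The final step is the count. Using that $E(\epsilon)$ is relatively dense with gap $\ell$, I would pick, for each integer $j\geq 0$ with $[\,j(\ell+2r+1),\,j(\ell+2r+1)+\ell\,]\subset(r-\gamma,\,T-\gamma-r)$, one translation number $\tau_j$ in that interval. Consecutive such intervals are separated by gaps $>2r$, so the $\tau_j$ satisfy $\tau_{j+1}-\tau_j>2r$, and the number of admissible $j$ is $\gg T$ for $T$ large, with an implied constant $c_\delta\asymp \ell(\epsilon)^{-1}$. The corresponding zeros $\rho_{\tau_j}$ of $f$ all lie in $|\R s-\beta|<\delta$, and their imaginary parts lie in the pairwise disjoint subintervals $(\gamma+\tau_j-r,\gamma+\tau_j+r)\subset(0,T)$, so these zeros are distinct. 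Hence $f$ has at least $c_\delta T$ zeros in the region $|\R s-\beta|<\delta$, $0<\I s<T$, where $c_\delta>0$ depends only on $\delta$, $\rho$, and (through $\ell(\epsilon)$, $r$, $m$) on $f$, but not on $T$.

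The only real content of the argument is the almost-periodicity input — the relative density of $E(\epsilon)$ — which packages Dirichlet's (or Kronecker's) approximation theorem; the remaining steps (Rouch\'e's theorem and the separation-and-counting bookkeeping) are routine. The one point requiring a little care is ensuring that the zeros produced by different translation numbers are genuinely distinct, which is exactly why the $\tau_j$ are chosen more than $2r$ apart.
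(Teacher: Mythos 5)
Your proof is correct and follows essentially the same route as the paper's: both produce a relatively dense set of near-translation numbers via Dirichlet's approximation theorem (almost periodicity of $f$ in $\R s>1$), transfer the zero $\rho$ by Rouch\'e's theorem on a small circle around $\rho$, and count the resulting zeros. The only difference is cosmetic bookkeeping — you space the translation numbers by more than $2r$ to make disjointness of the disks explicit, whereas the paper spaces them by at least $1$ and implicitly takes $r$ small enough for the same conclusion.
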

    \begin{proof}
    Let $r$ be so small  that $0<r<\delta$ and that $\rho$ is the only zero 
    of $f$ in the disk $|s-\rho|\leq  r$.
    Also let $ \epsilon = \inf_{\theta} | f( \rho + re^{i\theta})|$.  
        By an extension of Dirichlet's theorem on Diophantine approximation (e.g., see $\S 8.2 $ of \cite{Ti}), we can find $m$ values of $t_0$ in the interval $[1, mT_\epsilon ]$     satisfying $| f(s+it_0 )- f(s)| < \epsilon$ for $ |\R s - \beta | < \delta$, with any two solutions differing by at least $1$. Thus, we have $|f(s+it_0 )-f(s) | < |f(s)| $ for $|s-\rho
    | = r$. By Rouch\'{e}'s theorem, $f(s)$ has at least one zero in $
    | s - \rho - it_0 |< r $.  Hence, we have at least
    $ m $ zeros in $ | \R s - \beta | < \delta $ and $ 1< \I s
    < m T_\epsilon$.
    \end{proof}

Assume the same hypothesis as in Theorem \ref{vo:1} and consider the constant $c= c(\sigma_1 , \sigma_2 , Q )$ in Theorem \ref{thm:1}. 
By Theorem \ref{thm:1}, Theorem~\ref{offzero:1}, and Davenport and Heilbronn's theorem 
(see below Theorem~\ref{vo:1}), we deduce that $\sigma(Q) > 1 $ and $ c( 1, \sigma, Q) >0$ for any $ \sigma >1 $. The definition of $\sigma(Q)$ and Theorem \ref{offzero:1} then implies  that 
$ c( \sigma, \sigma(Q) , Q) >0$ for any $ 1/2 < \sigma < \sigma(Q)$. 

Next let $ 1 < \sigma_1 < \sigma_2 < \sigma(Q)$ and suppose $c(\sigma_1, \sigma_2,Q) = 0$. If there exists a zero $\rho$ in the vertical strip $ \sigma_1 < \R s < \sigma_2 $, then Theorem \ref{offzero:1} implies 
that $ c(\sigma_1, \sigma_2 ,Q)>0$, and this contradicts   our assumption. Hence $E(s,Q)$ has no zeros in the vertical strip $ \sigma_1 < \R s < \sigma_2$. 

We summarize these results in the following corollary.
    \begin{co}\label{cor:1}
    Assume the same hypothesis as in Theorem \ref{vo:1} and let $c=c(\sigma_1, \sigma_2, Q)$ be
    the constant in Theorem
    \ref{thm:1}.  Then for any $ \sigma> 1$ we have $c(1, \sigma , Q)> 0 $,  and
    for any $1/2 < \sigma < \sigma(Q)$ we have 
    $ c(\sigma, \sigma(Q), Q) >0$.
   Moreover, if $c(\sigma_1 , \sigma_2, Q) =0$ for some $ 1 < \sigma_1 <
    \sigma_2 < \sigma(Q)$, then $ E(s, Q) \neq 0 $ for $\sigma_1 < \R
    s< \sigma_2$.
    \end{co}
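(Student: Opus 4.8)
The plan is to deduce all three assertions as fairly direct consequences of Theorem~\ref{thm:1}, Theorem~\ref{offzero:1}, the theorem of Davenport and Heilbronn, and Corollary~\ref{co:1}. The one observation underpinning everything is that in the half plane $\R s>1$ the Epstein zeta function coincides with its ordinary Dirichlet series $E(s,Q)=\sum_{k\geqslant 1}r_Q(k)\,k^{-s}$, where $r_Q(k)$ is the number of representations of $k$ by $Q$; in particular Theorem~\ref{offzero:1} is applicable to $f(s)=E(s,Q)$, so a single zero of $E(s,Q)$ to the right of the line $\R s=1$ already produces $\gg T$ zeros, with $0<\I s<T$, inside any fixed narrow vertical strip containing it.

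First I would record that $1<\sigma(Q)<\infty$: the lower bound is immediate because Davenport and Heilbronn's theorem (valid since $h(D)>1$) supplies a zero $\rho_0$ with $\R\rho_0>1$, so $\sigma(Q)\geqslant\R\rho_0>1$, while the upper bound holds since the smallest Dirichlet coefficient of $E(s,Q)$ dominates all the others once $\R s$ is large, preventing zeros far to the right. The first claim of the corollary, $c(1,\sigma,Q)>0$ for every $\sigma>1$, is then nothing but Theorem~\ref{thm:1} applied with $\sigma_1=1$ and $\sigma_2=\sigma$, for there the hypothesis ``$\sigma_1\leqslant1$'' is satisfied.

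For the second claim, $c(\sigma,\sigma(Q),Q)>0$ whenever $1/2<\sigma<\sigma(Q)$: if $1/2<\sigma\leqslant1$ the strip $\sigma<\R s<\sigma(Q)$ contains $1<\R s<\sigma(Q)$, so by monotonicity of the zero count together with the first claim we get $c(\sigma,\sigma(Q),Q)\geqslant c(1,\sigma(Q),Q)>0$. Thus I may assume $1<\sigma<\sigma(Q)$. Since $\sigma(Q)$ is a supremum there is a zero $\rho_1$ with $\R\rho_1>\sigma$, and necessarily $1<\sigma<\R\rho_1\leqslant\sigma(Q)$. If $\R\rho_1<\sigma(Q)$, I apply Theorem~\ref{offzero:1} at $\rho_1$ with any $\delta<\min\{\R\rho_1-1,\ \R\rho_1-\sigma,\ \sigma(Q)-\R\rho_1\}$; this yields $\gg T$ zeros in $|\R s-\R\rho_1|<\delta$, $0<\I s<T$, all lying in $\sigma<\R s<\sigma(Q)$, and comparing with the asymptotic count of Theorem~\ref{thm:1} forces $c(\sigma,\sigma(Q),Q)>0$. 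The remaining possibility is that the supremum $\sigma(Q)$ is actually attained by some zero; there I apply Theorem~\ref{offzero:1} at that zero with $\delta<\min\{\sigma(Q)-1,\ \sigma(Q)-\sigma\}$, obtaining $\gg T$ zeros with $\R s\leqslant\sigma(Q)$, since no zero lies to the right of $\sigma(Q)$. By Corollary~\ref{co:1} at most $o(T)$ of these have $\R s=\sigma(Q)$, so $\gg T$ of them lie in $\sigma<\sigma(Q)-\delta<\R s<\sigma(Q)$, and again $c(\sigma,\sigma(Q),Q)>0$. I expect this edge case to be the only genuine, and only minor, difficulty in the proof: one has to exclude the scenario in which all the extra zeros supplied by Theorem~\ref{offzero:1} sit exactly on the line $\R s=\sigma(Q)$, and Corollary~\ref{co:1} is precisely the tool that excludes it.

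Finally, for the contrapositive of the third claim, suppose $1<\sigma_1<\sigma_2<\sigma(Q)$ and that $E(\rho,Q)=0$ for some $\rho$ with $\sigma_1<\R\rho<\sigma_2$. Then $\R\rho>\sigma_1>1$, so Theorem~\ref{offzero:1}, applied with any $\delta<\min\{\R\rho-\sigma_1,\ \sigma_2-\R\rho\}$ (for which $\delta<\R\rho-1$ holds automatically), yields $\gg T$ zeros of $E(s,Q)$ in $\sigma_1<\R s<\sigma_2$, $0<\I s<T$. Since Theorem~\ref{thm:1} asserts that the number of zeros in this strip equals $c(\sigma_1,\sigma_2,Q)\,T+o(T)$, it follows that $c(\sigma_1,\sigma_2,Q)>0$, contradicting the hypothesis $c(\sigma_1,\sigma_2,Q)=0$. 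Hence $E(s,Q)$ has no zero with $\sigma_1<\R s<\sigma_2$, which completes the argument.
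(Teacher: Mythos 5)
Your proof is correct and follows the paper's own route through Theorem~\ref{thm:1}, Theorem~\ref{offzero:1}, and the Davenport--Heilbronn theorem. You go slightly further than the paper's very terse argument in the second assertion by explicitly invoking Theorem~\ref{co:1} to exclude the degenerate case in which $\sigma(Q)$ is attained by a zero and all of the $\gg T$ zeros produced by Theorem~\ref{offzero:1} near it might lie exactly on the line $\R s = \sigma(Q)$; this extra care is welcome, as the paper's one-line claim glosses over precisely that point.
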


The theory of mean motions explains the zeros of almost periodic functions by using the Jensen function. We restate Theorem 31 of \cite{JT} as follows.

    \begin{thrm}\label{thm:mm}
       Let  $ f(s) = \sum_{n=n_0}^\infty  a_n  n^{-s} $ with $a_{n_0} \neq 0$ and with the  abscissa 
       of uniform convergence $\alpha$. Then the Jensen function $ \varphi(\sigma)$ possesses on every half-line $\sigma > \alpha_1   > \alpha $ only a finite number of linearity intervals and a finite number of points of non-differentiability. The values of $\varphi'(\sigma)$ in the linearity intervals belong to the set of numbers $ - \log n $, $n \geqslant n_0$. For $ \sigma$ sufficiently large, we have
        $$ \varphi(\sigma) = - ( \log n_0 ) \sigma + \log | a_{n_0}|.$$
        For any $ \sigma > \alpha$ the mean motions $ c^- ( \sigma) $ and $c^+ (\sigma)$ both exist and are determined by $ c^- ( \sigma ) = \varphi '( \sigma  - )$ and $ c^+ ( \sigma ) = \varphi' ( \sigma+ )$.
       In any strip $ ( \sigma_1 , \sigma_2 )$, where $ \alpha< \sigma_1 < \sigma_2 < \infty$, the relative frequency $H( \sigma_1 , \sigma_2 ) $ of zeros exists and is determined by
        $$ H( \sigma_1 , \sigma_2 ) = \frac{1}{2\pi } ( \varphi'( \sigma_2 - ) - \varphi' ( \sigma _1 +)).$$
    \end{thrm}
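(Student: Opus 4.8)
The statement is Theorem 31 of \cite{JT}, so in practice one simply cites it; were I to reconstruct the argument I would build it from three ingredients --- the convexity of the Jensen function, its explicit shape far to the right, and the approximation of $f$ by Dirichlet polynomials, each of which has a genuinely piecewise linear Jensen function with the right slopes.

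First I would determine $\varphi$ for large $\sigma$ by factoring out the leading term: $f(s) = a_{n_0} n_0^{-s}\bigl(1+g(s)\bigr)$ with $g(s) = \sum_{n>n_0}(a_n/a_{n_0})(n_0/n)^s$. Since $f$ has finite abscissa of uniform convergence, $\sup_t|g(\sigma+it)| < 1$ for $\sigma$ large, so $\log|f(\sigma+it)| = \log|a_{n_0}| - \sigma\log n_0 + \R\log\bigl(1+g(\sigma+it)\bigr)$ with $\log(1+g) = \sum_{k\ge1}\frac{(-1)^{k+1}}{k}g^k$ converging uniformly in $t$. As a function of $t$, each power $g^k$ is a uniformly almost periodic function whose Bohr--Fourier frequencies are sums of $k$ of the negative numbers $\log(n_0/n)$, hence are nonzero; therefore $g^k$ has mean value $0$, and integrating term by term gives $\varphi(\sigma) = \log|a_{n_0}| - \sigma\log n_0$ for all large $\sigma$. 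This is the third assertion, and it also shows that $f$ is zero-free and $\varphi' = -\log n_0$ there.

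For the structure on $(\alpha,\infty)$ I would use the truncations $f_N(s) = \sum_{n=n_0}^N a_n n^{-s}$, which converge to $f$ uniformly on every half-plane $\R s \ge \alpha_1 > \alpha$; then by Jessen's theorem (\cite{BJ}, cf.\ Theorem~\ref{bjt:1}) $\varphi_{f_N}\to\varphi_f$ uniformly on $[\alpha_1,\infty)$ and $\varphi_f$ is convex. For a single Dirichlet polynomial one writes $f_N(\sigma+it) = P_\sigma(t\bla/2\pi)$, a Laurent polynomial in the $e^{2\pi i\theta_j}$ on a torus, so that $\varphi_{f_N}(\sigma)$ is the mean of $\log|P_\sigma|$ over the closed orbit; it is classical (and contained in \cite{JT}) that this quantity is piecewise linear in $\sigma$ with every slope equal to $-\log n$ for some $n$ with $a_n\neq0$. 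Because $\varphi_f$ is convex with eventual slope $-\log n_0$, on any half-line $\sigma > \alpha_1$ its slope lies in a bounded interval, which --- together with the Jessen--Tornehave analysis of how the limit can fail to inherit piecewise linearity --- yields only finitely many linearity intervals and finitely many non-differentiability points on $(\alpha_1,\infty)$, with the linearity-interval slopes among the $-\log n$. The mean-motion identities $c^{-}(\sigma) = \varphi'(\sigma-)$ and $c^{+}(\sigma) = \varphi'(\sigma+)$ are the standard link between the mean motion of $\arg f$ along a vertical line and the one-sided derivatives of the Jensen function, again recorded in \cite{JT}.

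Finally, for the relative frequency of zeros I would start from the two-sided estimate \eqref{kre:1}, valid by \cite{BJ}, namely $\frac{1}{2\pi}(\varphi'(\sigma_2-)-\varphi'(\sigma_1+)) \le \underline H(\sigma_1,\sigma_2) \le \overline H(\sigma_1,\sigma_2) \le \frac{1}{2\pi}(\varphi'(\sigma_2+)-\varphi'(\sigma_1-))$. This immediately gives the claimed formula whenever $\sigma_1$ and $\sigma_2$ are points of differentiability of $\varphi$, which excludes only the finitely many corners; and near a corner $\sigma^*$ the zeros accumulate on the line $\R s = \sigma^*$ with density exactly the jump $\frac{1}{2\pi}(\varphi'(\sigma^*+)-\varphi'(\sigma^*-))$, so passing to the \emph{open} strip $(\sigma_1,\sigma_2)$ strips off precisely the two boundary contributions and leaves $\frac{1}{2\pi}(\varphi'(\sigma_2-)-\varphi'(\sigma_1+))$. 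The main obstacle, and the real content of \cite{JT}, is the finiteness statement: the frequencies $\log n$ are in general $\mathbb{Q}$-linearly independent, so $f_N$ does not reduce to a polynomial in one variable, and while each $\varphi_{f_N}$ is piecewise linear the limit $\varphi_f$ need not be, so the geometry of $\varphi_f$ on a half-line must be controlled directly rather than by passing piecewise linearity to the limit.
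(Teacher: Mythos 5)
Your proposal matches the paper exactly in its treatment of this statement: the result is lifted verbatim from Theorem~31 of Jessen--Tornehave~\cite{JT} and the paper offers no proof beyond that citation, so citing [JT] is all that is required. Your reconstruction sketch is a faithful account of the main ingredients --- the explicit form of $\varphi$ far to the right via $f = a_{n_0}n_0^{-s}(1+g)$ and the vanishing mean value of $g^k$, the convexity of $\varphi$ and the uniform convergence $\varphi_{f_N}\to\varphi_f$ from the truncations, and the two-sided bound \eqref{kre:1} for the relative frequency of zeros --- and you correctly isolate the finiteness of linearity intervals and corners on a half-line as the genuinely nontrivial content of [JT], which your sketch (rightly) does not reprove. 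One small inaccuracy in the closing aside: the numbers $\log n$ are not $\mathbb{Q}$-linearly independent (e.g.\ $\log 4 = 2\log 2$); only the $\log p$ over primes are, which is precisely why each truncation $f_N$ is a Laurent polynomial on a torus of dimension equal to the number of primes up to $N$ rather than a polynomial in one variable. This slip is peripheral and does not affect the substance of your argument.
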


As a consequence of Theorem \ref{thm:1} and \ref{thm:mm}  we have the following corollary.
\begin{co}\label{cor:2}
Assume the same hypothesis as in Theorem \ref{vo:1}. There are only finitely many zero free vertical strips for $E(s,Q)$.
\end{co}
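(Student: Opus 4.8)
The plan is to show that every zero-free vertical strip meeting $\R s>1/2$ must in fact lie in $\R s>1$, to identify the maximal such strips with the linearity intervals of the Jensen function $\varphi_E$, and then to bound the number of those linearity intervals by combining the mean motion theory of Theorem~\ref{thm:mm} with the convexity of $\varphi_E$ across the line $\R s=1$; the region $\R s<1/2$ is then handled by the functional equation $s\mapsto 1-s$ (up to the trivial zeros at the negative integers). So suppose $\sigma_1<\sigma_2$, that $E(s,Q)$ does not vanish for $\sigma_1<\R s<\sigma_2$, and (as we may) that $\sigma_2>1/2$. If $\sigma_1\leqslant 1$, then applying Theorem~\ref{thm:1} to the strip $(\sigma_1,\sigma_2)$ itself when $1/2<\sigma_1\leqslant 1$, and to a sub-strip $(\sigma_1',\sigma_2)$ with $1/2<\sigma_1'\leqslant 1$ when $\sigma_1\leqslant 1/2$, the clause ``$\sigma_1\leqslant 1\Rightarrow c>0$'' of that theorem shows that the strip contains $\gg T$ zeros, a contradiction. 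Hence $\sigma_1>1$, so every zero-free vertical strip lies in $\R s>1$, where $E(s,Q)$ is given by an absolutely convergent Dirichlet series; its coefficients are non-negative and it has a singularity at $s=1$, so its abscissa of uniform convergence is $\alpha=1$ and Theorem~\ref{thm:mm} applies to it on every half-line $\sigma>\alpha_1>1$.

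Next I would match these strips with the linearity intervals of $\varphi_E$. For $1<\sigma_1<\sigma_2$, Theorem~\ref{thm:mm} gives $\frac{1}{2\pi}(\varphi'_E(\sigma_2-)-\varphi'_E(\sigma_1+))$ for the relative frequency of zeros in $\sigma_1<\R s<\sigma_2$, so if that strip is zero-free then $\varphi'_E(\sigma_2-)=\varphi'_E(\sigma_1+)$ and $\varphi_E$ is affine on $(\sigma_1,\sigma_2)$; conversely, by Corollary~\ref{cor:1} (which rests on Theorem~\ref{offzero:1}), $c(\sigma_1,\sigma_2,Q)=0$ with $1<\sigma_1<\sigma_2<\sigma(Q)$ implies $E(s,Q)\neq 0$ throughout that strip. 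Consequently every maximal zero-free vertical strip in $\R s>1$ is either $(\sigma(Q),\infty)$ or is a maximal linearity interval of $\varphi_E$ contained in $(1,\sigma(Q)]$; the only line that can carry zeros inside such an interval is $\R s=\sigma(Q)$, and it carries only $o(T)$ of them by Theorem~\ref{co:1}, so the number of maximal zero-free strips exceeds the number of maximal linearity intervals of $\varphi_E$ in $(1,\infty)$ by at most one. It therefore suffices to show that $\varphi_E$ has only finitely many maximal linearity intervals in $(1,\infty)$.

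By Theorem~\ref{thm:mm} there are only finitely many of them on each half-line $\sigma>\alpha_1>1$, so if there were infinitely many in $(1,\infty)$ they would have to accumulate at $\sigma=1$. Since $\varphi'_E$ is nondecreasing, two neighbouring maximal linearity intervals cannot share a slope (they would merge into one), and by Theorem~\ref{thm:mm} every slope of a linearity interval lies in the set $\{-\log n:n\geqslant n_0\}$, which has no finite accumulation point. On the other hand, the proof of Theorem~\ref{thm:1} shows that $\varphi_E$ extends to a convex (indeed $C^1$) function on the open interval $(1/2,\infty)$, so $\varphi'_E(1+)$ is finite; hence the slopes of the linearity intervals accumulating at $\sigma=1$ would be forced to converge to the finite number $\varphi'_E(1+)$, which is impossible for a sequence of distinct elements of $\{-\log n\}$. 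This contradiction finishes the proof.

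The step I expect to be the main obstacle is this last one: ruling out an accumulation of zero-free strips at the edge $\R s=1$ of the region of almost periodicity, where Theorem~\ref{thm:mm} by itself yields finiteness only on half-lines $\sigma>\alpha_1>1$. The resolution relies on using simultaneously the arithmetic constraint that the slopes of linearity intervals have the shape $-\log n$ and the analytic input --- one of the main results of the present paper --- that $\varphi_E$ continues to a convex function past $\R s=1$, so that $\varphi'_E(1+)$ is finite. The remaining matters --- that neighbouring maximal linearity intervals cannot share a slope, and that the maximal zero-free strips essentially coincide with the maximal linearity intervals --- are routine once these two ingredients are available.
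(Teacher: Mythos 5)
Your proposal follows the same overall route as the paper---reduce to $\R s>1/2$ by the functional equation, use the positivity clause of Theorem~\ref{thm:1} (the paper uses Theorem~\ref{vo:1}) to rule out zero-free strips meeting $1/2<\R s\leqslant 1$, and then identify maximal zero-free strips in $\R s>1$ with maximal linearity intervals of $\varphi_E$ via Theorem~\ref{thm:mm}---but you have correctly spotted and repaired a point the paper glosses over. The paper asserts, citing Theorem~\ref{thm:mm}, that $\varphi_E$ ``has only a finite number of linearity intervals,'' whereas Theorem~\ref{thm:mm} (Theorem 31 of Jessen--Tornehave) guarantees finiteness only on each half-line $\sigma>\alpha_1>\alpha=1$, and a priori leaves open the possibility of linearity intervals accumulating at $\sigma=1$. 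Your closing argument supplies exactly what is needed to exclude this: distinct maximal linearity intervals of a convex function must have distinct slopes, those slopes lie in the discrete set $\{-\log n\}$ by Theorem~\ref{thm:mm}, yet $\varphi'_E$ is continuous on $(1/2,\infty)$ by the paper's main analytic result (Theorems~\ref{bjt:2}--\ref{bjt:3} and their $J\geqslant 3$ analogue give $\varphi_E\in C^2$ there), so $\varphi'_E(1+)$ is finite and the slopes cannot accumulate. This combination of the arithmetic constraint on slopes with the analytic continuation of $\varphi_E$ past $\sigma=1$ is the right fix and is in the spirit of the rest of the paper. The auxiliary remarks in your write-up about $(\sigma(Q),\infty)$, the converse direction via Corollary~\ref{cor:1}, and Theorem~\ref{co:1} on the line $\R s=\sigma(Q)$ are not strictly necessary for finiteness---disjoint maximal zero-free strips already sit inside disjoint linearity intervals---but they are harmless.
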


\begin{proof}
By the functional equation it suffices to show that there are only finitely many zero free vertical strips 
in $\sigma>1/2$.
  First, by Theorem \ref{vo:1}   any  vertical strip $\sigma_1\leq \R s \leq \sigma_2$ with 
   $ 1/2 <\sigma_1<\sigma_2\leq 1$ has at least $cT$ zeros with $c>0$. Next consider strips in $ \R s > 1$. Theorem \ref{thm:mm} says that the number of zeros in the region 
   $ \sigma_1 < \R s <\sigma_2 $, $ 0<\I s < T$ is 
  $$ \frac{T}{2 \pi} ( \varphi'( \sigma_2 -) - \varphi'(\sigma_1 +)) + o(T), $$
 and that $\varphi( \sigma)$ has only a finite number of linearity intervals. On these linearity intervals  $\varphi'(\sigma)$ is constant and $\varphi'(\sigma_2 - ) - \varphi'(\sigma_1 + ) = 0 $. Thus, by Corollary \ref{cor:1}, $E(s,Q)$ has no zeros in the vertical strip corresponding to these linearity intervals. If $[\sigma_1, \sigma_2]$ is not in any of these linearity intervals, then $\varphi'(\sigma_2 - ) - \varphi'(\sigma_1 + ) >0$ since $ \varphi ( \sigma)$ is convex. This completes the proof.
\end{proof}

\begin{ack}
This paper is a part of my Ph.D. thesis at Yonsei University. I would like to express my gratitude to my advisor Professor Haseo Ki for his unfailing support. I really appreciate Professor Steve Gonek for helping me to revise this paper. I thank Professor Enrico Bombieri for his kindness and many invaluable suggestions and corrections to this paper, Professor Roger Heath-Brown for showing me his interest and suggestions for this paper, and the reviewer for his or her careful reading and many comments.
\end{ack}


\begin{thebibliography}{15}

%\bibitem{B} A.S. Besicovitch, \emph{Almost Periodic Functions}, Dover, New York, 1954.

%\bibitem{BJ1} H. Bohr and B. Jessen, \emph{On the distribution of the values of the Riemann zeta-function}, Amer. J. Math. {\bf 58} (1936), 34--44.

\bibitem{BH} E. Bombieri and D. Hejhal, \emph{On the distribution of zeros of linear combinations of Euler products}, Duke Math. J. {\bf 80} (1995), 821--862.

\bibitem{BM} E. Bombieri and J. Mueller, \emph{On the zeros of certain Epstein zeta functions}, Forum Math. {\bf 20} (2008), 359--385.

\bibitem{BJ} V. Borchsenius and B. Jessen, \emph{Mean Motions and values of the Riemann zeta function}, Acta Math. {\bf 80} (1948), 97--166.

\bibitem{D} H. Davenport, \emph{The collected works of Harold Davenport. Volumes I-IV}, xii+1910 pp. Academic Press, London -- New York -- San Francisco, 1977.

\bibitem{DH} H. Davenport and H. Heilbronn, \emph{On the zeros of certain Dirichlet series}, J. London Math. Soc. {\bf 11} (1936), 181--185, 307--312.

\bibitem{H1} D. Hejhal, \emph{On a result of Selberg concerning zeros of linear combinations of L-functions}, Int. Math. Res. Not. (2000), 551--577.

\bibitem{H2} D. Hejhal, \emph{On the horizontal distribution of zeros of linear combinations of Euler products}, C. R. Acad. Sci. Paris, Ser. I {\bf 338} (2004), 755--758.

%\bibitem{IK} H. Iwaniec and E. Kowalski, \emph{Analytic Number Theory}, Amer. Math. Soc. Colloquium Publications, vol. 53, 2004.

\bibitem{JT} B. Jessen and H. Tornehave, \emph{Mean motions and zeros of almost periodic functions}, Acta Math. {\bf 77} (1945), 137--279.

%\bibitem{JW} B. Jessen and A. Wintner, \emph{Distribution functions and the Riemann zeta funtion}, Trans. Amer. Math. Soc. {\bf 38} (1935), 48--88.

\bibitem{KV} A.A. Karatsuba and S.M. Voronin, \emph{The Riemann Zeta-Function}, Berlin, New York: Walter de Gruyter, 1992.

\bibitem{K1} H. Ki, \emph{On the zeros of sums of the Riemann zeta function}, J. Number Theory {\bf 128} (2008), 2704--2755.

%\bibitem{La} S. Lang, \emph{Algebraic Number Theory}, 2nd ed., Springer, 1994.

%\bibitem{Lav} A.F. Lavrik, \emph{An approximate functional equation for the zeta-function of Hecke in an imaginary quadratic field}, Mat. Zametki {\bf 2} (1967), 475--482 or Math. Notes {\bf 2} (1967), 779--783.

%\bibitem{Mo} H. Montgomery, \emph{Multiplicative number theory I: Classical theory}, Cambridge University Press, Cambridge, 2007.

%\bibitem{Na} W. Narkiewicz, \emph{Elementary and analytic theory of algebraic numbers}, 3rd ed., Springer, 2004.

\bibitem{Se} A. Selberg, ``Old and new conjectures and results about a class of Dirichlet series" in \emph{Collected Papers, Vol. 2}, Springer, Berlin, 1991, pp. 47--63.

\bibitem{Ti} E.C. Titchmarsh, \emph{The Theory of the Riemann Zeta-function}, 2nd ed., revised by D. R. Heath-Brown, Oxford University Press, Oxford, 1986.

% \bibitem{P} De la Vall\'{e}e-Poussin, \emph{Recherches analtiques sur la th\'{e}orie des nombres premiers (troisi\'{e}me partie)}, Annales de la Soci\'{e}t\'{e} de Bruxelles, {\bf 21A} (1897), 1--13.

\bibitem{V} S.M. Voronin, \emph{The zeros of zeta-functions of quadratic forms. (Russian)}, Trudy Mat. Inst. Steklov. {\bf 142} (1976), 135--147.

\end{thebibliography}
\end{document}